\newtheorem{theorem}{Theorem}[section]
\newtheorem{lemma}[theorem]{Lemma}
\newtheorem{proposition}[theorem]{Proposition}
\newtheorem{corollary}[theorem]{Corollary}
\theoremstyle{definition}
\newtheorem{definition}[theorem]{Definition}
\newtheorem{remark}[theorem]{Remark}
\def\gor#1{\widetilde{#1}}
\def\tech#1{\widehat{#1}}
\def\bar#1{\overline{#1}}
\def\h(#1,#2){\mbox{Hom}\left(#1,#2\right)}
\def\t(#1,#2){\mbox{Tor}\left(#1,#2\right)}
\def\e(#1,#2){\mbox{Ext}\left(#1,#2\right)}
\def\CP{\mathbb{C}\mathbf{P}}
\def\RP{\mathbb{R}\mathbf{P}}
\def\FP{\mathbb{F}\mathbf{P}}
\def\C{\mathbb{C}}
\def\F{\mathbb{F}}
\def\N{\mathbb{N}}
\def\R{\mathbb{R}}
\def\X{\mathbb{X}}
\def\Z{\mathbb{Z}}
\def\uu{\mathbf{u}}
\def\vv{\mathbf{v}}
\def\ww{\mathbf{w}}
\def\xx{\mathbf{x}}
\def\yy{\mathbf{y}}
\def\XX{\mathbf{X}}
\def\YY{\mathbf{Y}}
\def\<{\langle}
\def\>{\rangle}
\newcommand{\Mat}[1]{{\begin{bmatrix}#1\end{bmatrix}}}
\newcommand{\ppart}[1]{\left|#1 \right|_{+}}
\DeclareMathOperator*{\argmin}{\arg\!\min\;}
\DeclareMathOperator*{\argmax}{\arg\!\max\;}
\title[Projective Coordinates]{Multiscale Projective Coordinates
via Persistent Cohomology of Sparse  Filtrations\thanks{This work was partially supported by the NSF under grant DMS-1622301 and  DARPA under grant HR0011-16-2-003}
}
\author[Jose Perea]{Jose A. Perea}
\address{Department of Computational Mathematics, Science \& Engineering, Department of Mathematics,
         Michigan State University, East Lansing,
         MI, USA.}
\email{joperea@math.msu.edu}
\subjclass[2010]{Primary 55R99, 55N99, 68W05; Secondary 55U99}
\keywords{Persistent cohomology, Characteristic classes, Line bundle,  Classifying map, Projective space}
\begin{document}

\bibliographystyle{abbrvnat}
\begin{abstract}
We present in this paper a framework
which leverages the underlying topology of a data set,
in order to produce appropriate coordinate representations.
In particular, we show how to construct maps to
real and complex projective spaces, given appropriate
persistent cohomology classes.
An initial map is obtained in two steps: First, the persistent cohomology of a sparse filtration is used to compute systems of
 transition functions for  (real and complex) line
bundles over neighborhoods of the data.
Next, the transition functions are used to produce explicit classifying maps  for the induced  bundles.
A framework for dimensionality reduction in projective space
(Principal Projective Components) is also developed,
aimed at decreasing the target dimension of the original map.
Several examples are provided as well as
theorems addressing  choices in the construction.
\end{abstract}

\maketitle

\section{Introduction}

Algebraic topology has emerged in the last decade as a powerful framework for analyzing complex high-dimensional data \cite{carlsson2009topology, carlsson2014topological}.
In this setting, a data set is interpreted as a finite subset $X$   of an ambient metric space $(\mathbb{M}, \mathbf{d})$;
in practice  $\mathbb{M}$ is usually
   Euclidean space, a manifold or  a simplicial complex.
If $X$ has been sampled from/around a ``continuous'' object $\mathbb{X} \subset \mathbb{M}$,
the topology inference problem asks whether topological features of $\mathbb{X}$ can be inferred from $X$.
On the one hand, this
is relevant because several data science questions are reinterpretations of  topological tests.
To name a few: clustering is akin to finding the  connected components of a space
\cite{carlsson2010characterization, carlsson2013classifying};
coverage in a sensor network relates to the existence of holes \cite{ghrist2005coverage, de2007coverage}; periodicity and quasiperiodicity are linked to nontrivial 1-cycles in time delay reconstructions \cite{perea2015sliding, perea2016persistent}.

On the other hand, concrete descriptions of the underlying space $\mathbb{X}$ --- e.g. via equations or as a quotient space --- yield  (geometric) models for the data $X$, which can then
be used for simulation,  prediction and hypothesis testing
\cite{martin2010topology,perea2014klein}.
When determining low-dimensional representations
for an abstract data set,
the presence of nontrivial  topology heavily
constraints the dimension and type
of appropriate model spaces.
The simplest example of this phenomenon is perhaps the circle, which is
intrinsically 1-dimensional, but cannot be recovered on the real line without
considerable distortion.
Viral evolution provides another example;
evolutionary trees are often inadequate models, as
horizontal recombination introduces cycles
\cite{chan2013topology}.
The main goal of this paper is to show
that one can leverage knowledge of the topology
underlying a data set, e.g. determined via topological inference, to produce appropriate low-dimensional coordinates.
In particular, we show how the persistent cohomology of a data set can be used to compute economic representations in
real (or complex) projective spaces.
These coordinates capture the topological obstructions
which prevent the data from being recovered in
low-dimensional Euclidean spaces and, correspondingly,
yield appropriate representations.

Persistent (co)homology is one avenue to address the topology inference problem
\cite{weinberger2011persistent, de2011dualities}; it provides a multiscale description of topological features,  and satisfies several inference theorems
\cite{niyogi2008finding, chazal2009sampling, cohen2007stability}.
Going from a persistent homology computation to actionable knowledge about the initial data set,  is in general highly nontrivial.
One would like to determine: how are the persistent homology features reflected on the data?
if the homology suggests a candidate underlying space $\mathbb{X}$, what is a concrete realization (e.g., via equations or as a quotient)? how does the data fit on/around the realization?
At least three approaches have been proposed in the literature
to address these questions: localization, homological coordinatization and circular coordinates.

For a relevant homology class, the idea behind localization is to find an appropriate representative cycle.
This strategy is successful in low dimensions \cite{dey2010approximating,dey2011optimal}, but in  general  even reasonable heuristics lead to NP-hard problems which are NP-hard to approximate \cite{chen2011hardness}.
Homological coordinatization \cite{tausz2011homological}
attempts to map a simplicial complex on the data to
a  simplicial complex with prescribed homology
--- a model suggested by a persistent homology computation.
The  map is selected
by examining the set of chain homotopy classes of chain maps between
the complex on the data and the model.
Selecting an appropriate representative chain map, however, involves combinatorial
optimizations which are difficult to solve in practice.
Circular coordinates  \cite{de2011persistent}
leverages the following fact:
there is a bijection $H^1(B;\Z) \cong \big[B,S^1\big]$
between the 1-dimensional $\Z$-cohomology of a topological space $B$,
and the set of homotopy classes of maps from $B$ to the  circle $S^1$.
When $B$ is a simplicial complex with the data as vertex set,
this observation is used to turn 1-dimensional (persistent) $\Z/p$-cohomology classes for  appropriate primes $p$, into circle-valued functions on the data.

The circular coordinates approach has been used successfully ---
for instance to parameterize periodic dynamics \cite{de2012topological} ---
and  is  part of a bigger picture:
if $G$ is an abelian group, $n\in \Z_{\geq 0}$ and
$K(G,n)$  is an Eilenberg-MacLane space
(i.e. its $n$-th homotopy group is $G$ and  the others are trivial),
then \cite[Chapter 22, sec. 2]{may1999concise}
\begin{equation}\label{eq:BrownRep}
  H^n(B; G) \cong \big[B, K(G,n)\big]
\end{equation}
In particular $K(\Z,1) $ is homotopy equivalent to $S^1$; that is,
 $K(\Z,1) \simeq S^1$.
Since the bijection in (\ref{eq:BrownRep}) can be chosen so that it
commutes with morphisms induced by
maps of spaces (i.e.  natural),
using maps to other Eilenberg-MacLane spaces emerges as an avenue for
interpreting persistent cohomology computations,
and more importantly, to produce coordinates which leverage knowledge
of the underlying topology.
One quickly runs into difficulties as the next candidate spaces
$K(\Z/2,1) \simeq \RP^\infty$,
$K(\Z/p,1) \simeq S^\infty / (\Z/p)$ (action via scalar multiplication
on $S^\infty \subset \C^\infty$ by the $p$-th roots of unity)  and
$K(\Z,2) \simeq \CP^\infty$
are  infinite.
The purpose of this paper is to address the cases $\RP^\infty$ and $\CP^\infty$.

\subsection{Approach}
We use the fact that if $\F$ is $\R$ or $\C$,
then $\FP^\infty$ is the
Grassmannian  of 1-planes in $\F^\infty$ and --- for a topological space $B$ --- $\big[B,\FP^\infty\big]$ can be naturally identified with
the set of isomorphism classes of $\F$-line bundles over $B$ (Theorem \ref{thm:BundleClassification}).
The isomorphism type of an $\R$-line bundle over $B$
is  uniquely determined by its first Stiefel-Whitney
class $w_1 \in H^1(B;\Z/2)$,
and  the isomorphism type of a $\C$-line bundle over $B$
is  classified by its first Chern class
$c_1 \in H^2(B;\Z)$.
These classes can be identified
as elements of  appropriate sheaf cohomology groups,
and  a classifying
map $f: B \longrightarrow \FP^\infty$ can be
described explicitly from a \v{C}ech cocycle representative (Theorem \ref{thm:ClassifyingMapFormula1}).
This  links  cohomology to $\FP^\infty$-coordinates.
For the case of point cloud data  $X \subset (\mathbb{M}, \mathbf{d})$, $B$ will be an open neighborhood of $X$ in $\mathbb{M}$,
and we use the persistent cohomology
of a sparse filtration on $X$ to generate appropriate \v{C}ech cocycle representatives  (Theorem \ref{thm:ClassifyingFormulaSparse}).
Evaluating the resulting classifying map $f$ on the data $X$
produces another point cloud $f(X)\subset \FP^n$.
We develop in
Section \ref{sec:ProjectiveCoordinates}
a dimensionality reduction procedure for subsets of $\FP^n$,
referred to as Principal Projective Component Analysis.
When this methodology is
applied to the point cloud $f(X)\subset \FP^n$, it produces a sequence of
 projections
$P_k : f(X)\longrightarrow \FP^k $, $k = 1,\ldots, n$,
each of which attempts to minimize an appropriate notion
of distortion.
The sets $P_k \circ f(X)$ are the $\FP^k$-coordinates of
$X$, for the \v{C}ech cocycle giving rise to $f$.

\subsection{Organization}
We end this introduction in Subsection \ref{subsec:motivation}
with a motivating data example.
Section \ref{sec:preliminaries} is devoted to  the theoretical preliminaries needed in later sections of the paper;
in particular, we provide a terse introduction to
vector bundles, the \v{C}ech cohomology of (pre)sheaves
and the persistent cohomology of filtered complexes.
In Sections \ref{sec:ClassifyingMaps} and
\ref{sec:TransitionFuncFromSimplicialCohomology}
we prove the main theoretical results underlying our method.
In particular, we show how singular cohomology classes
 yield explicit and computable maps to real (and complex) projective space.
Sections \ref{sec:ProjectiveCoordinates}, \ref{sec:ChoosingCocycles} and \ref{sec:TransitionFuncFromPersistentCohomology}
deal with   computational aspects.
Specifically, section \ref{sec:ProjectiveCoordinates}
develops Principal Projectives Components,
a dimensionality reduction method in projective space,
used to lower the target dimension of the
maps developed in Sections \ref{sec:ClassifyingMaps} and \ref{sec:TransitionFuncFromSimplicialCohomology}.
Section \ref{sec:ChoosingCocycles} deals with the problem
of choosing cocycle representatives; we present
theorems and examples which guide these choices.
We end the paper in Section \ref{sec:TransitionFuncFromPersistentCohomology} showing
how persistent cohomology can be used to make the approach
practical for data sets with low intrinsic dimension.

\subsection{Motivation and  road map}
\label{subsec:motivation}
Let us illustrate some of the  ideas we will
develop in this paper via an example.
To this end,
let $\mathbb{X}$ be the collection of intensity-centered $7\times 7$
grey-scale images depicting a   line segment of fixed width,
as show in
Figure \ref{fig:exemplaryPatches}.

\begin{figure}[!ht]
\centering
\includegraphics[scale = 0.35]{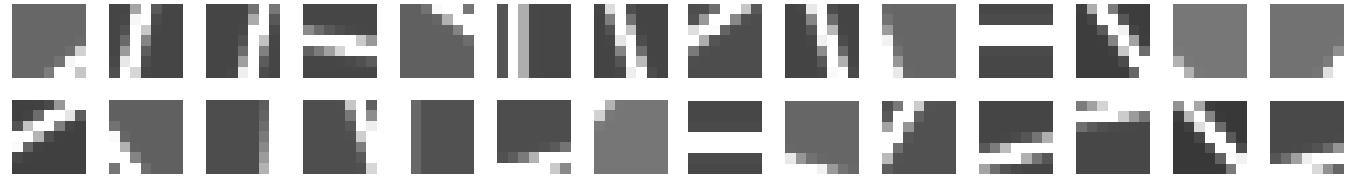}
\caption{Typical elements in  $\mathbb{X}$}
\label{fig:exemplaryPatches}
\end{figure}

By intensity-centered we mean that if the pixel values of
$x\in \mathbb{X}$ are encoded as real numbers between
-1 (black) and 1 (white), then the mean pixel intensity of $x$ is zero.
We  regard $\mathbb{X}$ as a subset of $\R^{49}$
by representing each  image
as a vector of pixel intensities, and
endow it  with the distance inherited from $\R^{49}$.
A data set $X\subset \mathbb{X}$ is generated by sampling $ 1,682$ points.
The thing to notice is that even when the ambient space for $\mathbb{X}$ is $\R^{49}$, the
intrinsic dimensionality is low. Indeed, each image
can be generated from two numbers: the angle of  the line
segment with the horizontal, and the signed distance from the segment
to the center of the patch.
This suggests that $\mathbb{X}$ is locally  2-dimensional.

Principal Component Analysis (PCA)
\cite{jolliffe2002principal}
and ISOMAP \cite{tenenbaum2000global}
are standard tools to produce low-dimensional representations for data;
let us see if we can use them to recover an appropriate 2-dimensional representation
for $X$.
Given the first $k$ principal components of $X$, calculated with PCA,
their linear span $V_k$ is interpreted as the $k$-dimensional linear
space which best approximates $X$.
One can calculate the \emph{residual variance} of this approximation,
by computing the mean-squared distance from $X$ to $V_k$.
Similarly, the fraction of variance from $X$ explained by $V_k$ is equal to difference between the variance of $X$ and the residual variance, divided
by the variance of $X$.
A similar notion can be defined for ISOMAP.
We show in Figure \ref{fig:PCA and ISOMAP dim_VS_variance} the fraction of variance, from $X$, recovered by PCA and ISOMAP\footnote{ using a $7$-th nearest neighbor graph.}

\begin{figure}[!ht]
\centering
\subfigure[PCA]{
\includegraphics[scale = 0.339]{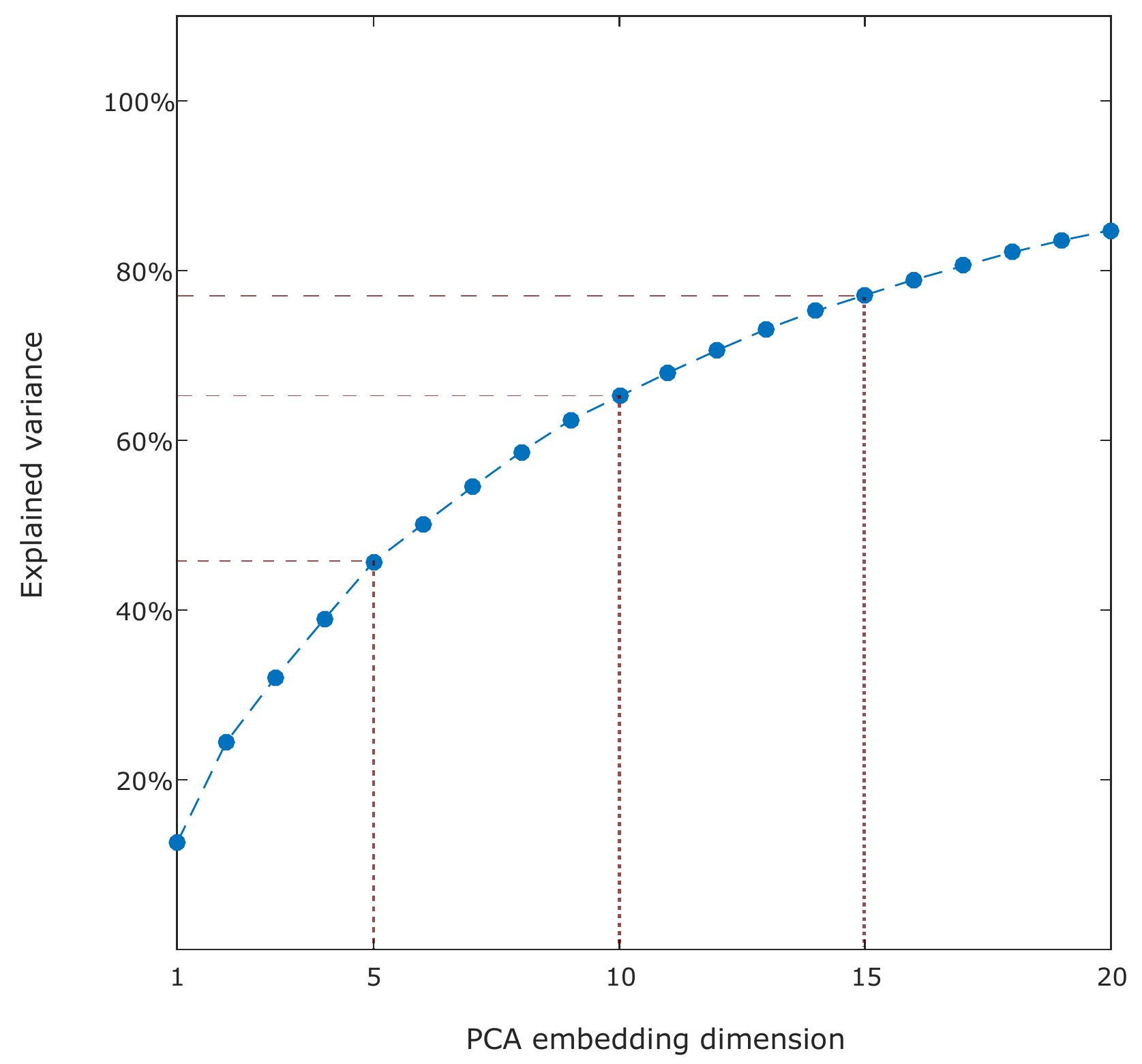}
}
\subfigure[ISOMAP]{
\includegraphics[scale = 0.34]{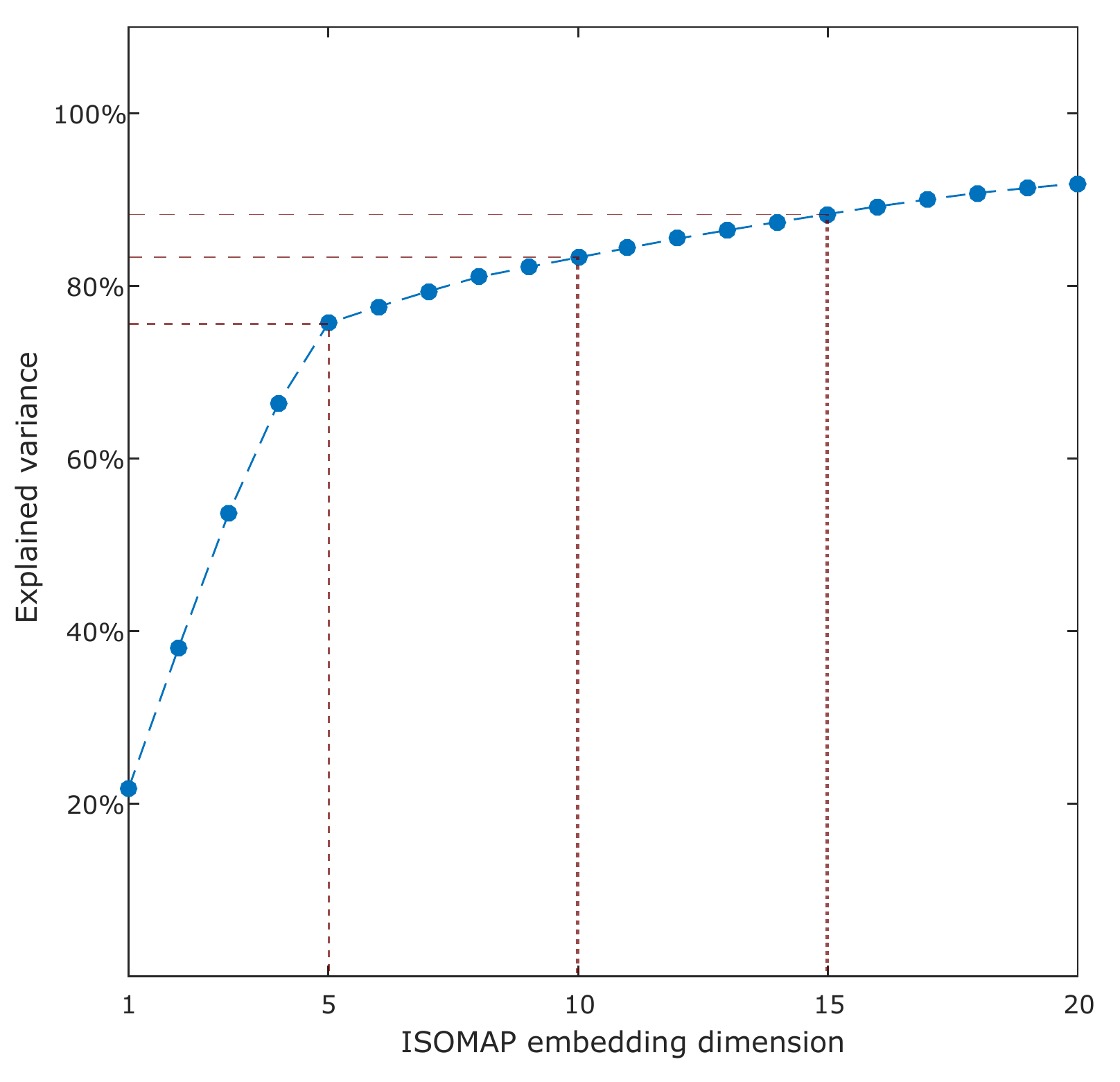}
}
\label{fig:PCA and ISOMAP dim_VS_variance}
\caption{Explained variance, from $X$, versus embedding dimension.}
\end{figure}

Given the low intrinsic dimensionality of $\X$,
it follows from the PCA plot that the original embedding
$\mathbb{X} \hookrightarrow \R^{49}$ is highly non-linear.
Moreover, the ISOMAP plot  implies that  even after accounting for
the way in which $\mathbb{X}$ sits in $\R^{49}$, the data
has intrinsic complexity that prevents it from being recovered faithfully
in $\R^2$ or $\R^3$. That is, the data is locally simple
(e.g. each $x\in X$ is described by angle  and displacement)
but
globally complex.
One possible source of said complexity is whether or not $\mathbb{X}$ is orientable; this is a topological obstruction to low-dimensional Euclidean embeddings.
Using the observation that $\mathbb{X}$ is a manifold,
its  orientability  can be determined from $X$ as we describe next.
First, we construct a covering $\{B_r\}_{r= 0}^n$ of $X$.
From now on we will simplify the set notation
$\{a_\alpha\}_{\alpha \in \Lambda}$
to $\{a_\alpha\}$ in the cases where the indexing set $\Lambda$
can be inferred from the context.
Next,
we apply Multi-Dimensional Scaling (MDS) \cite{kruskal1964multidimensional} on each $B_r$
to get local Euclidean coordinates
and, finally, we  compute the determinant $\omega_{rt} = \pm 1$
associated to the
 change of local  Euclidean coordinates
on  each $B_r \cap B_t$.
If there is global agreement of local orientations (e.g., $\omega_{rt}$ =1 always), or local orientations can be reversed in the appropriate $B_r$'s
so that the result is globally consistent (i.e.
$\{\omega_{rt}\}$ is a coboundary: there exist $\nu_r$'s,  with $\nu_r = \pm 1$, so that $\omega_{rt} = \nu_t/\nu_r$ for all
$r,t=1,\ldots, n$), then $\mathbb{X}$
would be deemed  orientable.

The cover for $X$ will be a collection of open balls centered at landmark data points
selected  through  \verb"maxmin" (also known as farthest point) sampling. That is, first one chooses an arbitrary landmark $\ell_0 \in X$, and if
$\ell_0,\ldots, \ell_r$ have been determined,
then $\ell_{r+1} \in X$ is given by
\[
\ell_{r+1} = \argmax_{x\in X} \big(\min\{\mathbf{d}(x, \ell_0),\ldots, \mathbf{d}(x , \ell_r)\}\big).
\]
Here $\mathbf{d}$ is the geodesic distance estimate from the ISOMAP calculation.
A finite number of steps of \verb"maxmin" sampling results in a landmark set $\{\ell_0,\ldots, \ell_n\}$ which tends to be
well-distributed and well-separated across the data. For the current  example we used $n=14$.
Let\footnote{Determined experimentally using a persistent cohomology computation} $\epsilon_0 = \cdots = \epsilon_n = 9.3$ and
\[
\mathcal{B} =\{B_r\}\;\; \mbox{ where } \;\; B_r = \{x\in X : \mathbf{d}(x,\ell_r) < \epsilon_r\}
\]
To put the radii $\epsilon_r$ in perspective, the distance between  distinct $\ell_r$'s ranges from 4.5 to 14.6, and the mean pairwise distance is
9.5.

Let us now show how to calculate the determinant of the change of
local coordinates. If $B_r \cap B_t \neq \emptyset$,  let
\[
f_r : B_r \longrightarrow \R^2\;\; \mbox{ and } \;\; f_t : B_t \longrightarrow \R^2
\]
be the functions obtained    from applying MDS on
$\left(B_r , \mathbf{d}\big|_{B_r}\right)$ and $\left(B_t , \mathbf{d}\big|_{B_t}\right)$, respectively.
If $\mathsf{O}(2)$ denotes the set of orthogonal $2\times 2$ real matrices, then the solution to the orthogonal Procrustes problem
\[
\left(\Omega_{rt}, \vv_{rt}\right) \;\;
=
\argmin_{\Omega \in \mathsf{O}(2), \; \vv \in \R^2} \; \sum_{x\in B_r \cap B_t}
\big\|
f_t(x) -\big( \Omega\cdot f_r(x) + \vv \big)
\big\|^2
\]
computed  following \cite{schonemann1966generalized},
yields the best linear approximation to an isometric change of local
ISOMAP coordinates.
We let $\omega_{rt}:= \det(\Omega_{rt})$.
In summary, we have constructed a finite covering $\mathcal{B} = \{B_r\}$ for  $X$
and a collection of numbers $\omega_{rt}= \pm 1$
 which, at least for this example,  satisfy the \textbf{cocycle condition}:
for all $ 0 \leq r \leq n$  we have
$\omega_{rr} =1$
and  if $B_r \cap B_s \cap B_t \neq \emptyset$, then
$\omega_{rs}\cdot \omega_{st} = \omega_{rt}$.
In particular, a cohomology computation shows that $\{\omega_{rt}\}$
is not a coboundary and hence
$\mathbb{X}$ is estimated to be non-orientable.

What we will see now, and throughout the paper,
is that this type of cohomological feature
can be further leveraged to produce useful coordinates
for the data.
Indeed (Theorem \ref{thm:ClassifyingMapFormula1} and Corollary \ref{coro:ClassifyingMapFormula1c}):
\\[.2cm]
\noindent \textbf{Theorem.} For $\lambda\in \R$ let $\ppart{\lambda} = \max\{\lambda,0\} $,
let $\F$ be either $\R$ or $\C$, and let $\F^\times = \F\smallsetminus\{\mathbf{0}\}$.
For a metric space $(\mathbb{M},\mathbf{d})$  let
$\{\ell_0,\ldots, \ell_n\}\subset \mathbb{M}$
and fix positive real numbers
$\epsilon_0,\ldots, \epsilon_n$.
If we let $\mathcal{B} = \{B_r\}$ with
$B_r =\{b \in \mathbb{M} \,: \, \mathbf{d}(b,\ell_r) < \epsilon_r\}$,
$B = \bigcup \mathcal{B}$, and there is a collection
$\omega = \{\omega_{rt} : B_r \cap B_t \longrightarrow \F^\times\}$
 of continuous maps
satisfying
the cocycle condition (\ref{eq:CocycleCondition}),
then
$f_\omega: B \longrightarrow\mathbb{F} \mathbf{P}^n$
given in homogeneous coordinates by
\begin{equation}
f_\omega(b) =
\bigg[\omega_{0j}(b)\cdot
\ppart{\epsilon_0 - \mathbf{d}(b,\ell_0)}
:\cdots:
\omega_{nj}(b)\cdot
\ppart{\epsilon_n - \mathbf{d}(b, \ell_n)}
\bigg] \;\;\; ,  \; \; \; b\in B_j
\label{eq:classMapFormula}
\end{equation}
is well-defined (i.e. the value $f_\omega(b)$ is independent of the $j$ for which $b\in B_j$)
 and
classifies the  $\F$-line bundle on $B$
induced by $\big(\mathcal{B}, \omega\big)$.
\\[.2cm]
The preliminaries needed to understand this theorem will be covered in Section \ref{sec:preliminaries}, and
we will devote
Section \ref{sec:ClassifyingMaps} to proving it.
The map $f_\omega$  encodes  in a global manner the local interactions
captured by $\omega$, and the explicit formula allows us to map our  data
set  $X\subset \R^{49}$ into $\RP^{14}$
 using the computed landmarks and determinants of local changes of coordinates.
If we compute the principal projective components for $f_\omega(X) \subset \RP^{14}$ (this will be developed in Section \ref{sec:ProjectiveCoordinates} as a natural extension to PCA in Euclidean space and of  Principal Nested Spheres Analysis
\cite{jung2012analysis}),
the profile of recovered variance shown in Figure \ref{fig:projdDim_vs_variance} emerges:

\begin{figure}[!ht]
\centering
\includegraphics[scale = 0.35]{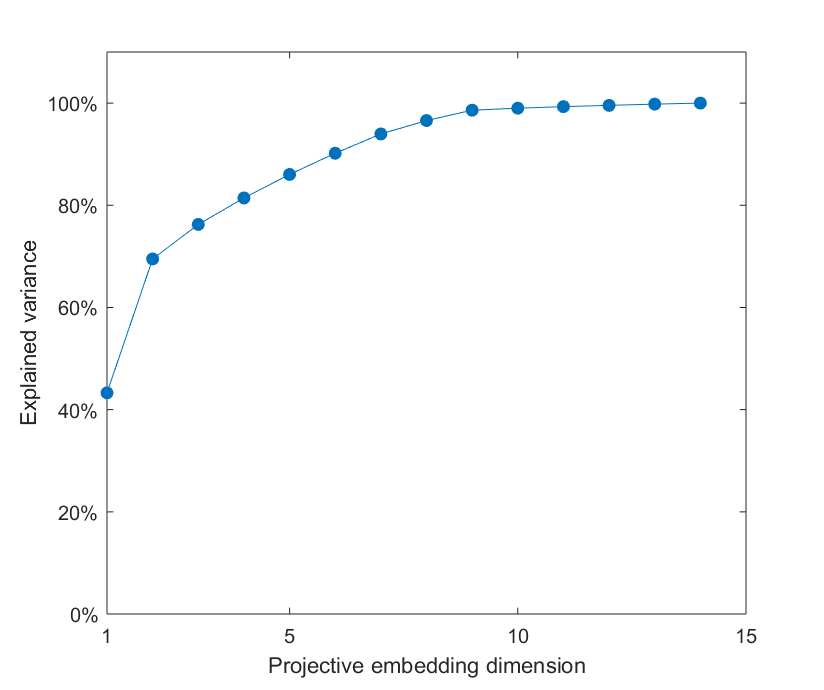}
\caption{Recovered variance, from $f_\omega(X) \subset \RP^{14}$, when projected onto
principal projective subspace of given dimension.}
\label{fig:projdDim_vs_variance}
\end{figure}

From this plot we conclude that 2-dimensional projective space  provides an appropriate reduction for $f_\omega(X)$.
We show in Figure \ref{fig:projCoordPatches} said representation; that is,
each image is placed in the $\RP^2$ coordinate computed via
principal projective  component analysis on $f_\omega(X)$.

\begin{figure}[!ht]
\centering
\includegraphics[scale = 0.6]{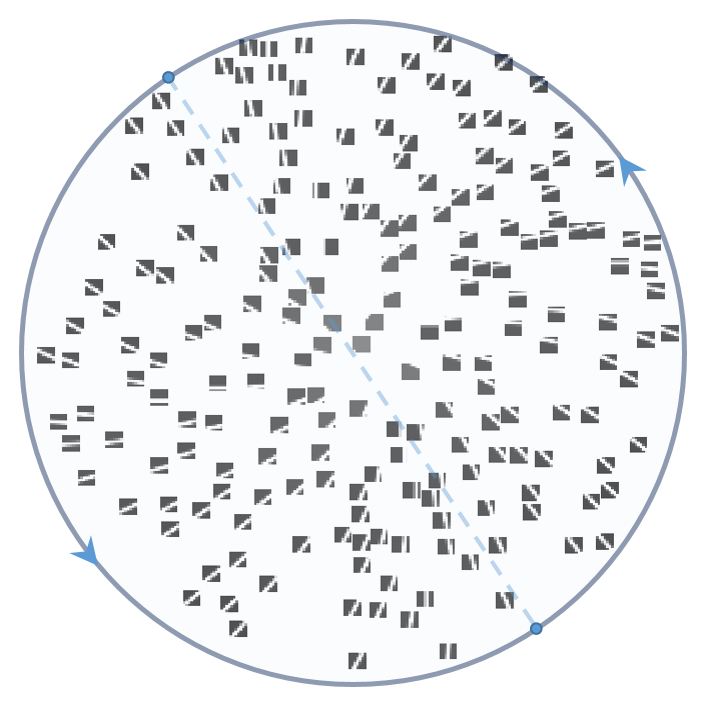}
\caption{Some elements from $X$ placed on their computed $\RP^2$-coordinates.}
\label{fig:projCoordPatches}
\end{figure}

As the figure shows, the resulting coordinates recover
the variables which we identified as describing points in $\mathbb{X}$: the
radial coordinate in $\RP^2$ corresponds to distance from the line segment to the
center of the patch, and the angular coordinate captures
 orientation.
Also, it indicates how $\RP^2 \cong  \mathbb{X}$ parameterizes the original data set $X$.

Though the strategy employed in this example (i.e. local MDS + determinant of local change of  coordinates) was successful,
 one cannot assume in general that the data under analysis has been sampled from/around a manifold.
That said, the result from formula (\ref{eq:classMapFormula}) only requires a covering
$\mathcal{B}$ via open  balls, and a collection of $\F^\times$-valued
continuous functions $\omega = \{\omega_{rt}\}$ satisfying the cocycle condition. Given a finite  subset $X$ of an ambient metric space $(\mathbb{M},\mathbf{d})$, one can always use {\tt maxmin} sampling to produce a covering.
We will show that any
1-dimensional (resp. 2-dimensional) $\Z/2$-cocyle (resp. $\Z$-cocycle)
of the nerve complex $\mathcal{N}(\mathcal{B})$, for $\F = \R$ (resp. $\F = \C$), yields one such $\omega$ (Proposition \ref{prop:Simplicial2CechReal} and Corollary \ref{coro:Simplicial2CechComplex}).
We also show that in dimension 1 (i.e., $\F = \R$) cohomologous cocycles
yield equivalent projective coordinates,
while in dimension 2 (i.e., $\F = \C$) the harmonic cocycle is needed (see Section \ref{sec:ChoosingCocycles}).

It is entirely possible that a cohomology class reflecting sampling artifacts is chosen, as opposed to one associated to robust topological features of a continuous space $\mathbb{X} \subset \mathbb{M}$ underlying $X$. Here is where persistent cohomology comes in.
Indeed,  under mild connectivity conditions of $\mathcal{B}$, distinct cohomology classes yield maps to projective space with distinct homotopy types.
Moreover, the  maps resulting from a persistent  class across its lifetime
are compatible up to homotopy (Theorem \ref{thm:ClassifyingFormulaSparse} and Proposition \ref{prop:Naturality}).
Hence, the result is a multiscale family of compatible maps  which, for classes
with long persistence, are more likely to reflect robust features
of (neighborhoods around) $\mathbb{X}$.

The strategy outlined here is in fact a two-way street.
One can use persistent cohomology to compute multiscale compatible projective coordinates, but the reserve is also useful:
The resulting coordinates can be used to interpret the distinct persistent cohomology (= persistent homology) features of neighborhoods of the data, at least in cohomological dimensions 1 (with $\Z/2$ coefficients) and 2 (with $\Z/p$ coefficients for appropriate primes $p$).

\section{Preliminaries}\label{sec:preliminaries}
\subsection*{Vector Bundles}
For a more thorough review please refer to \cite{milnor1974characteristic}.
Let $E$ and $B$ be topological spaces,
and let $p: E \longrightarrow B$ be a surjective continuous  map.
The triple $\zeta = (E,B,p)$ is said to be a rank $k\in \N$ vector bundle over a field $\F$ (i.e. an $\F$-vector bundle)
if each fiber $p^{-1}(b)$ is an $\F$-vector space of dimension $k$,
and $\zeta$ is locally trivial. That is, for every $b_0\in B$
there exist an open neighborhood $U \subset B$ and a homeomorphism
$\rho_U : U\times \F^k  \longrightarrow p^{-1}(U) $,
called a local trivialization around $b_0$, satisfying:
\begin{enumerate}
\item $p\left(\rho_U(b,\vv)\right) = b$ for every $(b,\vv) \in U\times \F^k$
\item $\rho_U(b,\cdot) : \F^k \longrightarrow p^{-1}(b)$
is an isomorphism of $\F$-vector spaces for each $b\in U$
\end{enumerate}
$E$ and $B$ are  referred to as the total and base space of the bundle,
and the function
$p: E \longrightarrow B$  is  called the projection map.
Two vector bundles $\zeta =(E,B,p)$ and $\zeta' = (E',B,p')$
are said to be isomorphic, $\zeta \cong \zeta'$, if
there exists a homeomorphism $T: E \longrightarrow E'$  so that
$p'\circ T = p $ and for which each  restriction $T|_{p^{-1}(b)} $,
 $b\in B$, is a linear isomorphism.

The collection of isomorphism classes of $\F$-vector bundles of rank $k$ over $B$ is denoted
$\mathsf{Vect}_\F^k(B)$.
An $\F$-vector bundle of rank $1$ is called an $\F$-line bundle, and
 the set $\mathsf{Vect}^1_\F(B)$ is an abelian
group with respect to fiberwise tensor product of $\F$-vector spaces.
\\[.2cm]
\noindent\underline{Examples:}
\begin{itemize}
\item \emph{The trivial bundle $B\times \F^k$}: Fix $k\in \N$ and let
\[
\begin{array}{rccl}
p: &B\times \F^k &\longrightarrow &B \\
&(b,\vv)&\mapsto & b
\end{array}
\]
It follows that $\varepsilon_k = (B\times \F^k, B, p)$ is an $\F$-vector bundle over $B$ of rank $k$.
$\varepsilon_k$ is referred to as the trivial bundle.
\\[-.1cm]
\item \emph{The Moebius band}: Let $\sim$ be the relation on $\R\times \R$ given by
$(x,\uu) \sim (y,\vv)$  if and only if $x-y \in \Z$ and $\uu = (-1)^{x-y}\vv$.
It follows that $\sim$ is an equivalence relation, and if $E = \R\times \R/\sim$,
then $\gor{p} : \R \times \R \longrightarrow \R$ given by $\gor{p}(x,\uu) = x$ descends to a continuous surjective map
$p : E \longrightarrow \R/\Z$. Hence
$\gamma^1 = (E, \R/\Z, p)$ is an $\R$-line bundle over the circle $\R/\Z$, whose
total space is a model for the Moebius band.
Since $E$ is nonorientable, it follows that  $\gamma^1 $ is not isomorphic to the trivial line bundle $ \varepsilon_1$.
\\[-.1cm]
\item \emph{Grassmann manifolds and their tautological bundles}: Let $\F$ be either $\R$ or $\C$.
Given   $k\in \Z_{\geq 0}$ and $m\in \Z_{\geq k}\cup\{\infty\}$,
let $\mathsf{Gr}_k(\F^m)$ be the collection of $k$-dimensional linear subspaces of $\F^m$.
This set is in fact a manifold, referred to as the Grassmannian  of $k$-planes in $\F^m$.
The tautological bundle over $\mathsf{Gr}_k(\F^m)$, denoted $\gamma^k_{m}$, has total space
\[
E(\gamma_{m}^k) = \big\{(V,\uu) \in \mathsf{Gr}_k(\F^m) \times \F^m \; : \; \uu \in V\big\}
\]
and projection $p: E(\gamma_{m}^k) \longrightarrow \mathsf{Gr}_k(\F^m)$ given by $p(V,\uu) = V$.
In particular one has that $\mathsf{Gr}_1(\F^{m+1}) = \FP^{m}$, which shows that
each projective space $\FP^m$ can be endowed with a tautological line bundle $\gamma^1_{m}$.
\\[-.1cm]
\item \emph{Pullbacks}: Let $B$ and $B'$ be topological spaces, let $\zeta = (E,B,p)$
be a vector bundle and let $f: B' \longrightarrow B$ be a continuous map.
The pullback of $\zeta$ through $f$, denoted $f^*\zeta$, is the vector bundle over $B'$ with total space
\[
E(f^*\zeta) = \big\{(b,e) \in B'\times E \;: \; f(b) = p(e)\big\}
\]
and projection $p':E(f^*\zeta) \longrightarrow B'$ given by
$p'(b,e) = b$.

\end{itemize}

\begin{theorem}[{\cite[5.6 and 5.7]{milnor1974characteristic}} ]\label{thm:BundleClassification}
If $B$ is a paracompact topological space and  $\zeta$ is an $\F$-vector bundle
of rank $k$ over $B$, then there exists  a continuous map
\[f_\zeta: B \longrightarrow \mathsf{Gr}_k(\F^\infty)\]
satisfying $f_\zeta^*\gamma_{\infty}^k \cong \zeta$.
Moreover, if $g : B \longrightarrow \mathsf{Gr}_k(\F^\infty)$
is continuous and also satisfies $g^*\gamma_{\infty}^k \cong \zeta$,
then $f_\zeta \simeq  g$ and hence $f_\zeta$ is unique
up to homotopy.
\end{theorem}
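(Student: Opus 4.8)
The plan is to recast the statement in terms of \emph{Gauss maps} and argue entirely at that level, following the classical strategy. Call a continuous map $G\colon E\longrightarrow\F^\infty$ a Gauss map for $\zeta=(E,B,p)$ if its restriction to each fiber $p^{-1}(b)$ is an injective $\F$-linear map. Such a $G$ determines a continuous map $f_G\colon B\longrightarrow\mathsf{Gr}_k(\F^\infty)$ via $f_G(b)=G\big(p^{-1}(b)\big)$ --- the image is a $k$-plane exactly because $G$ is linear and injective there --- together with a canonical isomorphism $\zeta\cong f_G^*\gamma_\infty^k$ given by $e\mapsto\big(p(e),G(e)\big)$. Conversely, a continuous $f\colon B\longrightarrow\mathsf{Gr}_k(\F^\infty)$ equipped with an isomorphism $f^*\gamma_\infty^k\cong\zeta$ produces a Gauss map by composing
\[
E(\zeta)\;\xrightarrow{\ \cong\ }\;E(f^*\gamma_\infty^k)\;\longrightarrow\;E(\gamma_\infty^k)\;\hookrightarrow\;\mathsf{Gr}_k(\F^\infty)\times\F^\infty\;\longrightarrow\;\F^\infty ,
\]
and one checks that $f$ is recovered as $f_G$ from this $G$. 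Hence it suffices to prove: (a) $\zeta$ admits a Gauss map, and (b) any two Gauss maps for $\zeta$ are homotopic through Gauss maps. Indeed (a) yields the desired $f_\zeta$, and given a competitor $g$ with $g^*\gamma_\infty^k\cong\zeta$, statement (b) applied to the Gauss maps of $f_\zeta$ and $g$ produces a homotopy which, composed pointwise with $G\mapsto f_G$, shows $f_\zeta\simeq g$.

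For (a), paracompactness of $B$ lets me pass --- after the standard refinement, the one spot where this hypothesis is genuinely used --- to a \emph{countable} open cover $\{U_i\}_{i\in\N}$ over which $\zeta$ is trivial, together with a subordinate partition of unity $\{\varphi_i\}$. Let $h_i\colon p^{-1}(U_i)\longrightarrow\F^k$ be the projection-to-$\F^k$ of the inverse of a local trivialization over $U_i$, so $h_i$ restricts to a linear isomorphism on each fiber. Define $G_i\colon E\longrightarrow\F^k$ by $G_i(e)=\varphi_i(p(e))\cdot h_i(e)$ for $e\in p^{-1}(U_i)$ and $G_i(e)=0$ otherwise; this is continuous since $\mathrm{supp}(\varphi_i)\subset U_i$. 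Then $G=(G_1,G_2,\dots)\colon E\longrightarrow\bigoplus_i\F^k\cong\F^\infty$ is continuous, $\F$-linear on every fiber, and injective there, because at each $b\in B$ some $\varphi_i(b)>0$ and then $G_i|_{p^{-1}(b)}=\varphi_i(b)\,h_i|_{p^{-1}(b)}$ is already an isomorphism. Thus $G$ is a Gauss map and $f_\zeta:=f_G$ satisfies $f_\zeta^*\gamma_\infty^k\cong\zeta$.

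For (b), let $G_0,G_1$ be Gauss maps for $\zeta$, and let $d_1,d_2\colon\F^\infty\longrightarrow\F^\infty$ be the linear injections sending the standard basis onto the odd- and even-indexed coordinates, respectively. First I would show that $G_0\simeq d_1\circ G_0$ and $G_1\simeq d_2\circ G_1$ through Gauss maps, using the straight-line homotopies $t\mapsto(1-t)\,G_j+t\,(d_{j+1}\circ G_j)$; the content is to check that each stays fiberwise injective for every $t\in[0,1]$, which one does by splitting $\F^\infty$ into its odd and even coordinate blocks and using that $G_j$ (hence $d_{j+1}\circ G_j$) is fiberwise injective. Next, since $d_1\circ G_0$ has image in the odd coordinates and $d_2\circ G_1$ in the even ones, the straight-line homotopy $t\mapsto(1-t)\,(d_1\circ G_0)+t\,(d_2\circ G_1)$ is a Gauss map for all $t$: on a nonzero fiber vector the two summands lie in complementary coordinate blocks and are each nonzero, so their combination never vanishes, and comparing the two blocks separately gives fiberwise injectivity. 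Concatenating these three homotopies yields $G_0\simeq G_1$ through Gauss maps, hence $f_{G_0}\simeq f_{G_1}$ in $\mathsf{Gr}_k(\F^\infty)$.

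I expect the main obstacle to be exactly the two classical pressure points, and these are the only steps I would write out in detail: in (a), the reduction from an arbitrary trivializing cover to a countable one --- where paracompactness and partitions of unity are genuinely needed, via the ``bundling'' argument of \cite[\S5]{milnor1974characteristic} --- and in (b), the verification that the straight-line homotopies remain fiberwise injective. Everything else (continuity of the assembled maps and the identification $\bigoplus_i\F^k\cong\F^\infty$, continuity of the induced homotopies into the colimit Grassmannian, and the bookkeeping relating classifying maps to Gauss maps) is routine.
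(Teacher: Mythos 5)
Your argument is correct and is essentially the classical Milnor--Stasheff proof (5.6 and 5.7) that the paper cites rather than reproves: the partition-of-unity Gauss map construction is exactly the existence argument the paper itself sketches in Section \ref{sec:ClassifyingMaps} for finite trivializing covers, and the odd/even-coordinate straight-line homotopies are the standard uniqueness argument from the cited source. No substantive difference in approach.
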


The previous theorem can  be rephrased as follows:
For $B$ paracompact, the function

\begin{equation}\label{eq:VectToMaps}
\begin{array}{ccc}
\mathsf{Vect}_\F^k(B) & \longrightarrow & \big[B\,,\, \mathsf{Gr}_k(\F^\infty)\big]  \\[.1cm]
[\zeta] & \mapsto & [f_\zeta]
\end{array}
\end{equation}
is a bijection. Any such $f_\zeta$ is  referred to as a \textbf{classifying map} for $\zeta$.
\\[.2cm]
\noindent\underline{Transition Functions.}
If  $\rho_U : U\times \F^k \longrightarrow p^{-1}(U)$ and
$\rho_V : V \times \F^k \longrightarrow \rho^{-1}(V)$ are local trivializations
around a point $b_0\in U\cap V$,
then given $b\in U\cap V$ the composition
\[
\begin{tikzcd}[column sep = scriptsize]
\F^k \arrow{rr}{\rho_V(b,\,\cdot\,)}
& &
p^{-1}(b) \arrow{rr}{\rho_U(b,\,\cdot\,)^{-1}}
& &
\F^k
\end{tikzcd}
\]
defines an element $\rho_{UV}(b) $ in the general linear group $ \mathsf{GL}_k(\F)$.
The resulting function
$\rho_{UV} : U \cap V \longrightarrow \mathsf{GL}_k(\F)$
is a continuous map,
 uniquely determined by
\[
\rho_U^{-1}\circ \rho_V(b,\vv) = \big(b, \rho_{UV}(b)(\vv)\big)\;\;\;\;\; \mbox{for every} \;\;\;\;\;
(b,\vv) \in (U\cap V)\times \F^k.
\]
This characterization of $\rho_{UV}$
readily implies that the set $\{\rho_{UV}\}$ satisfies:
\begin{equation}\label{eq:CocycleCondition}
\fbox{
\begin{Bitemize}
\item[] \\[-.4cm]
\item[] \hfill  \underline{\textbf{The Cocycle Condition}} \hfill \mbox{ } \\[-.1cm]
\item  $\rho_{UU}(b)$ is the identity linear transformation
  for every $b\in U$. \; \\
\item
$
\rho_{UW}(b) = \rho_{UV}(b) \circ \rho_{VW}(b)
$
for every $b\in U\cap V\cap W$.\\
\end{Bitemize}
}
\end{equation}

Each $\rho_{UV} : U\cap V\longrightarrow \mathsf{GL}_k(\F)$ is called a
\emph{transition function} for the bundle $\zeta$,
and the collection $\{\rho_{UV}\}$
is the system of transition functions
associated to the system of local trivializations $\{\rho_U\}$.
More importantly, this construction can be reversed:
If $\mathcal{U} = \{U_r\}$ is a covering of
$B$ and
\[
\omega = \{\omega_{rt}: U_r \cap U_t \longrightarrow \mathsf{GL}_k(\F)\}
\]
is a collection of continuous functions satisfying the cocycle condition,
then one can form the quotient space
\[
E(\omega) = \left(\bigcup_r \left(U_r\times\{r\} \times \F^k\right)\right)\Big/\sim
\]
where   $(b,r,\vv) \sim \left(b,t,\omega_{rt}(b)^{-1}(\vv)\right)$
for  $b\in U_r \cap U_t$.
Moreover, if
\[p_\omega: E(\omega) \longrightarrow B\] is projection onto the first coordinate,
then $\zeta_\omega = \big(E(\omega), B, p_\omega\big)$ is
an  $\F$-vector bundle of rank $k$ over $B$.
It follows that each composition
\[ \rho_r :  \hspace{-.2cm}
\begin{tikzcd}[column sep = scriptsize]
U_r\times\ \F^k \arrow[hook]{r} & U_r \times \{r\}\times \F^k \arrow{r} &
p_\omega^{-1}(U_r)
\end{tikzcd}
\]
is a local trivialization for $\zeta_\omega$,
and that $\omega$ is the associated  system of transition functions.
We say that $\zeta_\omega$ is the vector bundle induced by
$(\mathcal{U}, \omega)$.

\subsection*{(Pre)Sheaves and their \v{C}ech Cohomology}
For a more detailed introduction please refer to \cite{miranda1995algebraic}.
A presheaf $\mathcal{F}$ of abelian groups over a topological space $B$
is a collection of abelian groups $\mathcal{F}(U)$, one for each open set $U\subset B$,
and group homomorphisms $\eta^{U}_{V}: \mathcal{F}(U) \longrightarrow \mathcal{F}(V)$
for each pair $V\subset U$ of open subsets of $B$, called restrictions,  so that:
\begin{enumerate}
\item $\mathcal{F}(\emptyset)$ is the group with one element
\item $\eta_U^U$ is the identity homomorphism
\item $\eta_W^U = \eta^V_W \circ \eta^U_V$ for every triple $W\subset V \subset U$
\end{enumerate}
\noindent Furthermore, a presheaf $\mathcal{F}$ is said to be a sheaf if it satisfies the gluing axiom:
\begin{enumerate}
\item[4.] If $U\subset B$ is open, $\{U_j\}_{ j\in J}$ is an open covering of $U$ and there are elements
$\{s_j \in \mathcal{F}(U_j)\, : \, j\in J\}$ so that
\[\eta^{U_j}_{U_j \cap U_\ell}(s_j) = \eta_{U_j\cap U_\ell}^{U_\ell}(s_\ell)\]
for every non-empty intersection $U_j \cap U_\ell\neq \emptyset$,
with $j,\ell \in J$, then there exists a unique $s\in \mathcal{F}(U)$
 so that $\eta^U_{U_j}(s) = s_j$ for every $j\in J$.
\end{enumerate}

\noindent\underline{Examples:}
\begin{itemize}
\item \emph{Presheaves of constant functions}: Let $G$ be an abelian group and for each open set $\emptyset \neq U \subset B$,
    let $\mathcal{P}_G(U)$ be the set of constant functions from $U$ to $G$. Let $\mathcal{P}_G(\emptyset) = \{0\} \subset G$.
    If for $V\subset U \subset B$ we let $\eta_V^U : \mathcal{P}_G(U) \longrightarrow \mathcal{P}_G(V)$
    be the restriction map $f\mapsto f|_V$, then  $\mathcal{P}_G$ is a presheaf over $B$.
    It is not in general a sheaf since it does not always satisfy the gluing axiom:
    for if $U,V \subset B$ are disjoint nonempty open sets and $|G| \geq 2$, then $f \in \mathcal{P}_G(U)$ and
    $g \in  \mathcal{P}_G(V)$
    taking distinct values cannot be realized as  restrictions of a constant function $h: U\cup V \longrightarrow G$.

\item \emph{Sheaves of locally constant functions}: Let $G$ be an abelian group and for each open set $\emptyset \neq U \subset B$, let $\underline{G}(U)$ be the set of functions $f: U \longrightarrow G$ for which there exists an open set
    $\emptyset\neq V \subset U$ so that the restriction $f|_V : V \longrightarrow G$ is a constant function.
    Define $\underline{G}(\emptyset)$ and $\eta_V^U$ as in the presheaf of constant functions.
    One can check that $\underline{G}$ is   a sheaf over $B$.

\item \emph{Sheaves of continuous functions}: Let $G$ be a topological abelian group and for each open set $\emptyset \neq U \subset B$,
    let $\mathscr{C}_G(U)$ be the set of continuous functions from $U$ to $G$. If  $\mathscr{C}_G(\emptyset)$
    and $\eta_V^U$ are as above, then $\mathscr{C}_G$ is a sheaf over $B$. Moreover,
    $\underline{G}$ is a subsheaf of $\mathscr{C}_G$ in that  $\underline{G}(U) \subset \mathscr{C}_G(U)$
    for every open set $U\subset B$.
    Similarly, if $R$ is a commutative topological ring with unity, and $R^\times$ denotes its (multiplicative) group of units, then
    $\mathscr{C}^\times_R := \mathscr{C}_{R^\times}$ is also a sheaf over $B$.
\end{itemize}

Let $n\geq 0$ be an integer, $\mathcal{U} = \{U_j\}$ an open cover of $B$
and let $\mathcal{F}$ be a presheaf over $B$.
The group of \v{C}ech $n$-cochains is defined as
\[
\check{C}^n(\mathcal{U}; \mathcal{F}) = \prod_{(j_0,\ldots, j_n)} \mathcal{F}(U_{j_0} \cap \cdots \cap U_ {j_n})
\]
Elements of $\check{C}^n(\mathcal{U}; \mathcal{F})$ are denoted  $ \{f_{j_0,\ldots, j_n}\}$, for $f_{j_0,\ldots,j_n} \in \mathcal{F}(U_{j_0} \cap \cdots \cap U_{j_n})$.
If $0 \leq r \leq n$  let $(j_0,\ldots, \tech{j}_r, \ldots, j_n)$
denote the $n$-tuple obtained by removing $j_r$ from the $(n+1)$-tuple $(j_0,\ldots, j_n)$,
let $U_{j_0, \ldots, j_n} = U_{j_0} \cap \cdots \cap U_{j_n}$ and let
\[
\eta_{j_r} : \mathcal{F}(U_{j_0,\ldots, \tech{j}_r , \ldots, j_n})
\longrightarrow
\mathcal{F}(U_{j_0,\ldots, j_n})
\]
be the associated restriction homomorphism.
The coboundary homomorphism
\[
\delta^n : \check{C}^n(\mathcal{U};\mathcal{F}) \longrightarrow
\check{C}^{n+1}(\mathcal{U}; \mathcal{F})
\]
is given by  $\delta^{n}(\{f_{j_0,\ldots, j_n}\}) = \{g_{k_0,\ldots, k_{n+1}}\}$
where
\[
g_{k_0,\ldots, k_{n+1}} = \sum_{r=0}^{n+1} (-1)^r \eta_{k_r}\left(f_{k_0,\ldots, \tech{k}_r,\ldots, k_{n+1}}\right)
\]
One can check that  $\delta^{n+1} \circ \delta^n = 0$.
The group of
\v{C}ech $n$-cocycles $\check{Z}^n(\mathcal{U}; \mathcal{F})$ is the kernel of $\delta^n$,
the group of \v{C}ech $n$-boundaries $\check{B}^n(\mathcal{U};\mathcal{F})
\subset \check{Z}^n(\mathcal{U};\mathcal{F})$
is the image of $\delta^{n-1}$, and the $n$-th \v{C}ech
cohomology group of $\mathcal{F}$ with respect to the covering $\mathcal{U}$ is given by the
quotient of abelian groups
\[
\check{H}^n(\mathcal{U};\mathcal{F}) = \check{Z}^n(\mathcal{U};\mathcal{F}) / \check{B}^n(\mathcal{U};\mathcal{F}).
\]

\subsection*{Persistent Cohomology of Filtered Complexes}
Given a nonempty set $S$, an abstract simplicial complex $K$ with vertices in  $S$
is a set
\[
K \subset \{ \sigma \subset S : \sigma \mbox{ is finite and }\sigma \neq \emptyset \}
\]
for which
$\emptyset\neq \tau \subset \sigma \in K$ always implies $\tau \in K$.
An element $\sigma \in K$ with cardinality $|\sigma| = n+1$ is called an $n$-simplex of $K$,
and a 0-simplex  is referred to as a vertex.
\\[.2cm]
\noindent\underline{Examples:}
\begin{itemize}
\item \emph{The Rips Complex:} Let $(\mathbb{M}, \mathbf{d})$ be a metric space,
let $X\subset \mathbb{M}$ and $\epsilon \geq 0$.
The Rips complex at scale $\epsilon$ and vertex set $X$, denoted  $R_\epsilon(X)$,
is the collection of finite nonempty subsets of $X$ with  diameter  less than $2\epsilon$.

\item \emph{The \v{C}ech Complex:} With $\mathbb{M}, \mathbf{d}, X, \epsilon$ as above, the
(ambient) \v{C}ech complex at scale $\epsilon$ and vertices in $X$ is the set
\[
\check{C}_\epsilon(X) =
\big\{
\{s_0,\ldots, s_n\} \subset  X : B_{\epsilon}(s_0) \cap \cdots \cap B_{\epsilon}(s_n)
\neq \emptyset \; , \;\; n \in \Z_{\geq 0} \big\}
\]
where $B_\epsilon(s)$ denotes the open ball in $\mathbb{M}$ of radius $\epsilon$ centered at $s\in X$.
It can be readily checked that $\check{C}_{\epsilon}(X) \subset R_{\epsilon}(X) \subset \check{C}_{2\epsilon}(X)$
for all $\epsilon > 0$.
\end{itemize}

For each $n \in \Z_{\geq 0}$ let $K^{(n)}$ be the set of $n$-simplices of $K$.
If $G$ is an abelian group,  the set of functions $\varphi : K^{(n)} \longrightarrow G$
which evaluate to zero in all but finitely many $n$-simplices form an abelian group denoted
$
C^n(K;G)
$,
and referred to as the group of $n$-cochains of $K$ with coefficients in $G$.
The coboundary of an $n$-cochain $\varphi \in C^n(K;G)$
is the element $\delta^n(\varphi) \in C^{n+1}(K;G)$ which operates on each  $n+1$ simplex
$\sigma = \{s_0,\ldots, s_{n+1}\}$ as
\[
\delta^n(\varphi)(\sigma) = \sum_{j=0}^{n+1}
(-1)^j\varphi \big(\sigma \smallsetminus \{s_j\}\big)
\]
This defines a homomorphism $\delta^n : C^n(K;G) \longrightarrow C^{n+1}(K;G)$ that,
as can be checked, satisfies $\delta^{n+1}\circ \delta^n = 0$ for all $n\in \Z_{\geq 0}$.
The group of
$n$-coboundaries  $B^n(K;G) = Img (\delta^{n-1})$ is therefore a subgroup of
$Z^n(K;G)= Ker(\delta^n)$, the group of $n$-cocyles,
and the $n$-th cohomology
group of $K$ with coefficients in $G$ is defined as the quotient
\[
H^n(K;G) = Z^n(K;G)/ B^n(K;G).
\]
Notice that if $\mathbb{F}$ is a field, then $H^n(K;\mathbb{F})$ is in fact a vector space over $\mathbb{F}$.

A filtered simplicial complex is a collection $\mathcal{K}= \{K_\epsilon\}_{\epsilon \geq 0}$ of abstract simplicial complexes
so that $K_0 = \emptyset$ and $K_\epsilon \subset K_{\epsilon'}$ whenever $\epsilon \leq \epsilon'$.
If $0 = \epsilon_0 < \epsilon_1 < \cdots$ is a discretization of $\R_{\geq0}$,
then for each field $\mathbb{K}$ and  $n \in \Z_{\geq 0}$ one obtains the diagram of $\mathbb{F}$-vector spaces and
linear transformations
\begin{equation}\label{eq:persistenceModule}
\begin{tikzcd}[column sep = 2em]
H^n(K_{\epsilon_0} ; \mathbb{F})
&
H^n(K_{\epsilon_1} ; \mathbb{F})\arrow[swap]{l}{T_1}
&
\cdots\arrow[swap]{l}{T_2}
&
H^n(K_{\epsilon_r} ; \mathbb{F})\arrow[swap]{l}{T_r}
&
\cdots\arrow{l}
\end{tikzcd}
\end{equation}
where $T_{r} : H^n(K_{\epsilon_{r}} ; \mathbb{F}) \longrightarrow H^n(K_{\epsilon_{r-1}} ; \mathbb{F})$
is  given by
\[
T_{r}\left(\big[\varphi\big]\right) =
\left[
\varphi\big|_{K^{(n)}_{\epsilon_{r-1}}}
\right]
\]
If each $H^n(K_{\epsilon_r}; \mathbb{F})$ is finite dimensional
and for all $r$ large enough $T_r$ is an isomorphism,
we say that  (\ref{eq:persistenceModule}) is of finite type.

The Basis Lemma \cite[Section 3.4]{edelsbrunner2015persistent}
implies that when (\ref{eq:persistenceModule}) is of finite type
one can choose a basis
$V^r = \{\vv^r_1,\ldots, \vv^r_{d_r}\}$ for each $H^n(K_{\epsilon_r};\mathbb{F})$
so that the following compatibility condition holds:
$T_{r}\big(V^r\big) \subset V^{r-1} \cup \{\mathbf{0}\}$
for all $r\in \Z_{\geq 0}$,
and if $T_r(\vv_\ell^r) = T_r(\vv_m^r) \neq \mathbf{0}$, then $\ell = m $.
The set
\[
V = \bigcup\limits_{r\in \Z_{\geq 0 }} V^r
\]
can  be endowed with  a partial order
$\preceq$  where $\vv^j_m \preceq  \vv^r_\ell$ if and only if
$r\geq j$ and  $\vv_m^j=T_{j+1}\circ \cdots \circ T_r(\vv^r_\ell) $.
The maximal chains in $(V,\preceq)$ are pairwise disjoint,
and hence represent independent  cohomological features
of the complexes $K_\epsilon$ which are stable with respect to changes in  $\epsilon$. These are called persistent cohomology classes.
A maximal chain of finite length
\[
\vv_m^j \preceq \vv_k^{j+1} \preceq \cdots \preceq \vv_\ell^r
\]
yields the interval $[\epsilon_{j}, \epsilon_{r}]$,
while an infinite maximal chain
$\vv_m^j \preceq \vv_k^{j+1} \preceq   \cdots$
yields the interval $[\epsilon_{j} ,\infty)$.
This is meant to signify that there is a class
which starts (is born) at the cohomology group corresponding
to the right end-point of the interval, here $\epsilon_r$ or $\infty$. This class, in turn,  is mapped to zero
(it dies) leaving the cohomology group for the left end-point,
here $\epsilon_j$,
but not before.
The  multi-set of all such intervals  (as several chains might yield the same interval) is independent of the choice of compatible bases $V^r$, and can be visualized as a barcode:

\begin{figure}[!ht]
  \centering
  \includegraphics[width=.7\textwidth]{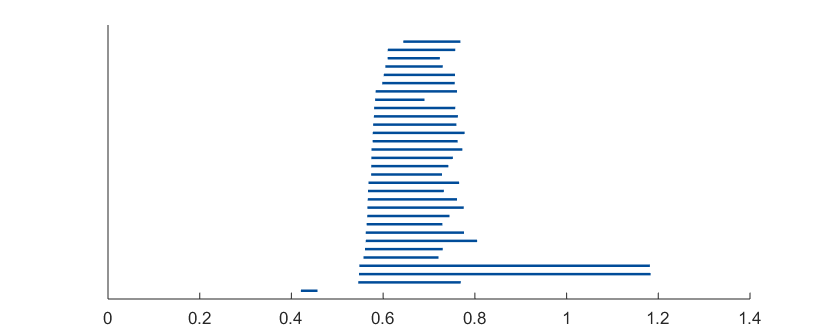}
  \caption{Example of a barcode}\label{fig:exaple_barcode}
\end{figure}

Details on the computation of persistent cohomology, and its advantages over persistent homology,
can be found in  \cite{de2011dualities}.

\section{Explicit Classifying Maps}
\label{sec:ClassifyingMaps}
The goal of this section is to derive equation (\ref{eq:classMapFormula}),
which is in fact a  specialization
of the proof
of Theorem \ref{thm:BundleClassification}
to the case of line bundles over  metric spaces with finite trivializing covers.
When a metric is given,   the partition of unity
involved in the argument can be described explicitly in terms of bump functions
supported on metric balls.
Moreover, the local trivializations used in the proof
can be replaced by transition functions which ---
as we will see in sections \ref{sec:TransitionFuncFromSimplicialCohomology}
and \ref{sec:TransitionFuncFromPersistentCohomology} ---
can be calculated in a robust multiscale manner from the
persistent cohomology of an appropriate sparse filtration.
From this point on, all topological spaces are assumed to be
paracompact and Hausdorff.

\subsection*{Classifying maps in terms of local trivializations}
Let us sketch the proof of existence in Theorem \ref{thm:BundleClassification}
when $B$ has a finite trivializing cover.
Starting with  local trivializations
\begin{equation}\label{eq:TrivializingCover}
\begin{tikzcd}
\rho_r: U_r \times \F^k \arrow{r}{\cong} &p^{-1}(U_r)&  r=0,\ldots, n
\end{tikzcd}
\end{equation}
for the vector bundle
$\zeta = (E,B,p)$,
let $\mu_r : p^{-1}(U_r) \longrightarrow \F^k$ be
$\mu_r\big(\rho_r(b,\vv)\big) = \vv$ for all $(b,\vv) \in U_r \times \F^k$.

\begin{definition}
A collection of continuous maps   $\varphi_r : U_r \longrightarrow \R_{\geq 0}$   is called a
\emph{partition of unity dominated} by $\mathcal{U} = \{U_r\}$
if
\[
\sum_{r} \varphi_r = 1 \;\;\;\;\;\; \mbox{ and }\;\;\;\;\;\;
\mathsf{support}(\varphi_r) \subset \mathsf{closure}(U_r)
\]
\end{definition}
Notice that
this notion differs from the usual partition of unity subordinated to a cover in that supports need not be contained in the open sets.
However, this is enough for our purposes.
Notice that since for paracompact  spaces there is always a partition of unity
subordinated to a given cover, the same is true in the dominated case.

Let $\psi: [0,1] \longrightarrow [0,1]$
be any homeomorphism  so that  $\psi(0) = 0$ and $\psi(1)=1$.
If $\{\varphi_r\}$ is a partition of unity dominated by the
trivializing cover from equation (\ref{eq:TrivializingCover})
and we let $\psi_r = \psi\circ \varphi_r$,
then each $\mu_r : p^{-1}(U_r) \longrightarrow \F^k$  yields a fiberwise linear map
$\tech{\mu}_r : E \longrightarrow \F^k$
given by
\[\tech{\mu}_r(e) =
\left\{
  \begin{array}{cc}
    \psi_r\big(p(e)\big)\cdot \mu_r(e) & \hbox{ if }\;\;\;p(e)\in U_r \\ \\
    \mathbf{0} & \hbox{ if }\;\;\;p(e)\notin U_r
  \end{array}
\right.
\]

Thus one has a continuous map
\[
\begin{array}{rccc}
\tech{\mu} : &E&\longrightarrow & \F^k \oplus \cdots \oplus \F^k \\
&e& \mapsto & \left[\tech{\mu}_0(e),\ldots, \tech{\mu}_n(e)\right]
\end{array}
\]
which, as  can be checked, is linear and injective on each fiber.
It follows from \cite[Lemma 3.1]{milnor1974characteristic} that
the induced continuous map
\[
\begin{array}{rccc}
f_\zeta : &B& \longrightarrow & \mathsf{Gr}_k\left(\F^{k(n+1)}\right) \\
&b& \mapsto & \tech{\mu}\left(p^{-1}(b)\right)
\end{array}
\]
 satisfies $f_\zeta^*\left(\gamma^k_{m}\right) \cong \zeta$,
 if $m = k(n+1)$.
This completes the sketch of the proof;
 let us now describe $f_\zeta$ more explicitly  in terms
of transition functions.
The main reason for this change is that transition functions
can be determined via a cohomology computation.

\subsection*{Classifying maps from transition functions}
Fix $b\in B$ and let $0 \leq j\leq n$ be   so that $b \in U_j$.
If $\{\vv_1,\ldots, \vv_k\}$ is a basis for $\F^k$,  then
$
\{ \rho_j ( b, \vv_s ) \; | \; s=1,\ldots, k  \}
$
is a basis for $p^{-1}(b)$ and therefore
\begin{equation}
f_\zeta(b) = \mathsf{Span}_\F\, \left\{ \tech{\mu}\big(\rho_j(b,\vv_s)\big)\;:\; s=1,\ldots, k \right\}
\label{eq:classMapFormula2}
\end{equation}
where
\[\tech{\mu}\big(\rho_j(b,\vv_s)\big) = \left[ \tech{\mu}_0\big(\rho_j(b,\vv_s)\big),\ldots, \tech{\mu}_n\big(\rho_j(b,\vv_s)\big)\right]
\in \F^{k(n+1)}\]
If $\{\omega_{rt}: U_r \cap U_t \longrightarrow \mathsf{GL}_k(\F)\}$
is the collection of transition functions
for $\zeta$ associated to the system of local trivializations $\{\rho_r\}$,
then whenever $U_r \cap U_t \neq \emptyset$ we have the commutative diagram
\[
\begin{tikzcd}[column sep = scriptsize]
\mbox{ }&p^{-1}(U_r \cap U_t) \\
(U_r\cap U_t) \times \F^k \arrow{rr}[below]{(b,\vv)\mapsto \big(b\;,\;\omega_{rt}(b)^{-1}(\vv)\big)}
\arrow{ur}{\rho_r} & &
(U_r\cap U_t) \times \F^k \arrow{lu}[above]{\rho_t}
\end{tikzcd}
\]
Let $0\leq l \leq n$. If $b\in U_j \smallsetminus U_l$, then  $\tech{\mu}_l\big(\rho_j(b,\vv_s)\big) =0$;
else $b \in U_j\cap U_l$ and
\begin{eqnarray*}
\tech{\mu}_l\big(\rho_j(b,\vv_s)\big) &=&
\tech{\mu}_l\left(\rho_l\big(b, \omega_{jl}(b)^{-1}(\vv_s)\big)\right) \\
&=& \psi_l(b)\cdot \omega_{lj}(b)(\vv_s)
\end{eqnarray*}
\noindent Putting this calculation together with equation (\ref{eq:classMapFormula2}), it follows that for $b\in U_j$
\begin{equation}
f_\zeta(b) = \mathsf{Span}_\F\Big\{ \big[\psi_0(b)\cdot \omega_{0j}(b)(\vv_s)\,, \ldots,\,\psi_n(b)\cdot \omega_{nj}(b)(\vv_s)  \big]\; : \; s =1,\ldots, k\Big\}
\label{eq:classMapFormula3}
\end{equation}

\subsection*{The case of line bundles}
If  $k=1$ we can take $\vv_1 = \textbf{1} \in \F$,
and abuse notation by writing $\omega_{rt}(b) \in \F^\times$ instead of $\omega_{rt}(b)(\mathbf{1})$.
Moreover, in this case we have $\mathsf{Gr}_k(\F^{k(n+1)}) = \FP^n$,
and if we use homogeneous coordinates and $\psi(x) = \sqrt{x}$,
then $f_\zeta: B \longrightarrow \FP^n$ can be expressed locally (i.e. on each $U_j$) as
\[
f_\zeta(b) =
\left[
\omega_{0j}(b)\cdot\sqrt{\varphi_0(b)}
: \cdots :
\omega_{nj}(b)\cdot \sqrt{\varphi_n(b)} \right]\;\;\;\; , \;\;\;\; b\in U_j
\]
The choice $\psi(x) = \sqrt{x}$ is so that when the transition functions
$\omega_{rt}$ are unitary, i.e. $|\omega_{rt}(b)| =1 $, then
the formula above without homogeneous coordinates
produces a representative of $f_\zeta(b)$ on the unit sphere of $\F^{n+1}$.
We summarize the results thus far in the following theorem:
\begin{theorem}\label{thm:ClassifyingMapFormula1}
Let $B$ be a  topological space and  let
$\mathcal{U} = \{U_r\}_{r=0}^n$ be
an open  cover.
If $\{\varphi_r\}$ is a partition of unity dominated by $\mathcal{U}$,
$\F $ is $\R$ or $\C$,
 and
\[\omega =
\big\{
\omega_{rt} : U_r \cap U_t \longrightarrow \F^\times
\big\}
\]
is a collection
of continuous maps  satisfying the cocycle condition (\ref{eq:CocycleCondition}),
then the map
$
f_\omega: B \longrightarrow \FP^n
$
given  in homogenous coordinates
by
\begin{equation}\label{eq:classMapFormula_final}
f_\omega(b) =
\left[
\omega_{0j}(b)\cdot \sqrt{\varphi_0(b)}:
\cdots:
\omega_{nj}(b)\cdot \sqrt{\varphi_n(b)}
\right] \;\;\;\;\; , \;\;\;\;\; b\in U_j
\end{equation}
is well-defined and classifies  the $\F$-line bundle $\zeta_\omega$
induced by $(\mathcal{U}, \omega)$.
\end{theorem}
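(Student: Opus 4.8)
The statement is the specialization to line bundles over a space admitting a finite trivializing cover of the constructive proof of Theorem~\ref{thm:BundleClassification} sketched above, so the plan is to run that construction for the bundle $\zeta_\omega$ and then verify that the resulting formula is well defined. First I would recall, as in the Preliminaries, that the pair $(\mathcal{U},\omega)$ determines the $\F$-line bundle $\zeta_\omega=(E(\omega),B,p_\omega)$, that the maps $\rho_r:U_r\times\F\to p_\omega^{-1}(U_r)$ are local trivializations, and that $\omega=\{\omega_{rt}\}$ is precisely the associated system of transition functions. Since $B$ is paracompact and Hausdorff, Theorem~\ref{thm:BundleClassification} applies to $\zeta_\omega$.

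Next I would carry out the construction from the sketch of Theorem~\ref{thm:BundleClassification} on $\zeta_\omega$ using the data at hand: the dominated partition of unity $\{\varphi_r\}$ and the homeomorphism $\psi(x)=\sqrt{x}$ of $[0,1]$, so that $\psi_r=\sqrt{\varphi_r}$. This yields the fiberwise linear injection $\widehat{\mu}:E(\omega)\to\F^{n+1}$ with $\widehat{\mu}_r(e)=\sqrt{\varphi_r(p_\omega(e))}\,\mu_r(e)$ when $p_\omega(e)\in U_r$ and $\widehat{\mu}_r(e)=\mathbf 0$ otherwise, and hence the continuous map $f_{\zeta_\omega}:B\to\mathsf{Gr}_1(\F^{n+1})=\FP^n$, $b\mapsto\widehat{\mu}(p_\omega^{-1}(b))$, which classifies $\zeta_\omega$ in the sense of Theorem~\ref{thm:BundleClassification}. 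To identify $f_{\zeta_\omega}$ with the claimed formula, I would invoke the computation already made in deriving (\ref{eq:classMapFormula3}) from (\ref{eq:classMapFormula2}), now with $k=1$ and $\vv_1=\mathbf 1\in\F$: for $b\in U_j$ the basis vector $\rho_j(b,\mathbf 1)$ of $p_\omega^{-1}(b)$ maps under $\widehat{\mu}$ to the vector whose $l$-th entry is $\sqrt{\varphi_l(b)}\,\omega_{lj}(b)$ if $b\in U_l$ and $0$ if $b\notin U_l$; in homogeneous coordinates this is exactly $f_\omega(b)$ as in (\ref{eq:classMapFormula_final}). Thus $f_\omega=f_{\zeta_\omega}$, and the theorem follows once well-definedness is established.

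For well-definedness I would argue directly. Fix $b\in U_j\cap U_l$. For every index $r$ with $b\in U_r$, the cocycle condition (\ref{eq:CocycleCondition}) on $U_r\cap U_l\cap U_j$ (together with $\omega_{ll}=1$) gives $\omega_{rj}(b)=\omega_{rl}(b)\cdot\omega_{lj}(b)$, hence $\omega_{rj}(b)\sqrt{\varphi_r(b)}=\omega_{lj}(b)\cdot\big(\omega_{rl}(b)\sqrt{\varphi_r(b)}\big)$; for every $r$ with $b\notin U_r$ both sides vanish. Since $\omega_{lj}(b)\in\F^\times$, the $j$-representative and the $l$-representative of $f_\omega(b)$ span the same line, so $f_\omega(b)$ is independent of the choice of $j$ with $b\in U_j$. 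That this line is genuinely a point of $\FP^n$, i.e. that the representative vector is nonzero, follows because $\sum_r\varphi_r(b)=1$ forces $\varphi_r(b)>0$ for some $r$; for such an $r$ one has $b\in U_r$, so $\omega_{rj}(b)$ is defined and the $r$-th coordinate $\omega_{rj}(b)\sqrt{\varphi_r(b)}$ is a nonzero element of $\F$.

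I do not expect a substantive obstacle: essentially all the content is already contained in the sketch of Theorem~\ref{thm:BundleClassification} and in the derivation of (\ref{eq:classMapFormula3}), and what remains is the bookkeeping above. The one point deserving care is the continuity of $f_\omega$ at boundary points of the sets $U_r$ --- where a factor $\sqrt{\varphi_r}$ tending to $0$ must control a transition function $\omega_{rj}$ that is a priori only continuous on $U_r\cap U_j$ --- but this is precisely the continuity that is built into $\widehat{\mu}$ in the proof of Theorem~\ref{thm:BundleClassification} under the domination hypothesis on $\{\varphi_r\}$, so by routing the argument through the identification $f_\omega=f_{\zeta_\omega}$ we inherit it rather than re-deriving it.
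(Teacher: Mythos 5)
Your proposal is correct and follows essentially the same route as the paper: Theorem \ref{thm:ClassifyingMapFormula1} is stated there as a summary of the preceding derivation, namely applying the constructive proof of Theorem \ref{thm:BundleClassification} to the bundle $\zeta_\omega$ induced by $(\mathcal{U},\omega)$ with $\psi(x)=\sqrt{x}$ and rewriting $\tech{\mu}$ via the transition functions to obtain (\ref{eq:classMapFormula3}) and then (\ref{eq:classMapFormula_final}). Your added explicit check of well-definedness via the cocycle condition is consistent with (and slightly more detailed than) what the paper leaves implicit in the identification $f_\omega=f_{\zeta_\omega}$.
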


\subsection*{Line bundles over metric spaces}
If $B$ comes equipped with a  metric $\mathbf{d}$, then (\ref{eq:classMapFormula_final}) can be further specialized to
a  covering  via
open balls, and a dominated partition of unity  constructed
from bump functions supported on the closure of each ball.
Indeed, let
\[ B_{\epsilon_r}(\ell_r) = \{b\in B\, :\, \mathbf{d}(b,\ell_r) < \epsilon_r\}\;,\;\;\;
r = 0,\ldots, n\] for some
collection $\{\ell_0,\ldots, \ell_n\} \subset B$ and
radii $\epsilon_r > 0$.
\begin{proposition}
Let $(B,\mathbf{d})$ be a metric space
and let $\mathcal{B} = \{ B_{\epsilon_r}(\ell_r)\}_{r= 0 }^n$
be an open cover.
If $\phi: \R \longrightarrow \R_{\geq 0}$ is a continuous map so that $\phi^{-1}(0) = \R_{\leq 0}$,
and $\lambda_0, \ldots, \lambda_n \in \R_{> 0}$ is a set of  weights, then
\[
\varphi_r (b)
=
\frac{
\lambda_r \cdot \phi \left( 1 - \frac{\mathbf{d}(b,\ell_r)}{\epsilon_r}\right)
}
{
\sum\limits_{t=0}^n
\lambda_t\cdot\phi\left(1  - \frac{\mathbf{d}(b,\ell_t)}{\epsilon_t}\right)
}
\;\; , \;\;\; r = 0,\ldots, n
\]
is a partition of unity for $B$ dominated  by  $\mathcal{B}$.
\end{proposition}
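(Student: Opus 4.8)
The plan is to check, one at a time, the three requirements in the definition of a partition of unity dominated by $\mathcal{B}$: that each $\varphi_r$ is continuous and $\R_{\ge 0}$-valued, that $\sum_r \varphi_r \equiv 1$, and that $\mathsf{support}(\varphi_r) \subset \mathsf{closure}\big(B_{\epsilon_r}(\ell_r)\big)$. The first move I would make is to name the common denominator
\[
D(b) \;=\; \sum_{t=0}^n \lambda_t \cdot \phi\!\left(1 - \tfrac{\mathbf{d}(b,\ell_t)}{\epsilon_t}\right)
\]
and prove that $D$ is continuous and strictly positive on all of $B$. Continuity is routine bookkeeping: $b \mapsto \mathbf{d}(b,\ell_t)$ is $1$-Lipschitz, hence continuous; composing with the affine map $x \mapsto 1 - x/\epsilon_t$ and then with the continuous $\phi$ preserves continuity; and a finite sum of continuous real-valued functions is continuous. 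Strict positivity is the only place where the hypothesis that $\mathcal{B}$ is a \emph{cover} enters: given $b \in B$ pick $t$ with $b \in B_{\epsilon_t}(\ell_t)$, so $\mathbf{d}(b,\ell_t) < \epsilon_t$ and therefore $1 - \mathbf{d}(b,\ell_t)/\epsilon_t > 0$; since $\phi$ is $\R_{\ge 0}$-valued with $\phi^{-1}(0) = \R_{\le 0}$ we get $\phi(1 - \mathbf{d}(b,\ell_t)/\epsilon_t) > 0$, and as $\lambda_t > 0$ while every remaining summand is $\ge 0$, we conclude $D(b) > 0$.

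With $D$ in hand the rest is short. Each $\varphi_r = \lambda_r\,\phi\big(1 - \mathbf{d}(\cdot,\ell_r)/\epsilon_r\big)/D$ is a quotient of a continuous function by a continuous, nowhere-vanishing one, hence continuous on $B$, and in particular on $B_{\epsilon_r}(\ell_r)$; it is $\ge 0$ because its numerator is the product of the positive scalar $\lambda_r$ with the $\R_{\ge 0}$-valued $\phi(\cdot)$ and its denominator is positive. The normalization identity is then automatic, $\sum_{r=0}^n \varphi_r(b) = D(b)/D(b) = 1$ for all $b \in B$. (A small point to flag: the definition writes $\varphi_r$ as a map on $B_{\epsilon_r}(\ell_r)$, so here I would read the sum with each $\varphi_r$ extended by $0$ off its ball; this extension is continuous and consistent with the formula, since — as the support computation below shows — $\varphi_r$ already vanishes on $\mathsf{closure}\big(B_{\epsilon_r}(\ell_r)\big)\smallsetminus B_{\epsilon_r}(\ell_r)$.)

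For the support condition I would use $\phi^{-1}(0) = \R_{\le 0}$ once more: for $b \in B$, $\varphi_r(b) \ne 0$ iff $1 - \mathbf{d}(b,\ell_r)/\epsilon_r > 0$ iff $\mathbf{d}(b,\ell_r) < \epsilon_r$, so $\{b \in B : \varphi_r(b)\ne 0\} = B_{\epsilon_r}(\ell_r)$; taking closures yields $\mathsf{support}(\varphi_r) = \mathsf{closure}\big(B_{\epsilon_r}(\ell_r)\big)$, so the required inclusion holds, in fact with equality. That completes all three checks. I do not anticipate a genuine obstacle — the statement is essentially an unwinding of definitions — and the only step deserving any care is the strict positivity of $D$, which simultaneously uses the covering hypothesis, the sign condition $\lambda_t > 0$, and the normalization $\phi^{-1}(0) = \R_{\le 0}$; conversely, the fact that $\phi$ need only be $\R_{\ge 0}$-valued (rather than positive everywhere) is exactly what pins the support of $\varphi_r$ down to the \emph{closed} ball rather than all of $B$.
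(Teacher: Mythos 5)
Your proposal is correct, and since the paper states this proposition without proof (treating it as routine), your argument supplies exactly the verification that is left implicit: continuity and strict positivity of the common denominator (the latter being where the covering hypothesis, $\lambda_t>0$, and $\phi^{-1}(0)=\R_{\leq 0}$ all enter), followed by the normalization and the identification of $\{\varphi_r \neq 0\}$ with the open ball to get the support condition. No gaps; the parenthetical about extending $\varphi_r$ by zero is unnecessary since the displayed formula already defines $\varphi_r$ on all of $B$, but this does not affect correctness.
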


Due to the shape of its graph, the map $ b \mapsto \lambda_r \cdot \phi \left(1 - \frac{\mathbf{d}(b,\ell_r)}{\epsilon_r}\right)$
is often referred to as a bump function supported on   $\overline{B_r}$.
The height of the bump is controlled by the weight $\lambda_r$, while its overall shape
is captured by the function $\phi$.
Of course one can choose different functions $\phi_r$ on each  ball,
for instance to capture local density if $B$ comes equipped with a measure.
Some examples of bump-shapes are:
\begin{itemize}
  \item \emph{Triangular:} The  positive part of $x \in \R$ is defined as
        $\ppart{x} = \max\{x, 0\}$, and
      $b\mapsto \lambda \cdot \ppart{1 - \frac{\mathbf{d}(b,\ell)}{\epsilon}}$
        is the associated triangular bump supported on $\overline{B_\epsilon(\ell)}$.
  \item \emph{Polynomial:} The polynomial bump with exponent $p>0$  is induced by the function $\phi(x) = \ppart{x}^p$.
         The triangular bump is recovered when $p =1$,  while $p=2$ yields the quadratic bump.
  \item \emph{Gaussian:} The Gaussian bump is induced by the funcion
        \[ \phi(x) = \left\{
                               \begin{array}{cl}
                                 e^{-1/x^2} & \hbox{if } x > 0 \\
                                 0 & \hbox{ else}
                               \end{array}
        \right.\]
  \item \emph{Logarithmic:} Is the one associated to $\phi(x) = \log(1 + \ppart{x})$
\end{itemize}
Figure \ref{fig:bump_functions} shows some of these bump functions, with weight $\lambda =1$, for $B_1(0)$.

\begin{figure}[!ht]
  \centering
  \includegraphics[width = 0.8\textwidth]{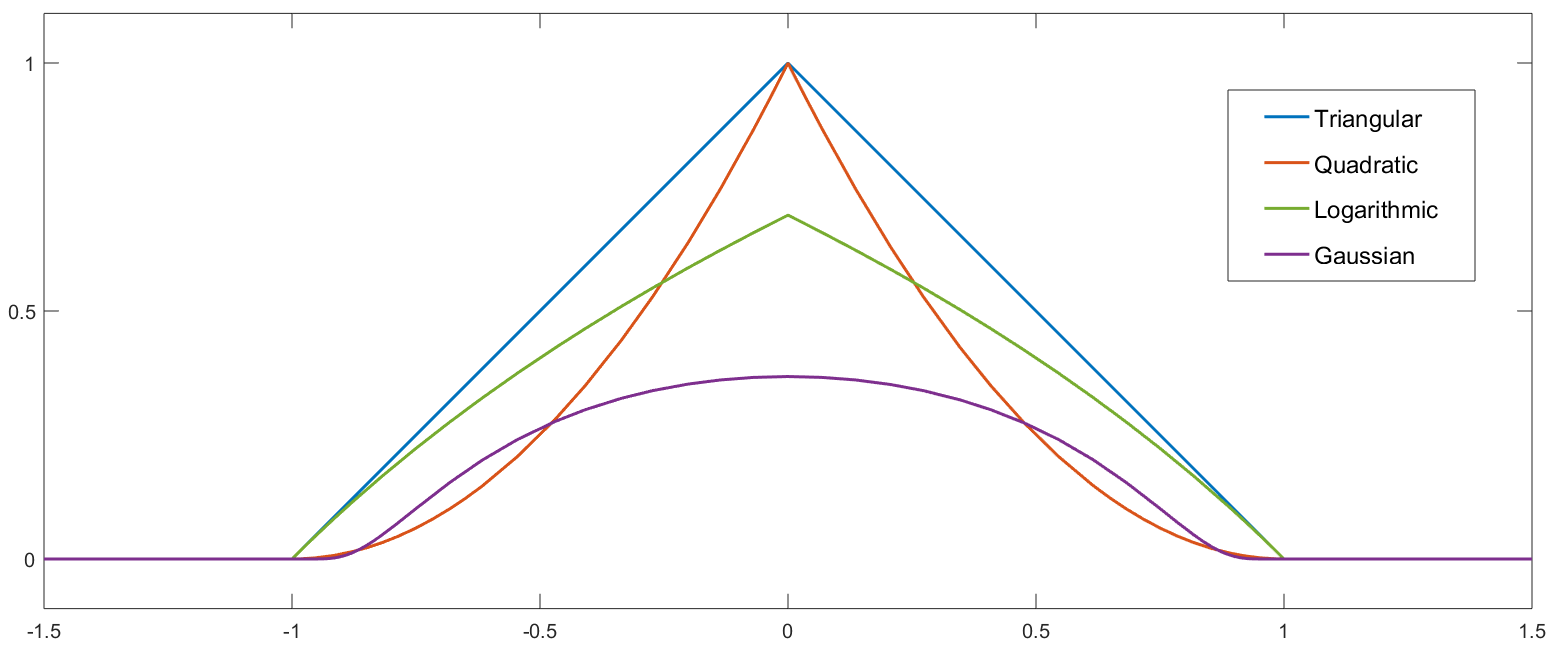}
  \caption{Examples of bump functions supported on $\overline{B_1(0)} \subset \R$. Please refer to an online version for colors}\label{fig:bump_functions}
\end{figure}

Choosing $\phi(x) = \ppart{x}^2$ and the weights as $\lambda_r = \epsilon_r^2$
simplifies Theorem \ref{thm:ClassifyingMapFormula1} to:

\begin{corollary}\label{coro:ClassifyingMapFormula1c}
Let $(B,\mathbf{d})$ be a metric space and
$\mathcal{B} = \{B_r = B_{\epsilon_r}(\ell_r)\}_{r=0}^n$
 a covering. If $\F $ is $ \R $ or $ \C$ and $\omega = \{\omega_{rt} : B_r \cap B_t \longrightarrow \F^\times\}$ are continuous maps satisfying the
cocycle condition, then $f_\omega : B \longrightarrow \FP^n$
given in homogeneous coordinates by
\[
f_\omega(b) =
\Big[
\omega_{0j}(b)\cdot
\ppart{\epsilon_0 - \mathbf{d}(b,\ell_0)}:
\cdots:
\omega_{nj}(b)\cdot
\ppart{\epsilon_n - \mathbf{d}(b,\ell_n)}
\Big] \;\; , \;\;\; b\in B_j
\]
is well-defined and classifies the $\F$-line bundle $\zeta_\omega$
induced by $(\mathcal{B}, \omega)$.
\end{corollary}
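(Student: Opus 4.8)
The plan is to derive Corollary~\ref{coro:ClassifyingMapFormula1c} from Theorem~\ref{thm:ClassifyingMapFormula1} by exhibiting the displayed formula as the special case obtained from a particular choice of bump function and weights, together with a homogeneity argument to simplify the square roots. First I would invoke the Proposition immediately preceding the corollary: with $\phi(x) = \ppart{x}^2$ (the quadratic bump, which indeed satisfies $\phi^{-1}(0) = \R_{\leq 0}$ since $\phi$ is continuous, nonnegative, and vanishes exactly when $x \leq 0$) and weights $\lambda_r = \epsilon_r^2$, the resulting
\[
\varphi_r(b) = \frac{\epsilon_r^2 \cdot \ppart{1 - \mathbf{d}(b,\ell_r)/\epsilon_r}^2}{\sum_{t=0}^n \epsilon_t^2 \cdot \ppart{1 - \mathbf{d}(b,\ell_t)/\epsilon_t}^2}
\]
is a partition of unity for $B$ dominated by $\mathcal{B}$. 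Hence Theorem~\ref{thm:ClassifyingMapFormula1} applies verbatim: the map $f_\omega : B \longrightarrow \FP^n$ given in homogeneous coordinates on $B_j$ by $[\,\omega_{0j}(b)\sqrt{\varphi_0(b)} : \cdots : \omega_{nj}(b)\sqrt{\varphi_n(b)}\,]$ is well-defined and classifies $\zeta_\omega$.

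Next I would carry out the simplification. The key point is that homogeneous coordinates are unchanged under multiplication of all entries by a common nonzero scalar. For fixed $b \in B_j$, the common denominator $D(b) := \sum_t \epsilon_t^2 \ppart{1 - \mathbf{d}(b,\ell_t)/\epsilon_t}^2$ is strictly positive (since $\mathcal{B}$ is a cover, some summand is positive), so $\sqrt{\varphi_r(b)} = \epsilon_r \ppart{1 - \mathbf{d}(b,\ell_r)/\epsilon_r} / \sqrt{D(b)}$, where I have used that $\ppart{\,\cdot\,} \geq 0$ to pull the square root inside. Clearing the factor $1/\sqrt{D(b)}$, the $r$-th homogeneous coordinate becomes $\omega_{rj}(b)\cdot \epsilon_r \ppart{1 - \mathbf{d}(b,\ell_r)/\epsilon_r} = \omega_{rj}(b) \cdot \ppart{\epsilon_r - \mathbf{d}(b,\ell_r)}$, since $\epsilon_r > 0$ lets us absorb $\epsilon_r$ into the positive part: $\epsilon_r \max\{1 - \mathbf{d}(b,\ell_r)/\epsilon_r, 0\} = \max\{\epsilon_r - \mathbf{d}(b,\ell_r), 0\}$. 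This is exactly the displayed formula, so $f_\omega$ of the corollary agrees pointwise with $f_\omega$ of the theorem, and therefore inherits well-definedness and the classifying property.

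One small matter to address for rigor: the scalar $1/\sqrt{D(b)}$ varies with $b$, so strictly speaking one should note that rescaling homogeneous coordinates by a \emph{continuous} nowhere-zero function of $b$ produces the same map $B \to \FP^n$ — this is immediate because the projection $\F^{n+1}\smallsetminus\{\mathbf{0}\} \to \FP^n$ identifies points on the same line, and the two lifts $b \mapsto (\omega_{rj}(b)\sqrt{\varphi_r(b)})_r$ and $b \mapsto (\omega_{rj}(b)\ppart{\epsilon_r - \mathbf{d}(b,\ell_r)})_r$ differ by the positive scalar function $\sqrt{D(b)}$ at each point and hence descend to the identical set-map into $\FP^n$. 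I would also remark that neither lift is the zero vector at any $b$: there is always an index $r$ with $b \in B_r$, i.e. $\mathbf{d}(b,\ell_r) < \epsilon_r$, forcing the $r$-th coordinate to be nonzero (as $\omega_{rj}(b) \in \F^\times$).

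\textbf{The main obstacle} is essentially bookkeeping rather than conceptual: one must be careful that the ``independence of $j$'' clause transfers correctly. But this requires no new work — the well-definedness is asserted by Theorem~\ref{thm:ClassifyingMapFormula1} for the square-root formula, and since the corollary's formula is obtained from it by the pointwise positive rescaling above (a rescaling that does not depend on $j$), independence of $j$ is preserved automatically. Thus the entire proof amounts to: (i) verify $\phi(x) = \ppart{x}^2$ satisfies the hypothesis of the Proposition; (ii) plug in $\lambda_r = \epsilon_r^2$; (iii) simplify $\sqrt{\varphi_r}$ using positivity and clear the common denominator. No genuinely hard step arises.
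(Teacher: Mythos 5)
Your proposal is correct and follows exactly the route the paper intends: the corollary is obtained from Theorem \ref{thm:ClassifyingMapFormula1} by taking the dominated partition of unity from the preceding Proposition with $\phi(x)=\ppart{x}^{2}$ and $\lambda_r=\epsilon_r^{2}$, then observing that $\sqrt{\varphi_r(b)}=\ppart{\epsilon_r-\mathbf{d}(b,\ell_r)}/\sqrt{D(b)}$ and clearing the common positive factor in homogeneous coordinates. The paper leaves this computation implicit; your write-up simply supplies the details, including the (correct) remarks that the rescaling is by a nowhere-zero scalar independent of $j$ and that the lift is never the zero vector.
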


\subsection*{Geometric Interpretation}\label{subsection:Geometric Interpretation}
Let us clarify   (\ref{eq:classMapFormula_final}) for the
case of constant transition functions and $\F = \R$.
If $\mathcal{U} = \{U_0,\ldots, U_n\}$ is a cover of $B$, then
the \emph{nerve of} $\mathcal{U}$ --- denoted $\mathcal{N}(\mathcal{U})$ ---
is the abstract simplicial complex with one vertex for each open set
$U_r\in \mathcal{U}$, and a simplex $\{r_0,\ldots, r_k\}$ for
each collection $U_{r_0},\ldots, U_{r_k} \in \mathcal{U}$ such that
\[U_{r_0} \cap \cdots \cap U_{r_k} \neq \emptyset\]
Given a  geometric realization  $|\mathcal{N}(\mathcal{U})| \subset  \R^{2n+1}$,
 let $\vv_r \in |\mathcal{N}(\mathcal{U})|$ be the point corresponding
to the vertex $r\in \mathcal{N}(\mathcal{U})$.
Each  $\xx \in |\mathcal{N}(\mathcal{U})|$ is then
 uniquely determined by (and uniquely determines) its barycentric coordinates:
a sequence $\{x_r\}$ of real numbers between 0 and 1,
one for each open set $U_r\in \mathcal{U}$, so that
\[
\sum\limits_r x_r = 1
\;\;\;\;
\mbox{ and }
\;\;\;\;
\sum_{r} x_r\vv_r = \xx
\]
To see this, notice that given a non-vertex  $\xx \in |\mathcal{N}(\mathcal{U})|$
there exists a unique maximal geometric simplex $\sigma$ of $|\mathcal{N}(\mathcal{U})|$
so that $\xx$ is in the interior of $\sigma$.
If $\vv_{r_0}, \ldots, \vv_{r_k} \in |\mathcal{N}(\mathcal{U})|$
are the vertices of $\sigma$, then $\xx$ can be expressed uniquely
as a convex combination of $\vv_{r_0},\ldots, \vv_{r_k}$ which determines $x_{r_0}, \ldots, x_{r_k}$.
 If $r \neq r_0,\ldots, r_k$, then we let $x_r = 0$.

A   partition of unity  $\{\varphi_r\}$ dominated  by
$\mathcal{U}$  induces a continuous map
\begin{equation}\label{eq:phiMapNerveRealization}
\begin{array}{rccc}
\varphi: & B &\longrightarrow &|\mathcal{N}(\mathcal{U})| \\[.1cm]
& b & \mapsto & \sum\limits_r \varphi_r(b)\vv_r
\end{array}
\end{equation}
That is, $\varphi$ sends $b$ to the point $\varphi(b)$ with
barycentric coordinates $\varphi_r(b)$.
Moreover, if $\big\{\omega_{rt} : U_r \cap U_t  \longrightarrow \{-1,1\}  \big\}$
is a collection of \underline{constant} functions  satisfying the cocycle condition,
then the associated classifying map $f_\omega : B \longrightarrow \RP^n$ from Theorem \ref{thm:ClassifyingMapFormula1} can be decomposed as
\[
\begin{tikzcd}[column sep = scriptsize]
B \arrow{r}{\varphi} & \,|\mathcal{N}(\mathcal{U})|\, \arrow{r}{F_\omega} & \RP^n
\end{tikzcd}
\]
where $F_\omega : |\mathcal{N}(\mathcal{U})| \longrightarrow \RP^n$, in barycentric coordinates,
is given on the open star of a vertex $\vv_j \in |\mathcal{N}(\mathcal{U})|$ as
\[
F_\omega (x_0,\ldots, x_n) = \Big[\omega_{0j}\sqrt{x_0} : \cdots : \omega_{nj}\sqrt{x_n}\Big]
\]


\medskip
\noindent \underline{Example:} Let $ B = S^1$, the unit circle, and let
$\mathcal{U} = \{U_0, U_1, U_2\}$ be the open covering depicted in Figure \ref{fig:example_S1_coveringNerve}(left).

\begin{figure}[!ht]
\centering
\subfigure[$\mathcal{U} = \{U_0, U_1, U_2\}$]{
\includegraphics[scale = 0.2]{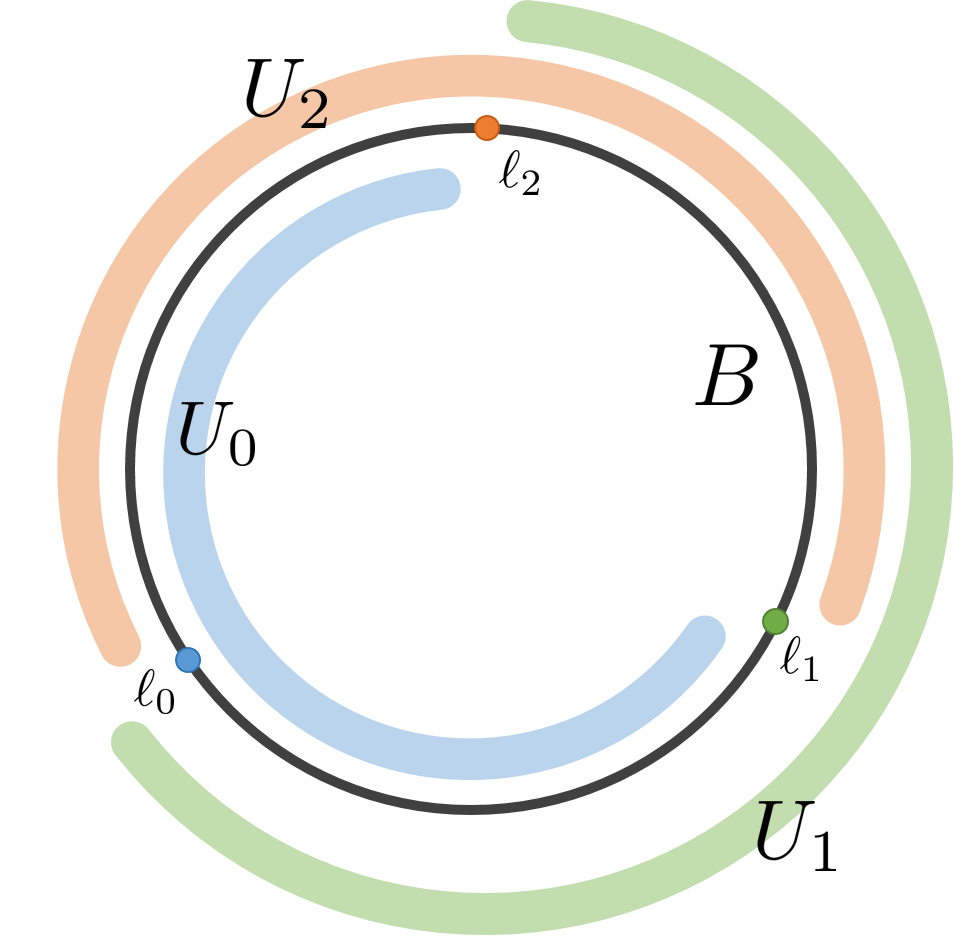}
}
\hspace{1.5cm}
\subfigure[Nerve complex $\mathcal{N}(\mathcal{U})$]{
\includegraphics[scale = 0.3]{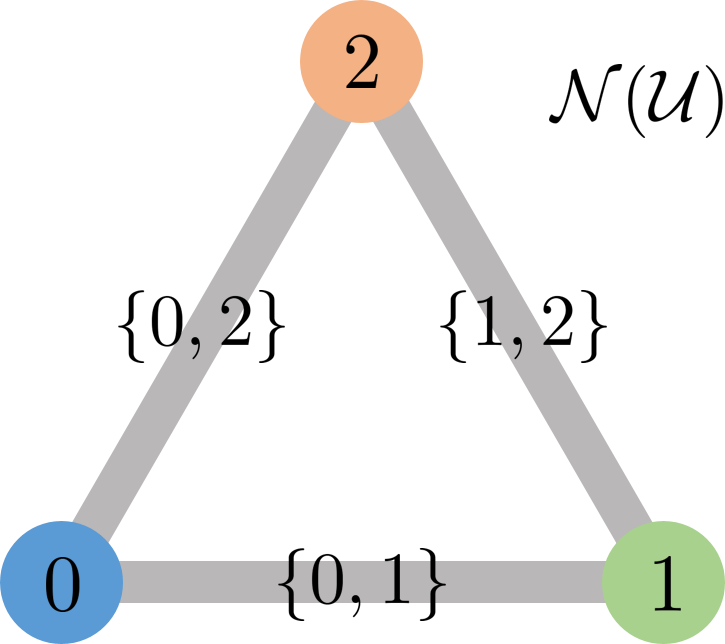}
}
\caption{An open covering of the circle and the resulting nerve complex.}
\label{fig:example_S1_coveringNerve}
\end{figure}

Define
$\omega_{rt} =1$ for  $r,t=0,1,2$; let $\omega'_{02} = \omega'_{20} = -1$
and let $\omega'_{rt} = 1$ for all $(r,t) \neq (0,2), (2,0)$.
Let $(x_0,x_1,x_2)$ denote the barycentric coordinates of a point
in $|\mathcal{N}(\mathcal{U})|$.
For instance, the vertex labeled as $0$ has coordinates $(1,0,0)$ and the midpoint of the edge $\{0,1\}$
 has coordinates $(1/2,1/2,0)$. Then
\[F_\omega(x_0,x_1,x_2) = \big[\sqrt{x_0} : \sqrt{x_1} : \sqrt{x_2}\big]\] and
for each $0 \leq x \leq 1$
\begin{eqnarray*}
F_{\omega'}(x, 1 - x,0) &=& \Big[\sqrt{x}: \sqrt{1 - x}: 0\Big] \\
F_{\omega'}(x,0,1-x) &=& \Big[\sqrt{x}: 0 : - \sqrt{1-x}\Big] \\
F_{\omega'}(0,x,1-x) &=& \Big[0:\sqrt{x}: \sqrt{1-x}\Big]
\end{eqnarray*}
We show in Figure \ref{fig:example_S1_RP2coordinates} how $F_\omega$ and $F_{\omega'}$ map $|\mathcal{N}(\mathcal{U})|$
to $\RP^2 = S^2 /(\xx \sim - \xx)$.

\begin{figure}[!ht]
\centering
\subfigure{
\includegraphics[scale = 0.3]{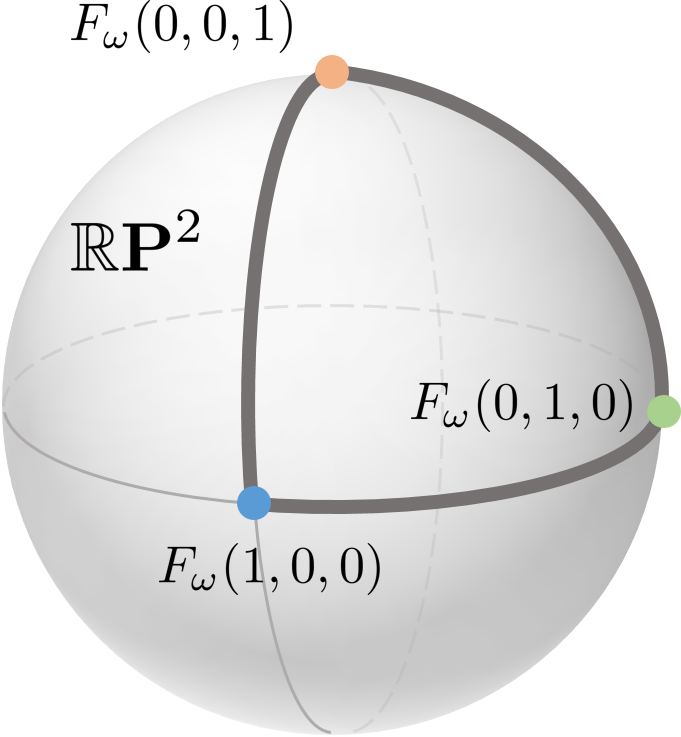}
}
\hspace{2.2cm}
\subfigure{
\includegraphics[scale = 0.3]{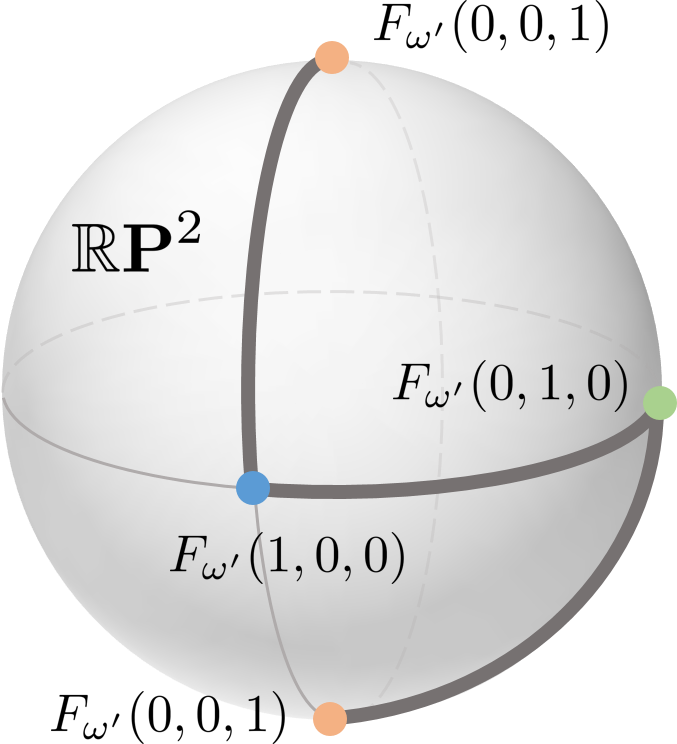}
}
\caption{Image of $F_\omega$ (left) and $F_{\omega'}$ (right) in $\RP^2 = S^2/(\uu \sim - \uu)$.}
\label{fig:example_S1_RP2coordinates}
\end{figure}

If $\mathbf{d}$ is the geodesic distance on $S^1$,
then each arc $U_r$ is an open ball $B_{\epsilon_r}(\ell_r)$
for some $(\epsilon_r,\ell_r)\in \R^{+} \times S^1$.
When   $\varphi: S^1 \longrightarrow |\mathcal{N}(\mathcal{U})| $ is
induced by the  partition of unity from the triangular bumps $\phi(x) = \ppart{x}$,
then $\varphi$
 maps each arc $\{\ell_r, \ell_t\}$ linearly onto the edge $|\{r,t\}|$.
It is not hard to see that the $\R$-line bundle induced
by $\{\omega_{rt}\}$ is trivial,
 while  the one induced by $\{\omega'_{rt}\}$
is the nontrivial bundle on $S^1$ having the Moebius band
as total space.
This is captured  by $f_\omega = F_\omega \circ\varphi : S^1 \longrightarrow \RP^2$
being null-homotopic and  $f_{\omega'} = F_{\omega'} \circ \varphi$
representing the nontrivial element in the fundamental group $\pi_{1}(\RP^2) \cong \Z/2$.

\section{Transition Functions from Simplicial Cohomology}\label{sec:TransitionFuncFromSimplicialCohomology}
Let $\mathcal{U} = \{U_r\}$ be a cover for  $B$.
We have shown thus far that given a
collection $\omega = \{\omega_{rt}\}$ of continuous maps
\[
\omega_{rt} : U_r \cap U_t \longrightarrow \mathsf{GL}_1(\F) = \F^\times \; ,\;\; U_r \cap U_t \neq \emptyset
\]
satisfying the cocycle condition,
one can explicitly write down (given a dominated partition of unity)
a classifying map $f_\omega: B \longrightarrow \FP^n$ for the associated $\F$-line bundle
$\zeta_\omega$.
What we will see next is that determining such transition functions can be reduced
to a computation in simplicial cohomology.

\subsection*{Formulation in Terms of  Sheaf Cohomology}
If $\mathscr{C}_\F^\times$ denotes the sheaf of continuous   $\F^\times$-valued functions on $B$,
then one has that $\omega\in \check{C}^1(\mathcal{U};\mathscr{C}_\F^\times)$.
Moreover, since $\omega$ satisfies the cocycle condition, then $\omega \in \check{Z}^1(\mathcal{U};\mathscr{C}_\F^\times)$,
and hence we can consider the cohomology class $[\omega] \in \check{H}^1(\mathcal{U};\mathscr{C}_\F^\times)$.
If $\mathcal{V}$ is another covering of $B$ we say
that $\mathcal{V}$ is a refinement of $\mathcal{U}$, denoted
$\mathcal{V} \prec \mathcal{U}$,
if for every $V \in \mathcal{V}$ there exists $U \in \mathcal{U}$
so that $V\subset U$.
A standard result (Exercise 6.2, \cite{bott2013differential}) is the following:

\begin{lemma} If $\omega, \omega' \in \check{Z}^1(\mathcal{U};\mathscr{C}_\F^\times)$ are cohomologous,
then $\zeta_\omega \cong \zeta_{\omega'}$.
Moreover, the function
\[
\begin{array}{ccc} \check{H}^1(\mathcal{U};\mathscr{C}_\F^\times) &\longrightarrow &\mathsf{Vec}^1_\F(B)\\[.1cm]
[\,\omega\,] &\mapsto& [\,\zeta_\omega\,]
\end{array}
\]
is an injective homomorphism,
and  natural with respect to refinements of $\mathcal{U}$.
\end{lemma}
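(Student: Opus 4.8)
The plan is to establish the four assertions --- invariance of $[\omega]\mapsto[\zeta_\omega]$ under the choice of representative, the homomorphism property, injectivity, and naturality under refinement --- in that order, in each case by writing down an explicit bundle map out of the gluing data and checking that it respects the equivalence relations defining the induced bundles. Unwinding the definition of $\check{H}^1$, the statement that $\omega$ and $\omega'$ are cohomologous means there is a $0$-cochain $\eta=\{\eta_r:U_r\to\F^\times\}$ with $\omega'_{rt}(b)=\eta_r(b)\,\omega_{rt}(b)\,\eta_t(b)^{-1}$ on $U_r\cap U_t$ (writing the groups $\mathscr{C}_\F^\times(U)$ multiplicatively, and absorbing the sign/inverse of the coboundary convention into $\eta$). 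Define $T:E(\omega)\to E(\omega')$ on representatives by $T[b,r,v]=[b,r,\eta_r(b)\,v]$. I would verify that $T$ is compatible with the two gluing relations $(b,r,v)\sim(b,t,\omega_{rt}(b)^{-1}v)$ and $(b,r,v')\sim'(b,t,\omega'_{rt}(b)^{-1}v')$ --- a one-line computation from the cocycle relation above --- so that $T$ descends to a continuous bijection which is fiberwise $\F$-linear and commutes with the projections, with inverse built from $\eta_r^{-1}$. Hence $\zeta_\omega\cong\zeta_{\omega'}$, and the assignment descends to a well-defined map $\check{H}^1(\mathcal{U};\mathscr{C}_\F^\times)\to\mathsf{Vect}_\F^1(B)$.

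For the homomorphism property, recall that the operation on $\check{H}^1(\mathcal{U};\mathscr{C}_\F^\times)$ is pointwise multiplication of cocycles while the operation on $\mathsf{Vect}_\F^1(B)$ is fiberwise tensor product, so I must show $\zeta_\omega\otimes\zeta_{\omega'}\cong\zeta_{\omega\cdot\omega'}$. Using the canonical local trivializations $\rho_r$ of $\zeta_\omega$ and $\rho'_r$ of $\zeta_{\omega'}$ from the Preliminaries, the products $\rho_r\otimes\rho'_r$ trivialize $\zeta_\omega\otimes\zeta_{\omega'}$, and for one-dimensional fibers the associated transition scalar over $U_r\cap U_t$ is exactly $\omega_{rt}\cdot\omega'_{rt}$, since tensoring two scalar changes of basis multiplies them. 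Invoking the standard fact that a vector bundle is isomorphic to the one reconstructed from any of its transition cocycles relative to a fixed cover then gives $\zeta_\omega\otimes\zeta_{\omega'}\cong\zeta_{\omega\cdot\omega'}$; the trivial cocycle maps to $\varepsilon_1$, so the map is a group homomorphism.

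For injectivity it suffices, the map being a homomorphism, to show that $\zeta_\omega\cong\varepsilon_1$ forces $[\omega]=0$. From an isomorphism $T:E(\omega)\to B\times\F$, compose with the canonical trivializations $\rho_r$ to obtain continuous maps $\eta_r:U_r\to\F^\times$ characterized by $T(\rho_r(b,v))=(b,\eta_r(b)\,v)$; one uses here that $T$ is a fiberwise linear homeomorphism to see that $\eta_r$ is $\F^\times$-valued and continuous. Comparing the two descriptions of $T$ on $\rho_r$ and $\rho_t$ over $U_r\cap U_t$ together with the relation $\rho_r^{-1}\rho_t(b,v)=(b,\omega_{rt}(b)^{-1}v)$ yields $\omega_{rt}=\eta_r\cdot\eta_t^{-1}$ for all $r,t$, i.e. $\omega$ is a coboundary, so $[\omega]=0$.

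Finally, for naturality under a refinement $\mathcal{V}\prec\mathcal{U}$, choose a refinement map $\lambda$ sending each $V\in\mathcal{V}$ to some $\lambda(V)\in\mathcal{U}$ with $V\subset\lambda(V)$; this induces $\lambda^\sharp:\check{H}^1(\mathcal{U};\mathscr{C}_\F^\times)\to\check{H}^1(\mathcal{V};\mathscr{C}_\F^\times)$ sending $[\omega]$ to the class of $\{\omega_{\lambda(V)\lambda(W)}|_{V\cap W}\}$, which I would check is independent of the choice of $\lambda$. Commutativity of the resulting square with the two bundle maps follows by exhibiting $S:E(\lambda^\sharp\omega)\to E(\omega)$, $S[b,V,v]=[b,\lambda(V),v]$, verifying it respects the gluing relations (immediate from the definition of $\lambda^\sharp\omega$) and is an isomorphism. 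The main obstacle is the injectivity step: the other three parts hand me explicit cochains and ask only for verification of identities, whereas here I must manufacture the $0$-cochain $\eta$ from an abstract isomorphism and then confirm, with the sign and inverse conventions held consistent throughout, that $\omega$ is literally a coboundary; fixing the coboundary convention for $\delta^0$ at the outset is what makes this bookkeeping manageable.
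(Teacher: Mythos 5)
The paper does not actually prove this lemma: it is quoted as a standard result and deferred to Exercise 6.2 of Bott--Tu, so there is no internal proof to compare against. Your proposal supplies exactly the standard argument that the citation points to, and it is correct: (i) the explicit map $T[b,r,v]=[b,r,\eta_r(b)v]$ between the glued total spaces, whose compatibility with the two equivalence relations is precisely the cohomologous-cocycle identity; (ii) the multiplicativity of transition scalars under fiberwise tensor product, together with the standard fact that a bundle is isomorphic to the one reconstructed from its transition cocycle over a fixed cover; (iii) extraction of a $0$-cochain $\{\eta_r\}$ from a trivialization $T:E(\omega)\to B\times\F$ and the verification that $\omega$ is a coboundary; (iv) the refinement map $S[b,V,v]=[b,\lambda(V),v]$ realizing $\zeta_{\lambda^\sharp\omega}\cong\zeta_\omega$. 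Two small points you should make explicit when writing this up. First, with the paper's conventions ($\rho_r^{-1}\rho_t(b,v)=(b,\omega_{rt}(b)v)$ and gluing $(b,r,v)\sim(b,t,\omega_{rt}(b)^{-1}v)$), the injectivity computation actually yields $\omega_{rt}=\eta_t/\eta_r$ rather than $\eta_r\cdot\eta_t^{-1}$; this is harmless, since replacing $\eta$ by $\eta^{-1}$ shows $\omega$ is a coboundary either way, but the convention should be pinned down once and used consistently, as you yourself anticipate. Second, in steps (i) and (iv) you should record the standard justification that a continuous bundle map over the identity which is a linear isomorphism on each fiber is automatically a bundle isomorphism (continuity of the inverse follows from local triviality); you assert the inverse exists but this is the one place where something beyond a pointwise check is needed.
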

Here natural means that if
$H^\mathcal{U}_\mathcal{V} :
\check{H}^1(\mathcal{U};\mathscr{C}_\F^\times)
\longrightarrow
\check{H}^1(\mathcal{V};\mathscr{C}_\F^\times)$
is the homomorphism induced by the refinement
$\mathcal{V} \prec \mathcal{U}$
(see \cite[Chapter IX, Lemma 3.10]{miranda1995algebraic}),
then the diagram
\[
\begin{tikzcd}[row sep = 0.2em]
\check{H}^1(\mathcal{U};\mathscr{C}_\F^\times) \arrow{dd}\arrow{rd}
\\
& \mathsf{Vect}_\F^1(B)
\\
\check{H}^1(\mathcal{V};\mathscr{C}_\F^\times) \arrow{ru}
\end{tikzcd}
\]
is commutative. Combining this with equation (\ref{eq:VectToMaps})
yields
\begin{corollary}  Let $\mathcal{U}$ be an open covering of $B$. Then the function
\[
\begin{array}{rccc}
\Gamma_{\mathcal{U}}: &\check{H}^1(\mathcal{U};\mathscr{C}_\F^\times)& \longrightarrow & \big[ B\, , \, \FP^\infty \big] \\[.1cm]
& [\omega] & \mapsto & [f_\omega]
\end{array}
\]
is injective, and natural with respect to refinements of $\mathcal{U}$.
\end{corollary}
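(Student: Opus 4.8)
The plan is to combine the two facts already established — the injective homomorphism $\check{H}^1(\mathcal{U};\mathscr{C}_\F^\times) \hookrightarrow \mathsf{Vect}_\F^1(B)$ of the Lemma, and the bijection $\mathsf{Vect}_\F^1(B) \xrightarrow{\;\cong\;} [B,\FP^\infty]$ of equation (\ref{eq:VectToMaps}) (which is Theorem \ref{thm:BundleClassification} with $k=1$, using $\FP^\infty = \mathsf{Gr}_1(\F^\infty)$) — and then verify that the composite is exactly the map $\Gamma_\mathcal{U}$ as defined, i.e. that $\Gamma_\mathcal{U}([\omega]) = [f_\omega]$ with $f_\omega$ the explicit map from Theorem \ref{thm:ClassifyingMapFormula1}.

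\begin{proof}
By the Lemma, the assignment $[\omega] \mapsto [\zeta_\omega]$ is a well-defined injective homomorphism $\check{H}^1(\mathcal{U};\mathscr{C}_\F^\times) \to \mathsf{Vect}_\F^1(B)$, natural with respect to refinements. Composing with the bijection (\ref{eq:VectToMaps}) in the case $k=1$, $\mathsf{Vect}_\F^1(B) \xrightarrow{\ \cong\ } [B,\mathsf{Gr}_1(\F^\infty)] = [B,\FP^\infty]$, $[\zeta]\mapsto[f_\zeta]$, produces an injective map $[\omega] \mapsto [f_{\zeta_\omega}]$. It remains to identify this composite with $\Gamma_\mathcal{U}$; that is, to show $[f_{\zeta_\omega}] = [f_\omega]$, where $f_\omega : B \to \FP^n \hookrightarrow \FP^\infty$ is the explicit formula (\ref{eq:classMapFormula_final}). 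This is precisely the content of Theorem \ref{thm:ClassifyingMapFormula1}: the map $f_\omega$ classifies the line bundle $\zeta_\omega$ induced by $(\mathcal{U},\omega)$, i.e. $f_\omega^*\gamma^1_\infty \cong \zeta_\omega$, and by the uniqueness clause of Theorem \ref{thm:BundleClassification} any such map is homotopic to $f_{\zeta_\omega}$. Hence $\Gamma_\mathcal{U}([\omega]) = [f_\omega] = [f_{\zeta_\omega}]$, and injectivity of $\Gamma_\mathcal{U}$ follows from injectivity of both factors. Naturality with respect to a refinement $\mathcal{V}\prec\mathcal{U}$ follows by stacking the commuting triangle of the Lemma (relating $H^\mathcal{U}_\mathcal{V}$ to $\mathsf{Vect}_\F^1(B)$) on top of the (trivially commuting) identity on $[B,\FP^\infty]$ attached via the bijection (\ref{eq:VectToMaps}).
\end{proof}

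\textbf{The main obstacle} is not a deep one but a bookkeeping point: one must be careful that the map $\Gamma_\mathcal{U}$ is claimed to land in $[B,\FP^\infty]$ whereas Theorem \ref{thm:ClassifyingMapFormula1} produces $f_\omega$ into $\FP^n$; the resolution is simply to post-compose with the standard inclusion $\FP^n \hookrightarrow \FP^\infty$, under which $f_\omega^*\gamma^1_\infty \cong f_\omega^*\gamma^1_n \cong \zeta_\omega$ by functoriality of pullback, so the homotopy class in $[B,\FP^\infty]$ is well-defined and agrees with $f_{\zeta_\omega}$. The only other thing worth double-checking is that the statement asks for injectivity of $\Gamma_\mathcal{U}$ as a function, not as a homomorphism, so no group structure on $[B,\FP^\infty]$ need be invoked — though one could note that (\ref{eq:VectToMaps}) does carry the tensor-product group structure on $\mathsf{Vect}_\F^1(B)$ to a natural group structure on $[B,\FP^\infty]$, making $\Gamma_\mathcal{U}$ a monomorphism of abelian groups as well.
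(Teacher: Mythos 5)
Your proof is correct and follows exactly the route the paper intends: the paper's entire justification is the single sentence ``Combining this with equation (\ref{eq:VectToMaps}) yields'' the corollary, i.e.\ composing the injective natural homomorphism of the Lemma with the bijection $\mathsf{Vect}^1_\F(B) \cong [B,\FP^\infty]$, which is precisely your argument. Your added bookkeeping — identifying the composite with the explicit $f_\omega$ of Theorem \ref{thm:ClassifyingMapFormula1} via the uniqueness clause of Theorem \ref{thm:BundleClassification}, and handling the inclusion $\FP^n \hookrightarrow \FP^\infty$ — is a correct and welcome elaboration of what the paper leaves implicit.
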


The sheaf cohomology group $\check{H}^1(\mathcal{U};\mathscr{C}_\F^\times)$
can be replaced, under suitable conditions, by  simplicial cohomology groups:
$H^1(\mathcal{N}(\mathcal{U});\Z/2)$ when $\F = \R$, and $H^2(\mathcal{N}(\mathcal{U});\Z)$
when $\F = \C$.
We will not assume that $\mathcal{U}$ is a good cover (i.e. that each finite intersection $U_{r_0} \cap \ldots \cap U_{r_k}$ is either empty or contractible), but rather
will phrase the reduction theorems in terms of the relevant
connectivity conditions.
The construction is described next.

\subsection*{Reduction to Simplicial Cohomology}
If $\tau \in Z^1(\mathcal{N}(\mathcal{U});\Z/2)$,
then for each $U_r \cap U_t \neq \emptyset $ we have $\tau(\{r ,t\}) = \tau_{rt}\in \{0,1\} = \Z/2$.
Let $\phi^\tau = \{\phi^\tau_{rt}\}$ be the collection of constant functions
\[
\begin{array}{rccc}
\phi_{rt}^\tau : & U_r \cap U_t & \longrightarrow & \R^\times \\
& y & \mapsto & (-1)^{\tau_{rt}}
\end{array}
\]
Therefore each $\phi^\tau_{rt}$ is continuous, so
$\phi^\tau \in \check{C}^1(\mathcal{U};\mathscr{C}_\R^\times)$,
and since $\tau$ is a cocycle it follows that
$\phi^\tau \in \check{Z}^1(\mathcal{U}; \mathscr{C}_\R^\times)$.
Moreover, the association $\tau \mapsto \phi^\tau$  induces the homomorphism
\[
\begin{array}{rccc}
\Phi_\mathcal{U} : &H^1(\mathcal{N}(\mathcal{U}); \Z/2)& \longrightarrow &\check{H}^1(\mathcal{U};\mathscr{C}_\R^\times) \\
& [\tau] & \mapsto &\left[\phi^\tau\right]
\end{array}
\]
which is well-defined and satisfies:
\begin{proposition}\label{prop:Simplicial2CechReal}
\[\Phi_\mathcal{U}: H^1(\mathcal{N}(\mathcal{U});\Z/2) \longrightarrow \check{H}^1(\mathcal{U};\mathscr{C}^\times_\R)
\]
is natural with respect to refinements.
Moreover, if each $U_r$ is connected, then $\Phi_\mathcal{U}$ is injective.
\end{proposition}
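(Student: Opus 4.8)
The plan is to establish the two assertions separately. For naturality with respect to refinements, I would unwind the definitions of the three relevant refinement maps: the map on simplicial cohomology induced by a simplicial map $\mathcal{N}(\mathcal{V}) \to \mathcal{N}(\mathcal{U})$ subordinate to a choice function $V \mapsto U(V)$ with $V \subset U(V)$, the \v{C}ech refinement map $H^{\mathcal{U}}_{\mathcal{V}}$ on sheaf cohomology, and the map $\tau \mapsto \phi^\tau$. The key observation is that all three are built from the \emph{same} choice function. If $\tau \in Z^1(\mathcal{N}(\mathcal{U}); \Z/2)$ and we refine along $V \mapsto U(V)$, then the pulled-back cocycle on $\mathcal{N}(\mathcal{V})$ assigns to the edge $\{a,b\}$ the value $\tau_{U(V_a)U(V_b)}$, and the associated constant function on $V_a \cap V_b$ is $y \mapsto (-1)^{\tau_{U(V_a)U(V_b)}}$; on the other hand, the \v{C}ech refinement of $\phi^\tau$ restricts $\phi^\tau_{U(V_a)U(V_b)}$ from $U(V_a)\cap U(V_b)$ to $V_a \cap V_b$, which is exactly the same constant function. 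Hence the square commutes already at the cochain level, and a fortiori on cohomology.

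For injectivity when each $U_r$ is connected, I would argue contrapositively: suppose $\tau \in Z^1(\mathcal{N}(\mathcal{U}); \Z/2)$ with $\Phi_{\mathcal{U}}([\tau]) = 0$, so there is a $0$-cochain $\{\mu_r\} \in \check{C}^0(\mathcal{U}; \mathscr{C}_\R^\times)$, i.e. continuous maps $\mu_r : U_r \to \R^\times$, with $\phi^\tau_{rt} = \mu_t / \mu_r$ on each nonempty $U_r \cap U_t$. Since $U_r$ is connected and $\R^\times$ has two connected components, $\mathrm{sign}(\mu_r)$ is constant on $U_r$; call it $\nu_r \in \{-1,1\}$, and write $\nu_r = (-1)^{\sigma_r}$ with $\sigma_r \in \Z/2$. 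Taking signs in $\phi^\tau_{rt} = \mu_t/\mu_r$ gives $(-1)^{\tau_{rt}} = (-1)^{\sigma_t - \sigma_r}$ on $U_r \cap U_t$, i.e. $\tau_{rt} = \sigma_t + \sigma_r$ in $\Z/2$ for every edge $\{r,t\}$ of $\mathcal{N}(\mathcal{U})$. That says precisely $\tau = \delta^0 \sigma$ for the $0$-cochain $\sigma = \{\sigma_r\} \in C^0(\mathcal{N}(\mathcal{U}); \Z/2)$, so $[\tau] = 0$ in $H^1(\mathcal{N}(\mathcal{U}); \Z/2)$. (That $\Phi_{\mathcal{U}}$ is a well-defined homomorphism — that $\tau \mapsto \tau'$ cohomologous forces $\phi^\tau, \phi^{\tau'}$ cohomologous in the \v{C}ech complex — follows from the same sign-bookkeeping applied in reverse, and should already have been recorded when $\Phi_{\mathcal{U}}$ was introduced, so I would only cite it.)

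I do not expect a serious obstacle here; the content is entirely the observation that $\mathrm{sign} : \R^\times \to \Z/2$ turns the multiplicative \v{C}ech coboundary relation into the additive simplicial one, and that connectedness of each $U_r$ is exactly what is needed to push $\mathrm{sign}(\mu_r)$ down to a well-defined vertex label $\sigma_r$. The one point demanding a little care is keeping the two refinement maps literally compatible — one must fix a single subordinating choice function $V \mapsto U(V)$ and check that the induced simplicial map is well-defined (if $V_a \cap V_b \neq \emptyset$ then $U(V_a)\cap U(V_b) \supset V_a \cap V_b \neq \emptyset$, so $\{U(V_a),U(V_b)\}$ is indeed a simplex of $\mathcal{N}(\mathcal{U})$), and recall the standard fact that the induced map on $H^1$ is independent of this choice, so the statement "natural with respect to refinements" is unambiguous.
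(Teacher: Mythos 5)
Your proof is correct and follows essentially the same route as the paper's: the injectivity argument via the constancy of $\mathsf{sign}(\mu_r)$ on each connected $U_r$, converting the multiplicative \v{C}ech coboundary relation into the additive simplicial one, is exactly the paper's proof. Your additional unwinding of the naturality claim (which the paper asserts without proof) is a correct and welcome supplement.
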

\begin{proof}
Fix $\tau \in Z^1(\mathcal{N}(\mathcal{U});\Z/2)$
and assume that there is a collection of continuous maps
$\{f_r : U_r \longrightarrow \R^\times\}$ for which
$(-1)^{\tau_{rt}} = \frac{f_t}{f_r}$ on $U_r \cap U_t\neq \emptyset$.
If each element of $\mathcal{U}$ is connected, then
the $f_r$'s have constat sign (either $+1$ or $-1$) in their domains, and hence we can define $\nu \in C^0(\mathcal{N}(\mathcal{U});\Z/2)$ as
\[
\nu(\{r\}) = \frac{1 - \mathsf{sign}(f_r)}{2}
\]
Therefore
$
  \tau(\{r,t\}) = \delta^0(\nu)(\{r,t\})
$
and the result follows.
\end{proof}

Define $\mathbf{w}_1^\mathcal{U}$ as the composition
\[
\begin{tikzcd}[column sep = scriptsize]
\ww_1^\mathcal{U} :
H^1(\mathcal{N}(\mathcal{U});\Z/2)
\arrow{r}{\Phi_\mathcal{U}} &
\check{H}^1(\mathcal{U};\mathscr{C}_\R^\times)
\arrow{r}{\Gamma_\mathcal{U}} &
\left[B,\RP^\infty\right]
\end{tikzcd}
\]
\begin{corollary} If each $U_r$ is  connected,
then
\[
\ww_1^\mathcal{U}:H^1(\mathcal{N}(\mathcal{U});\Z/2) \longrightarrow [B\,,\, \RP^\infty]
\]
is injective and natural with respect to refinements.
\end{corollary}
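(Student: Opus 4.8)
The plan is to observe that $\ww_1^\mathcal{U}$ is, by its very definition, the composite $\Gamma_\mathcal{U} \circ \Phi_\mathcal{U}$, and then to harvest injectivity and naturality from the two factors. First I would invoke Proposition \ref{prop:Simplicial2CechReal}: under the standing hypothesis that every $U_r$ is connected, $\Phi_\mathcal{U} : H^1(\mathcal{N}(\mathcal{U});\Z/2) \longrightarrow \check{H}^1(\mathcal{U};\mathscr{C}_\R^\times)$ is injective, and (with no connectivity assumption) it is natural with respect to refinements. Next I would invoke the Corollary immediately preceding the definition of $\ww_1^\mathcal{U}$, which asserts that $\Gamma_\mathcal{U} : \check{H}^1(\mathcal{U};\mathscr{C}_\R^\times) \longrightarrow [B,\RP^\infty]$ is injective and natural with respect to refinements; this in turn rests on the Lemma that $[\omega]\mapsto[\zeta_\omega]$ is an injective homomorphism $\check{H}^1(\mathcal{U};\mathscr{C}_\R^\times)\to\mathsf{Vect}^1_\R(B)$ composed with the bijection \eqref{eq:VectToMaps}.

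The conclusion is then purely formal. A composite of injective maps is injective, so $\ww_1^\mathcal{U}=\Gamma_\mathcal{U}\circ\Phi_\mathcal{U}$ is injective whenever each $U_r$ is connected. For naturality, fix a refinement $\mathcal{V}\prec\mathcal{U}$; the naturality square for $\Phi$ (with the vertical refinement-induced map $H^1(\mathcal{N}(\mathcal{U});\Z/2)\to H^1(\mathcal{N}(\mathcal{V});\Z/2)$ coming from a vertex map $\mathcal{N}(\mathcal{V})\to\mathcal{N}(\mathcal{U})$ subordinate to the refinement) sits directly above the naturality square for $\Gamma$ (with vertical map $H^\mathcal{U}_\mathcal{V}$ on the \v{C}ech groups), and these two squares share the common edge $\check{H}^1(\mathcal{U};\mathscr{C}_\R^\times)\to\check{H}^1(\mathcal{V};\mathscr{C}_\R^\times)$. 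Pasting the two commuting squares yields a commuting outer rectangle whose top and bottom rows are $\ww_1^\mathcal{U}$ and $\ww_1^\mathcal{V}$ respectively, which is exactly the assertion that $\ww_1$ is natural with respect to refinements.

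There is essentially no hard step here: the substantive work has already been done in Proposition \ref{prop:Simplicial2CechReal} and in the preceding Corollary. The only point I would take a sentence to pin down is that the refinement-induced map on simplicial cohomology appearing in the statement of naturality for $\Phi_\mathcal{U}$ and the refinement-induced map $H^\mathcal{U}_\mathcal{V}$ on \v{C}ech cohomology appearing in the statement for $\Gamma_\mathcal{U}$ are literally the maps that make the two squares composable — i.e. they use the same choice of refinement data — so that the rectangle can genuinely be pasted; this is immediate from the constructions but deserves to be stated explicitly. Thus the proof amounts to a two-line citation together with a short diagram chase.
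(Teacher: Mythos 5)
Your proposal is correct and matches the paper's (implicit) argument exactly: the paper defines $\ww_1^\mathcal{U}$ as the composite $\Gamma_\mathcal{U}\circ\Phi_\mathcal{U}$ and derives the corollary immediately from the injectivity and naturality of $\Phi_\mathcal{U}$ (Proposition~\ref{prop:Simplicial2CechReal}, under the connectedness hypothesis) and of $\Gamma_\mathcal{U}$ (the preceding corollary). Your extra remark about pasting the two naturality squares along the common edge $\check{H}^1(\mathcal{U};\mathscr{C}_\R^\times)\to\check{H}^1(\mathcal{V};\mathscr{C}_\R^\times)$ is a sound and welcome elaboration of a step the paper leaves unstated.
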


Let us now address  the complex case. Given $\sigma \in Z^2(\mathcal{N}(\mathcal{U}); \Z)$
let $\psi^\sigma = \{\psi^\sigma_{rst}\}$ be the collection of constant functions
\[
\begin{array}{rccc}
\psi^\sigma_{rst} :& U_r \cap U_s \cap U_t &\longrightarrow& \Z \\
&y& \mapsto & \sigma_{rst}
\end{array}
\]
Each $\psi_{rst}^\sigma$ is  locally constant
and therefore $\psi^\sigma \in \check{Z}^2(\mathcal{U}; \underline{\Z})$.
It follows that the association $\sigma \mapsto \psi^\sigma$
induces a homomorphism
\[
\begin{array}{rccc}
\Psi_\mathcal{U} : &H^2 (\mathcal{N}(\mathcal{U}); \Z)& \longrightarrow & \check{H}^2(\mathcal{U};\underline{\Z}) \\
& [\sigma] & \mapsto & \left[\psi^\sigma\right]
\end{array}
\]
which is well-defined and satisfies:
\begin{proposition}
\[\Psi_\mathcal{U}: H^2(\mathcal{N}(\mathcal{U});\Z) \longrightarrow \check{H}^2(\mathcal{U};\underline{\Z})
\]  is natural with respect to refinements.
Moreover, if  each $U_r \cap U_t$ is either empty or connected, then  $\Psi_\mathcal{U}$ is injective.
\end{proposition}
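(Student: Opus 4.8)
The plan is to realize $\Psi_\mathcal{U}$ at the cochain level and then split the argument into a formal naturality diagram chase and a connectivity-driven lifting argument for injectivity. First I would record that $\sigma \mapsto \psi^\sigma$ is a cochain map $C^\bullet(\mathcal{N}(\mathcal{U});\Z) \to \check{C}^\bullet(\mathcal{U};\underline{\Z})$: the \v{C}ech coboundary $\delta^n\psi^\sigma$ on $U_{r_0}\cap\cdots\cap U_{r_{n+1}}$ is the alternating sum of the restrictions of the constant functions $\psi^\sigma$ attached to the facets, and since restricting a constant function leaves its value unchanged this is the constant function with value $(\delta^n\sigma)(\{r_0,\ldots,r_{n+1}\})$, namely $\psi^{\delta^n\sigma}$. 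This re-proves that $\Psi_\mathcal{U}$ is a well-defined homomorphism on cohomology and makes the naturality claim essentially formal.

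For naturality, fix a refinement $\mathcal{V} = \{V_a\} \prec \mathcal{U}$ together with a refinement function $\rho$ satisfying $V_a \subset U_{\rho(a)}$. This induces a simplicial map $\mathcal{N}(\mathcal{V}) \to \mathcal{N}(\mathcal{U})$ sending $\{a_0,\ldots,a_n\}$ to $\{\rho(a_0),\ldots,\rho(a_n)\}$ --- a genuine simplex of $\mathcal{N}(\mathcal{U})$ since $\emptyset \neq V_{a_0}\cap\cdots\cap V_{a_n} \subset U_{\rho(a_0)}\cap\cdots\cap U_{\rho(a_n)}$ --- hence a map $\rho^\ast$ on simplicial cohomology, and it induces the refinement homomorphism $H^\mathcal{U}_\mathcal{V}$ on \v{C}ech cohomology. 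At the cochain level both $\Psi_\mathcal{V}\circ\rho^\ast$ and $H^\mathcal{U}_\mathcal{V}\circ\Psi_\mathcal{U}$ send $\sigma$ to the \v{C}ech cochain whose value on $V_{a_0}\cap\cdots\cap V_{a_n}$ is the constant $\sigma(\{\rho(a_0),\ldots,\rho(a_n)\})$ (understood as $0$ when the $\rho(a_i)$ are not distinct, by the alternating convention), so they coincide; passing to cohomology and invoking the standard independence of $\rho^\ast$ and $H^\mathcal{U}_\mathcal{V}$ from the choice of $\rho$ yields the commuting triangle.

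For injectivity, suppose $\sigma \in Z^2(\mathcal{N}(\mathcal{U});\Z)$ satisfies $\psi^\sigma = \delta^1\eta$ for some $\eta = \{\eta_{rt}\} \in \check{C}^1(\mathcal{U};\underline{\Z})$. Whenever $U_r \cap U_t \neq \emptyset$ it is connected by hypothesis, so the locally constant function $\eta_{rt}$ is globally constant with some value $c_{rt} \in \Z$; set $\beta(\{r,t\}) = c_{rt}$, a well-defined element of $C^1(\mathcal{N}(\mathcal{U});\Z)$. Every $2$-simplex $\{r,s,t\}$ of $\mathcal{N}(\mathcal{U})$ has $U_r\cap U_s\cap U_t \neq \emptyset$, and evaluating the identity $\psi^\sigma = \delta^1\eta$ on this triple intersection gives $\sigma(\{r,s,t\}) = c_{st} - c_{rt} + c_{rs} = (\delta^1\beta)(\{r,s,t\})$; on triples that are not simplices there is nothing to check. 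Hence $\sigma = \delta^1\beta$, so $[\sigma]=0$ and $\Psi_\mathcal{U}$ is injective. The one place that requires care --- the main obstacle, though it is bookkeeping rather than conceptual --- is reconciling vertex orderings and the fact that the \v{C}ech complex is indexed by all tuples (so degenerate tuples, and non-injective $\rho$ on a simplex's vertices, occur) with the simplicial cochain complex; this is dealt with in the usual way through the alternating normalization, which computes the same cohomology.
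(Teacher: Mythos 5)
Your proposal is correct and follows essentially the same route as the paper: the paper's entire proof is the observation that a locally constant function on a connected $U_r\cap U_t$ is constant, which is exactly how you convert the \v{C}ech $1$-cochain $\eta$ into a simplicial $1$-cochain $\beta$ with $\sigma = \delta^1\beta$. You simply spell out the cochain-level details and the naturality diagram chase that the paper leaves implicit.
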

\begin{proof}
The result is deduced from the following observation:
if  $U_r \cap U_t$ is   connected,
then any function $\mu_{rt} : U_r \cap U_t \longrightarrow \Z$
which is locally constant is in fact constant.
\end{proof}

We will now link $\check{H}^2(\mathcal{U};\underline{\Z})$
and $\check{H}^1(\mathcal{U};\mathscr{C}_\C^\times)$
using the exponential sequence
\[
\begin{tikzcd}[column sep = scriptsize]
0 \arrow{r} & \underline{\Z} \arrow{r} & \mathscr{C}_\C \arrow{r}{\exp} & \mathscr{C}_\C^\times \arrow{r} & 0
\end{tikzcd}
\]
which is given at the level of open sets $U\in \mathcal{U}$ by
\[
\begin{tikzcd}[column sep = scriptsize, row sep = .1em]
 \underline{\Z}(U) \arrow{r} &
\mathscr{C}_\C(U) \arrow{r}{\exp} & \mathscr{C}_\C^\times(U)  \\
 \eta \arrow[mapsto, shorten <= 1em, shorten >= 1em]{r}& \eta \\
  & f \arrow[mapsto , shorten <= .6em, shorten >= .6em]{r} & \exp\{2\pi if\}
\end{tikzcd}
\]
If $\mathfrak{Im}(\exp)$ denotes the image presheaf
\[
\mathfrak{Im}(\exp)\big(U\big) =
Img\left\{ \hspace{-.15cm}
\begin{tikzcd}[column sep = scriptsize]
\mathscr{C}_\C(U) \arrow{r}{\exp} & \mathscr{C}_\C^\times(U)
\end{tikzcd}
\hspace{-.15cm}
\right\}
\]
then
\[
\begin{tikzcd}[column sep = scriptsize]
0 \arrow{r} & \underline{\Z} \arrow{r} & \mathscr{C}_\C \arrow{r}{\exp} & \mathfrak{Im}(\exp) \arrow{r} & 0
\end{tikzcd}
\]
is a short exact sequence of presheaves (i.e. exact for every open set),
and hence we get a long exact sequence in \v{C}ech cohomology
 \cite[Section 24]{serre1955faisceaux}
\[
\begin{tikzcd}[column sep = 1.2em]
{ }\arrow{r} & \check{H}^k(\mathcal{U};\mathscr{C}_\C) \arrow{r} &
\check{H}^k(\mathcal{U};\mathfrak{Im}(\exp)) \arrow{r}{\Delta} &
\check{H}^{k+1}(\mathcal{U};\underline{\Z}) \arrow{r} &
\check{H}^{k+1}(\mathcal{U};\mathscr{C}_\C) \arrow{r} &  { }
\end{tikzcd}
\]
Since $\mathscr{C}_\C$ admits partitions of unity (i.e. it is a fine sheaf), then
\begin{lemma}$\check{H}^k(\mathcal{U};\mathscr{C}_\C) = 0$ for every $k\geq 1$.
\end{lemma}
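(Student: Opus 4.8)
The plan is to exhibit an explicit contracting cochain homotopy on the \v{C}ech complex $\check{C}^\bullet(\mathcal{U};\mathscr{C}_\C)$ in positive degrees; this is the concrete content of the assertion that $\mathscr{C}_\C$ is a fine sheaf. Since all spaces here are assumed paracompact and Hausdorff, the first step is to fix a locally finite partition of unity $\{\varphi_r\}$ subordinate to $\mathcal{U}$ in the strong sense that $\mathsf{support}(\varphi_r)\subset U_r$ for every $r$ --- slightly more than the ``dominated'' notion used elsewhere in the paper, but available by paracompactness (shrinking lemma together with Urysohn).

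Second, for $n\ge 1$ I would define $K^n:\check{C}^n(\mathcal{U};\mathscr{C}_\C)\to\check{C}^{n-1}(\mathcal{U};\mathscr{C}_\C)$ by
\[
(K^n f)_{j_0\ldots j_{n-1}}=\sum_{r}\varphi_r\cdot f_{r\,j_0\ldots j_{n-1}},
\]
where the product $\varphi_r\cdot f_{r\,j_0\ldots j_{n-1}}$ --- a priori a continuous function only on $U_r\cap U_{j_0}\cap\cdots\cap U_{j_{n-1}}$ --- is extended by $0$ to all of $U_{j_0}\cap\cdots\cap U_{j_{n-1}}$. Checking that this extension is genuinely continuous is the main point to get right: on $W:=U_{j_0}\cap\cdots\cap U_{j_{n-1}}$ the open sets $U_r\cap W$ and $W\smallsetminus\mathsf{support}(\varphi_r)$ cover $W$ (here one uses $\mathsf{support}(\varphi_r)\subset U_r$), and on their overlap both prescriptions equal $0$, so the pasting lemma applies; local finiteness of $\{\mathsf{support}(\varphi_r)\}$ then makes the sum over $r$ a locally finite sum of continuous functions, hence continuous. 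This is the only place where fineness of $\mathscr{C}_\C$ is actually used.

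Third, a direct alternating-sum computation yields the homotopy identity $\delta^{n-1}\circ K^n+K^{n+1}\circ\delta^n=\mathrm{id}$ on $\check{C}^n(\mathcal{U};\mathscr{C}_\C)$ for every $n\ge 1$: expanding $(K^{n+1}\delta^n f)_{j_0\ldots j_n}=\sum_r\varphi_r(\delta^n f)_{r\,j_0\ldots j_n}$ splits into the term obtained by deleting the extra index $r$, which equals $\big(\sum_r\varphi_r\big)f_{j_0\ldots j_n}=f_{j_0\ldots j_n}$ since $\{\varphi_r\}$ is a partition of unity, plus the terms in which some $j_i$ is deleted, and the latter cancel precisely against $(\delta^{n-1}K^n f)_{j_0\ldots j_n}=\sum_i(-1)^i\sum_r\varphi_r f_{r\,j_0\ldots\widehat{j_i}\ldots j_n}$. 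With the paper's convention for $\delta$ there are no sign subtleties --- the two contributions add up to the identity on the nose.

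Finally, the conclusion is immediate: if $f\in\check{Z}^n(\mathcal{U};\mathscr{C}_\C)$ with $n\ge 1$ then $\delta^n f=0$, so $f=\delta^{n-1}(K^n f)\in\check{B}^n(\mathcal{U};\mathscr{C}_\C)$, whence $\check{H}^n(\mathcal{U};\mathscr{C}_\C)=\check{Z}^n/\check{B}^n=0$ for all $n\ge 1$. (Alternatively one could cite the general vanishing of the higher \v{C}ech cohomology of a fine sheaf on a paracompact Hausdorff space for every cover, but the homotopy above is short enough to record directly.)
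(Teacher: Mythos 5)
Your proposal is correct and is essentially the paper's own approach made explicit: the paper justifies the lemma in one line by noting that $\mathscr{C}_\C$ is a fine sheaf (admits partitions of unity), and your subordinate-partition-of-unity contracting homotopy --- including the extension-by-zero/pasting and local-finiteness checks, and the sign bookkeeping giving $\delta^{n-1}K^n + K^{n+1}\delta^n = \mathrm{id}$ --- is precisely the standard proof of that vanishing statement. No gaps.
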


Hence $\Delta: \check{H}^1(\mathcal{U};\mathfrak{Im}(\exp)) \longrightarrow
\check{H}^2(\mathcal{U};\underline{\Z})$ is an isomorphism.
Moreover,
\begin{proposition} Let $\{\varphi_t\}$ be a continuous partition of unity dominated by $\mathcal{U}$.
If $\eta = \{\eta_{rst}\} \in Z^2(\mathcal{N}(\mathcal{U});\Z)$, then
\[
\omega_{rs} = \exp
\Big (
2\pi i \sum_{t} \varphi_t\cdot \eta_{rst}
\Big ) \;\; , \;\;\; U_r \cap U_s \neq \emptyset
\]
defines an element $\omega = \{\omega_{rs}\} \in \check{C}^1(\mathcal{U}; \mathfrak{Im}(\exp))$.
Moreover, $\omega$ is a \v{C}ech cocycle, and
 the composition
\[
\begin{tikzcd}[column sep = scriptsize]
H^2(\mathcal{N}(\mathcal{U});\Z) \arrow{r}{\Psi_\mathcal{U}} &
\check{H}^2(\mathcal{U};\underline{\Z}) \arrow{r}{\Delta^{-1}} &
\check{H}^1(\mathcal{U};\mathfrak{Im}(\exp))
\end{tikzcd}
\]
satisfies  $\Delta^{-1}\circ\Psi_\mathcal{U} \big([\eta] \big)  = [\omega]$.
\end{proposition}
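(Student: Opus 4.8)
The plan is to establish all three assertions at once through a single auxiliary object. For each pair $(r,s)$ with $U_r\cap U_s\neq\emptyset$, set
\[
g_{rs}\;=\;\sum_{t}\varphi_t\cdot\eta_{rst}\,,
\]
a real-valued function on $U_r\cap U_s$, so that by construction $\omega_{rs}=\exp(2\pi i\,g_{rs})$. First I would check that $g_{rs}$ is well-defined and continuous: the sum is locally finite (we may take $\{\varphi_t\}$ locally finite, as in the paracompact case), and if $\varphi_t(b)\neq 0$ for some $b\in U_r\cap U_s$ then $b\in\mathsf{support}(\varphi_t)\subseteq\overline{U_t}$, so the open neighbourhood $U_r\cap U_s$ of $b$ meets $U_t$; hence $U_r\cap U_s\cap U_t\neq\emptyset$, i.e. $\{r,s,t\}$ is a $2$-simplex of $\mathcal{N}(\mathcal{U})$ and $\eta_{rst}$ is defined (degenerate triples being taken to contribute $0$). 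Thus $g_{rs}\in\mathscr{C}_\C(U_r\cap U_s)$, and since the exponential map of sheaves is $f\mapsto\exp(2\pi i f)$, it follows that $\omega_{rs}\in\mathfrak{Im}(\exp)(U_r\cap U_s)$ and hence $\omega=\{\omega_{rs}\}\in\check C^1(\mathcal{U};\mathfrak{Im}(\exp))$, which is the first assertion.

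The heart of the proof is one computation on triple intersections. On $U_r\cap U_s\cap U_u\neq\emptyset$ I would expand
\[
g_{su}-g_{ru}+g_{rs}\;=\;\sum_{t}\varphi_t\cdot\big(\eta_{sut}-\eta_{rut}+\eta_{rst}\big),
\]
where, by the same support argument, the only $t$ that contribute are those for which $\{r,s,u,t\}$ is a $3$-simplex of $\mathcal{N}(\mathcal{U})$. For each such $t$, the simplicial cocycle condition $\delta^2\eta=0$ evaluated on $\{r,s,u,t\}$ gives $\eta_{sut}-\eta_{rut}+\eta_{rst}=\eta_{rsu}$, a quantity independent of $t$. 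Factoring it out of the sum and using $\sum_t\varphi_t\equiv 1$ yields
\[
g_{su}-g_{ru}+g_{rs}\;\equiv\;\eta_{rsu}\qquad\text{on }\;U_r\cap U_s\cap U_u .
\]

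The last two assertions now follow quickly. In multiplicative notation the \v{C}ech coboundary reads $(\delta^1\omega)_{rsu}=\omega_{su}\cdot\omega_{ru}^{-1}\cdot\omega_{rs}=\exp\big(2\pi i\,(g_{su}-g_{ru}+g_{rs})\big)=\exp(2\pi i\,\eta_{rsu})=1$, because $\eta_{rsu}\in\Z$; hence $\omega\in\check Z^1(\mathcal{U};\mathfrak{Im}(\exp))$. For the equality $\Delta^{-1}\circ\Psi_\mathcal{U}([\eta])=[\omega]$ I would unwind the connecting homomorphism of the exponential sequence $0\to\underline\Z\to\mathscr{C}_\C\to\mathfrak{Im}(\exp)\to 0$ (which is an isomorphism $\Delta$ since $\mathscr{C}_\C$ is fine): the cocycle $\omega$ admits $\{g_{rs}\}\in\check C^1(\mathcal{U};\mathscr{C}_\C)$ as a lift along $\exp$, and $\Delta([\omega])$ is represented by the $\underline\Z$-valued cocycle $\delta^1\{g_{rs}\}$, whose value on $\{r,s,u\}$ is the constant $\eta_{rsu}$ by the previous display. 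But that cocycle is exactly $\psi^\eta$, so $\Delta([\omega])=[\psi^\eta]=\Psi_\mathcal{U}([\eta])$, and applying $\Delta^{-1}$ completes the proof.

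The computation itself is short; the only delicate points are bookkeeping. One must make sure the partition-of-unity sums are genuinely finite and define continuous functions, check that each $\eta_{\bullet}$ is evaluated only on triples that actually lie in $\mathcal{N}(\mathcal{U})$ (handled above via ``$\overline{U_t}$ meets the open set $U_r\cap U_s$''), and match the sign convention of the \v{C}ech coboundary $\delta^1$ with that of the simplicial coboundary $\delta^2$ so that the cancellation in the key display is precisely the one produced by $\delta^2\eta=0$.
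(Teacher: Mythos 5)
Your proposal is correct and follows essentially the same route as the paper: you lift $\omega$ along $\exp$ via $g_{rs}=\sum_t\varphi_t\cdot\eta_{rst}$, use $\delta^2\eta=0$ together with $\sum_t\varphi_t\equiv 1$ to get $g_{su}-g_{ru}+g_{rs}=\eta_{rsu}$, and read off both the cocycle property and $\Delta([\omega])=\Psi_\mathcal{U}([\eta])$ from that single identity, exactly as in the paper's proof (which merely performs the multiplicative cocycle check separately). Your extra care that $\eta_{rst}$ is only evaluated on genuine $2$-simplices, via the support condition $\mathsf{support}(\varphi_t)\subset\overline{U_t}$, is a welcome bookkeeping point the paper leaves implicit.
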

\begin{proof}
First we check that $\omega$ is a \v{C}ech cocycle:
\begin{eqnarray*}
  \omega_{rs}\cdot \omega_{st} &=& \exp\left(2\pi i \sum_{\ell} \varphi_\ell\cdot (\eta_{rs\ell} + \eta_{st \ell})\right) \\
   &=& \exp\left(2\pi i \sum_{\ell} \varphi_\ell\cdot(\eta_{rt\ell} + \eta_{rst})\right) \\
   &=& \exp\left(2\pi i \cdot \eta_{rst} +  \sum_{\ell} \varphi_\ell\cdot \eta_{rt\ell} \right) \\
&=& \omega_{rt}
\end{eqnarray*}
In order to see that $\Delta([\omega]) = \Psi_\mathcal{U}([\eta])$,
we use the definition of the connecting homomorphism $\Delta$.
First, we let $g = \{g_{rs}\} \in \check{C}^1(\mathcal{U};\mathscr{C}_\C)$
be the collection of functions
\[
g_{rs}(b) = \sum_\ell \varphi_\ell(b)\cdot \eta_{rs\ell} \;\;\;\;\;\;\; b\in U_r \cap U_s
\]
It follows that $\omega = \exp^\# (g)$ and therefore
$\Delta([\omega]) = [\delta^1(g)]$.
The coboundary $\delta^1(g)$ can be computed as
\begin{eqnarray*}
  \delta^1(g)_{rst} &=&  g_{rs} - g_{rt} + g_{st} \\
   &=& \sum_\ell \varphi_\ell \cdot (\eta_{rs\ell} - \eta_{rt\ell} + \eta_{st\ell}) \\
   &=& \sum_\ell \varphi_\ell\cdot  \eta_{rst} \\
&=& \eta_{rst}
\end{eqnarray*}
and therefore
$
\Delta([\omega])  =  \Psi_\mathcal{U}([\eta]).
$
\end{proof}

\begin{corollary}\label{coro:Simplicial2CechComplex}
If each $U_r \cap U_t$ is either empty or connected, then
\[
\begin{array}{cccc}
\Delta^{-1}\circ \Psi_\mathcal{U} : &H^2(\mathcal{N}(\mathcal{U});\Z)& \longrightarrow &
\check{H}^1(\mathcal{U};\mathfrak{Im}(\exp)) \\
& [\{\eta_{rst}\}] & \mapsto & [\{\omega_{rs}\}]
\end{array}
\]
is injective, where $\omega_{rs} = \exp \Big ( 2\pi i \sum\limits_{t} \varphi_t \cdot \eta_{rst}\Big )$.
\end{corollary}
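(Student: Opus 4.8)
The plan is to deduce the corollary formally from the three results established immediately above it, since every ingredient is already in place. First I would recall why $\Delta$ is invertible: the short exact sequence of presheaves $0 \to \underline{\Z} \to \mathscr{C}_\C \to \mathfrak{Im}(\exp) \to 0$ produces a long exact sequence in \v{C}ech cohomology, and the fine-sheaf Lemma gives $\check{H}^k(\mathcal{U};\mathscr{C}_\C) = 0$ for all $k \geq 1$. Extracting the segment
\[
\check{H}^1(\mathcal{U};\mathscr{C}_\C) \longrightarrow \check{H}^1(\mathcal{U};\mathfrak{Im}(\exp)) \xrightarrow{\Delta} \check{H}^2(\mathcal{U};\underline{\Z}) \longrightarrow \check{H}^2(\mathcal{U};\mathscr{C}_\C)
\]
and using that both outer terms vanish shows $\Delta$ is an isomorphism, so $\Delta^{-1}$ is a genuine isomorphism of groups.

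Next I would invoke the Proposition asserting that $\Psi_\mathcal{U} : H^2(\mathcal{N}(\mathcal{U});\Z) \to \check{H}^2(\mathcal{U};\underline{\Z})$ is injective precisely under the hypothesis of this corollary, namely that each $U_r \cap U_t$ is empty or connected. Composing an injection with an isomorphism is again an injection, so $\Delta^{-1} \circ \Psi_\mathcal{U}$ is injective. It then remains only to transcribe the explicit cocycle formula, which is exactly the content of the preceding Proposition: for $\eta = \{\eta_{rst}\} \in Z^2(\mathcal{N}(\mathcal{U});\Z)$ the cochain $\omega_{rs} = \exp\big(2\pi i \sum_t \varphi_t \cdot \eta_{rst}\big)$ lies in $\check{Z}^1(\mathcal{U};\mathfrak{Im}(\exp))$ and represents $\Delta^{-1} \circ \Psi_\mathcal{U}([\eta])$. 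Substituting this into the injective map just obtained yields the stated conclusion.

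There is no genuine obstacle here; the corollary is a formal assembly of (i) the vanishing $\check{H}^k(\mathcal{U};\mathscr{C}_\C) = 0$ for $k \geq 1$, (ii) injectivity of $\Psi_\mathcal{U}$ under the connectivity hypothesis, and (iii) the cocycle-level computation of $\Delta^{-1} \circ \Psi_\mathcal{U}$ performed just above. The one point worth a remark, rather than a real difficulty, is that the displayed formula for $\omega$ depends on a choice of dominated partition of unity $\{\varphi_t\}$ while the map $\Delta^{-1} \circ \Psi_\mathcal{U}$ does not; but this independence is automatic from the identity $[\omega] = \Delta^{-1} \circ \Psi_\mathcal{U}([\eta])$ supplied by the preceding Proposition, so no separate argument is required.
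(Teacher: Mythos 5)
Your proposal is correct and matches the paper's intent exactly: the corollary is stated without separate proof precisely because it is the formal composition of the vanishing lemma (making $\Delta$ an isomorphism), the injectivity of $\Psi_\mathcal{U}$ under the stated connectivity hypothesis, and the explicit cocycle computation of $\Delta^{-1}\circ\Psi_\mathcal{U}$ from the preceding proposition. Nothing is missing.
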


When going from
$\check{H}^1(\mathcal{U};\mathfrak{Im}(\exp))$ to $\check{H}^1(\mathcal{U};\mathscr{C}_\C^\times)$
one considers the inclusion  of presheaves $\jmath: \mathfrak{Im}(\exp) \longrightarrow \mathscr{C}_\C^\times$
and its induced homomorphism in cohomology
\[
\jmath_\mathcal{U} : \check{H}^1(\mathcal{U};\mathfrak{Im}(\exp)) \longrightarrow \check{H}^1(\mathcal{U};\mathscr{C}_\C^\times)
\]
After taking direct limits over refinements of $\mathcal{U}$,
the resulting homomorphism
is
an isomorphism  \cite[Proposition 7, section 25]{serre1955faisceaux}.
That is, each element in $\ker(\jmath_\mathcal{U})$ is also in the kernel of
$\check{H}^1(\mathcal{U};\mathfrak{Im}(\exp)) \longrightarrow \check{H}^1(\mathcal{V};\mathfrak{Im}(\exp))$
for some refinement  $\mathcal{V} $ of $\mathcal{U}$;
and  for every element in $\check{H}^1(\mathcal{U};\mathscr{C}_\C^\times)$ there exists
a refinement $\mathcal{W}$ of $\mathcal{U}$ so that the  image
 of said element via $\check{H}^1(\mathcal{U};\mathscr{C}_\C^\times) \longrightarrow \check{H}^1(\mathcal{W};\mathscr{C}_\C^\times) $
is also in the image of $\check{H}^1(\mathcal{W};\mathfrak{Im}(\exp)) \longrightarrow \check{H}^1(\mathcal{W};\mathscr{C}_\C^\times)$.

The situation is sometimes simpler.
Recall that a topological space is said to be simply connected if
it is path-connected and its fundamental group is trivial.
In addition, it is said to be locally path-connected if each point
has a path-connected open neighborhood.

\begin{lemma} Let $\mathcal{U} = \{U_r\}$ be an open  covering of $B$
such that each $U_r$ is locally path-connected  and simply connected.
Then
\[
\jmath_\mathcal{U} : \check{H}^1(\mathcal{U};\mathfrak{Im}(\exp))
\longrightarrow
\check{H}^1(\mathcal{U};\mathscr{C}^\times_\C)
\]
is injective.
\end{lemma}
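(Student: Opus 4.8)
The plan is to reduce the statement to the classical fact that a continuous nowhere-vanishing complex-valued function on a simply connected, locally path-connected space admits a continuous logarithm, and then to push this fact through the presheaf \v{C}ech machinery at the level of $0$-cochains.

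First I would isolate the following logarithm lemma. Recall that $\exp(2\pi i\,\cdot\,)\colon \C\longrightarrow\C^\times$ is the universal covering map of $\C^\times$: its total space $\C$ is simply connected and its group of deck transformations is $\Z$. The lemma to prove is: \emph{if $Y$ is path-connected, locally path-connected and simply connected, then every continuous map $g\colon Y\longrightarrow\C^\times$ factors as $g=\exp(2\pi i f)$ for some continuous $f\colon Y\longrightarrow\C$.} This is exactly the covering-space lifting criterion applied to $g$: since $\pi_1(Y)=\{1\}$ we trivially have $g_*\pi_1(Y)=\{1\}\subseteq\exp(2\pi i\,\cdot\,)_*\pi_1(\C)$, while path-connectedness and local path-connectedness of $Y$ are precisely the hypotheses the criterion needs in order to produce a continuous lift $f$. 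Applying this with $Y=U_r$ — simply connected (hence path-connected) by the convention recalled just above, and locally path-connected by hypothesis — I obtain $\mathfrak{Im}(\exp)(U_r)=\mathscr{C}_\C^\times(U_r)$ for every index $r$.

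Next I would unwind what it means for a class to lie in $\ker(\jmath_\mathcal{U})$. Let $\omega=\{\omega_{rs}\}\in\check{Z}^1(\mathcal{U};\mathfrak{Im}(\exp))$ with $\jmath_\mathcal{U}([\omega])=0$ in $\check{H}^1(\mathcal{U};\mathscr{C}_\C^\times)$. Then there is a $0$-cochain $h=\{h_r\}\in\check{C}^0(\mathcal{U};\mathscr{C}_\C^\times)$, i.e. continuous maps $h_r\colon U_r\longrightarrow\C^\times$, with $\omega_{rs}=h_s/h_r$ on each nonempty $U_r\cap U_s$. By the logarithm lemma each $h_r$ already lies in $\mathfrak{Im}(\exp)(U_r)$, so in fact $h\in\check{C}^0(\mathcal{U};\mathfrak{Im}(\exp))$. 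Since $\mathfrak{Im}(\exp)$ is a sub-presheaf of $\mathscr{C}_\C^\times$, the inclusion of \v{C}ech complexes $\check{C}^*(\mathcal{U};\mathfrak{Im}(\exp))\hookrightarrow\check{C}^*(\mathcal{U};\mathscr{C}_\C^\times)$ commutes with the coboundary maps; hence $\delta^0(h)$ computed in the presheaf $\mathfrak{Im}(\exp)$ is carried to $\delta^0(h)=\omega$ computed in $\mathscr{C}_\C^\times$. As $\omega$ itself belongs to the subgroup $\check{C}^1(\mathcal{U};\mathfrak{Im}(\exp))$, this forces $\omega=\delta^0(h)$ as an identity in $\check{C}^1(\mathcal{U};\mathfrak{Im}(\exp))$, so $[\omega]=0$ there, which is the asserted injectivity.

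The one step I expect to require care — the main obstacle, such as it is — is the invocation of the lifting criterion in the logarithm lemma: one must genuinely verify that its connectivity hypotheses hold (this is exactly why ``locally path-connected and simply connected'' is imposed on each $U_r$, rather than, say, contractibility) and that $\exp(2\pi i\,\cdot\,)$ is an honest covering of $\C^\times$ and not merely a surjection. Everything downstream of that lemma is routine bookkeeping with presheaf \v{C}ech cochains; in particular it uses no hypothesis on the intersections $U_r\cap U_s$, since $\omega$ is assumed from the start to take values in $\mathfrak{Im}(\exp)$.
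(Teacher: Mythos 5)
Your proposal is correct and follows essentially the same route as the paper: show that any $0$-cochain $\{h_r\}$ in $\mathscr{C}^\times_\C$ trivializing $\omega$ actually lies in $\check{C}^0(\mathcal{U};\mathfrak{Im}(\exp))$, by lifting each $h_r$ along the universal cover of $\C^\times$ using the hypotheses on $U_r$. The only cosmetic difference is that the paper models the universal cover as $\R_{+}\times\R \ni (\rho,\theta)\mapsto \rho e^{2\pi i\theta}$ and then assembles the logarithm $\phi_r = \theta_r - i\ln(\rho_r)/2\pi$ by hand, whereas you use $\exp(2\pi i\,\cdot\,)\colon\C\to\C^\times$ directly.
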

\begin{proof}
Let $[\{\omega_{rt}\}] \in \check{H}^1(\mathcal{U};\mathfrak{Im}(\exp))$
be an element in the kernel of $\jmath_\mathcal{U}$.
Then there exists a collection of continuous maps
\[
\nu_r : U_r \longrightarrow \C^\times
\]
so that $\omega_{rt} = \frac{\nu_t}{\nu_r}$ on $U_r \cap U_t \neq \emptyset$.
If we let
\[
\begin{array}{rccl}
p : &\R_{+} \times \R  &\longrightarrow & \C^\times \\
&(\rho,\theta)& \mapsto & \rho \cdot e^{2\pi i \cdot\theta}
\end{array}
\]
then it follows that $\left(\R_{+}\times \R\, ,\, p\right)$ is the universal cover
for $\C^\times$.
Moreover, since each $U_r$ is  locally path-connected
and simply connected, then each $\nu_r$
has a lift \cite[Proposition 1.33]{hatcher2002algebraic}
\[
\begin{array}{rccl}
\gor{\nu}_r : & U_r&  \longrightarrow & \R_{+} \times \R \\
&b& \mapsto & \big(\rho_r(b), \theta_r (b)\big)
\end{array}
\]
That is $p \circ \gor{\nu}_r(b )= \nu_r(b)$ for all $b\in U_r$.
Let $\phi_r : U_r \longrightarrow \C$ be defined as
\[
\phi_r(b) = \theta_r(b) - i\frac{\ln(\rho_r(b))}{2\pi}
\]
It follows that $\{\phi_r\} \in \check{C}^0(\mathcal{U};\mathscr{C}_\C)$
and that for all $b\in U_r$
\begin{eqnarray*}
\exp\big(2\pi i \cdot \phi_r(b)\big) &= & \exp\big(\ln(\rho_r(b)) + 2\pi i\cdot \theta_r(b)\big) \\
&=& \rho_r(b)\cdot e^{2\pi i \cdot \theta_r (b)} \\
&=& \nu_r (b)
\end{eqnarray*}
Therefore $\nu_r = \exp(2\pi i \cdot \phi_r)$ and
$\{\nu_r\} \in \check{C}^0(\mathcal{U};\mathfrak{Im}(\exp))$,
which implies   $[\{\omega_{rt}\}] = 0 $ in $\check{H}^1(\mathcal{U};\mathfrak{Im}(\exp))$ as claimed.
\end{proof}

In summary, given an open cover $\mathcal{U}$ of  $B$  we get the function
\[
\begin{tikzcd}[column sep = 1.6em]
\mathbf{c}_1^\mathcal{U}:H^2(\mathcal{N}(\mathcal{U});\Z) \arrow[shorten <= -1em, shorten >= -1.1em]{r}{\Delta^{-1}\circ\Psi_\mathcal{U}} \;\;\;
&
\;\;\;
\check{H}^1(\mathcal{U}; \mathfrak{Im}(\exp)) \arrow{r}{\jmath_\mathcal{U}}
&
\check{H}^1(\mathcal{U}; \mathscr{C}_\C^\times) \arrow{r}{\Gamma_\mathcal{U}}
&
\left[B,\CP^\infty\right]
\end{tikzcd}
\]
which is natural with respect to refinements and satisfies:
\begin{corollary}
Let $\mathcal{U} = \{U_j\}$ be an open  cover of $B$
such that each $U_r$ is locally path-connected  and simply connected, and each $U_r \cap U_t$ is either empty or connected.
Then
\[
\mathbf{c}_1^\mathcal{U}:H^2(\mathcal{N}(\mathcal{U});\Z)
\longrightarrow
\big[ B \, , \, \CP^\infty \big]
\]
is injective.
\end{corollary}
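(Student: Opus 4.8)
The plan is to recognize $\mathbf{c}_1^\mathcal{U}$ as a composition
\[
\mathbf{c}_1^\mathcal{U} = \Gamma_\mathcal{U} \circ \jmath_\mathcal{U} \circ \Delta^{-1} \circ \Psi_\mathcal{U}
\]
of four maps, each of which has already been shown — under exactly the hypotheses assumed in this corollary — to be injective or an isomorphism, and then to observe that a composition of such maps is injective. No new construction is required; the work is in checking that each factor's hypotheses are met and reading off the conclusion.

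First I would invoke the Proposition on $\Psi_\mathcal{U}$: since each $U_r \cap U_t$ is empty or connected, every locally constant $\Z$-valued function on $U_r \cap U_t$ is constant, and this is precisely what makes $\Psi_\mathcal{U} : H^2(\mathcal{N}(\mathcal{U});\Z) \to \check{H}^2(\mathcal{U};\underline{\Z})$ injective. Next, $\Delta : \check{H}^1(\mathcal{U};\mathfrak{Im}(\exp)) \to \check{H}^2(\mathcal{U};\underline{\Z})$ is the connecting homomorphism of the long exact \v{C}ech sequence attached to $0 \to \underline{\Z} \to \mathscr{C}_\C \to \mathfrak{Im}(\exp) \to 0$; because $\mathscr{C}_\C$ is fine, the Lemma gives $\check{H}^k(\mathcal{U};\mathscr{C}_\C) = 0$ for $k \geq 1$, so $\Delta$ is an isomorphism and $\Delta^{-1}$ is in particular injective. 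Then $\jmath_\mathcal{U} : \check{H}^1(\mathcal{U};\mathfrak{Im}(\exp)) \to \check{H}^1(\mathcal{U};\mathscr{C}_\C^\times)$ is injective by the Lemma on $\jmath_\mathcal{U}$, whose hypotheses — each $U_r$ locally path-connected and simply connected — are again what is assumed here; there the trivializing cochain $\{\nu_r : U_r \to \C^\times\}$ is lifted through the universal cover $\R_{+} \times \R \to \C^\times$ to produce a cochain in $\check{C}^0(\mathcal{U};\mathscr{C}_\C)$, hence in $\mathfrak{Im}(\exp)$, exhibiting triviality already in $\check{H}^1(\mathcal{U};\mathfrak{Im}(\exp))$. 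Finally $\Gamma_\mathcal{U} : \check{H}^1(\mathcal{U};\mathscr{C}_\C^\times) \to [B,\CP^\infty]$ is injective by the earlier Corollary (combining the injectivity of $\check{H}^1(\mathcal{U};\mathscr{C}_\F^\times) \hookrightarrow \mathsf{Vect}^1_\F(B)$ with the bijection of Theorem \ref{thm:BundleClassification}), with no hypothesis beyond $\mathcal{U}$ being an open cover.

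Composing these, $\mathbf{c}_1^\mathcal{U}$ is injective. I do not expect a genuine obstacle: the one point deserving attention is bookkeeping — the connectivity of pairwise intersections enters only through $\Psi_\mathcal{U}$, the simple-connectivity and local path-connectivity of the $U_r$ only through $\jmath_\mathcal{U}$, and the remaining two arrows rely only on fineness of $\mathscr{C}_\C$ and on the prior Corollary; one should also keep in mind that $\Delta^{-1}\circ\Psi_\mathcal{U}$ is already made explicit by the Proposition computing it, namely $[\{\eta_{rst}\}] \mapsto [\{\omega_{rs}\}]$ with $\omega_{rs} = \exp(2\pi i \sum_t \varphi_t \eta_{rst})$, so nothing new needs verification there. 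Naturality with respect to refinements, although not part of the claim, likewise follows since each factor is natural.
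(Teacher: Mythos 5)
Your proposal is correct and is exactly the argument the paper intends: the corollary is read off from the factorization $\mathbf{c}_1^\mathcal{U} = \Gamma_\mathcal{U}\circ\jmath_\mathcal{U}\circ\Delta^{-1}\circ\Psi_\mathcal{U}$, with each factor injective (or an isomorphism) under precisely the hypotheses you cite. Your bookkeeping of which hypothesis feeds which factor matches the paper's development, so there is nothing to add.
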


%
We summarize the results of this section in the following theorem:
\begin{theorem}\label{thm:ClassifyingMapFormula2} Let $\mathcal{U} = \{U_0 ,\ldots, U_n\}$ be an open cover of $B$, and let $\{\varphi_r\}$ be
a partition of unity dominated by $\mathcal{U}$. Then we have functions
\[
\begin{array}{rccc}
\hspace{-0.2cm}\ww^\mathcal{U}_1 : & H^1(\mathcal{N}(\mathcal{U});\Z/2) &\longrightarrow & \big[B\, , \, \RP^\infty \big] \\[.1cm]
&[\tau= \{\tau_{rt}\}] & \mapsto & [f_\tau]
\end{array}
\mbox{ , }
\begin{array}{rccc}
\mathbf{c}^\mathcal{U}_1 : & H^2(\mathcal{N}(\mathcal{U});\Z) & \longrightarrow & \big[B\, , \, \CP^\infty \big]\\[.1cm]
&[\eta= \{\eta_{rst}\}] & \mapsto & [f_\eta]
\end{array}
\]
natural w.r.t refinements of $\mathcal{U}$,
where $\tau_{rt} = \tau(\{r,t\})$, $\eta_{rst} = \eta(\{r,s,t\})$ and
\[
\begin{array}{rccl}
f_\tau : & B &\longrightarrow & \RP^n \\
& U_j \ni b & \mapsto & \left[(-1)^{\tau_{0j}}\sqrt{\varphi_0(b)}: \cdots :  (-1)^{\tau_{nj}}\sqrt{\varphi_n(b)} \right] \\ \\
f_\eta : & B &\longrightarrow & \CP^n \\
& U_j \ni b & \mapsto & \left[e^{2\pi i\sum\limits_t \varphi_t(b) \eta_{0jt}}\sqrt{\varphi_0(b)} \;: \cdots :\;
e^{2\pi i \sum\limits_t \varphi_t(b)\eta_{njt}}\sqrt{\varphi_n(b)} \right]
\end{array}
\]
are well-defined.
Moreover,
if each $U_r$ is  connected,
then
$
\ww_1^\mathcal{U}$
is injective;
if in addition each $U_r$ is locally path-connected  and simply connected, and each $U_r \cap U_t$ is either empty or connected, then
$
\mathbf{c}_1^\mathcal{U}
$
is injective.
\end{theorem}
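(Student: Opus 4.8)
The plan is to assemble the theorem entirely from the named intermediate results already established in this section, so that almost no fresh argument is required. First I would treat the real case. The composite $\ww_1^\mathcal{U} = \Gamma_\mathcal{U} \circ \Phi_\mathcal{U}$ was defined just above, so I need only unravel what it does to a cocycle $\tau$. By definition $\Phi_\mathcal{U}([\tau]) = [\phi^\tau]$ where $\phi^\tau_{rt}$ is the constant function with value $(-1)^{\tau_{rt}}$, and these satisfy the cocycle condition (\ref{eq:CocycleCondition}) because $\tau$ is a simplicial $1$-cocycle and $(-1)^{\tau_{rs}}(-1)^{\tau_{st}} = (-1)^{\tau_{rs}+\tau_{st}} = (-1)^{\tau_{rt}}$ on triple overlaps. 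Then $\Gamma_\mathcal{U}([\phi^\tau]) = [f_{\phi^\tau}]$, and substituting $\omega_{rt} = \phi^\tau_{rt} = (-1)^{\tau_{rt}}$ into the formula (\ref{eq:classMapFormula_final}) of Theorem \ref{thm:ClassifyingMapFormula1} gives exactly the displayed formula for $f_\tau$. Well-definedness (independence of $j$ with $b\in U_j$) and the classifying property come for free from Theorem \ref{thm:ClassifyingMapFormula1}. Naturality with respect to refinements and, under the hypothesis that each $U_r$ is connected, injectivity of $\ww_1^\mathcal{U}$ are precisely the content of Proposition \ref{prop:Simplicial2CechReal} combined with the corollary on $\Gamma_\mathcal{U}$ and the corollary naming $\ww_1^\mathcal{U}$; I would just cite these.

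Next the complex case, which is structurally identical but longer. Here $\mathbf{c}_1^\mathcal{U} = \Gamma_\mathcal{U} \circ \jmath_\mathcal{U} \circ (\Delta^{-1}\circ\Psi_\mathcal{U})$. Feeding a simplicial $2$-cocycle $\eta$ through $\Delta^{-1}\circ\Psi_\mathcal{U}$ yields, by the Proposition computing that composite and by Corollary \ref{coro:Simplicial2CechComplex}, the \v{C}ech $1$-cocycle $\omega = \{\omega_{rs}\}$ with $\omega_{rs} = \exp\bigl(2\pi i \sum_t \varphi_t \eta_{rst}\bigr)$; that Proposition already verified $\omega$ is a cocycle, i.e. satisfies (\ref{eq:CocycleCondition}). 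Applying $\jmath_\mathcal{U}$ just views $\omega$ inside $\check{H}^1(\mathcal{U};\mathscr{C}_\C^\times)$, and then $\Gamma_\mathcal{U}([\omega]) = [f_\omega]$ with $f_\omega$ given by (\ref{eq:classMapFormula_final}). Substituting $\omega_{rj}(b) = \exp\bigl(2\pi i\sum_t \varphi_t(b)\eta_{rjt}\bigr)$ into that formula produces exactly the stated $f_\eta$, and again well-definedness and the classifying property are inherited from Theorem \ref{thm:ClassifyingMapFormula1}. For the injectivity claim: $\Delta^{-1}\circ\Psi_\mathcal{U}$ is injective under "each $U_r\cap U_t$ empty or connected" (Corollary \ref{coro:Simplicial2CechComplex}), $\jmath_\mathcal{U}$ is injective under "each $U_r$ locally path-connected and simply connected" (the Lemma proved via lifting to the universal cover of $\C^\times$), and $\Gamma_\mathcal{U}$ is always injective; composing injective maps gives injectivity of $\mathbf{c}_1^\mathcal{U}$, which is exactly the corollary stated just before this theorem. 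Naturality is the concatenation of the naturality statements for each factor.

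The one genuine verification I do not get entirely for free is that the explicit formulas in the theorem statement literally match the outputs of the composites — in other words, that no homogeneous-coordinate normalization or choice of bump function has been silently changed. I would note that Theorem \ref{thm:ClassifyingMapFormula2} uses the quadratic-bump/$\sqrt{\varphi_r}$ normalization of Theorem \ref{thm:ClassifyingMapFormula1} (the $\psi(x)=\sqrt{x}$ choice), which is consistent throughout, and that the partition of unity $\{\varphi_r\}$ appearing in the $\eta$-formula is the \emph{same} one used to build $\omega$ in the Proposition before Corollary \ref{coro:Simplicial2CechComplex}; this is why the hypothesis fixes $\{\varphi_r\}$ once and for all at the start. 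With that observation in place, the theorem is just the assembly of the diagram
\[
\begin{tikzcd}[column sep = small]
H^1(\mathcal{N}(\mathcal{U});\Z/2) \arrow{r}{\Phi_\mathcal{U}} & \check{H}^1(\mathcal{U};\mathscr{C}_\R^\times) \arrow{r}{\Gamma_\mathcal{U}} & [B,\RP^\infty]
\end{tikzcd}
\]
and its complex analogue, with each arrow already analyzed. The main obstacle, such as it is, is purely bookkeeping: making sure the chain of identifications $\Psi_\mathcal{U}$, $\Delta^{-1}$, $\jmath_\mathcal{U}$, $\Gamma_\mathcal{U}$ is traversed in the correct direction and that the connecting-homomorphism computation (the $\delta^1(g)_{rst} = \eta_{rst}$ calculation in the Proposition) is quoted rather than redone. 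No new mathematical content is needed.
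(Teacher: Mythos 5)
Your proposal is correct and matches the paper's approach exactly: the paper presents this theorem explicitly as a summary of the section's results, so its ``proof'' is precisely the assembly of $\Phi_\mathcal{U}$, $\Gamma_\mathcal{U}$, $\Delta^{-1}\circ\Psi_\mathcal{U}$ and $\jmath_\mathcal{U}$ together with their already-established injectivity and naturality properties, plus the substitution of the constant and exponential cocycle values into the formula of Theorem \ref{thm:ClassifyingMapFormula1}. Your added remark about checking that the fixed partition of unity and the $\sqrt{\varphi_r}$ normalization are used consistently across the composite is the right bookkeeping point and requires no new argument.
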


\section{Dimensionality Reduction in $\FP^n$ via Principal Projective Coordinates}
\label{sec:ProjectiveCoordinates}

Let $V \subset \F^{n+1}$ be a linear subspace with $\dim(V) \geq 1$.
If $\sim$ is the equivalence relation on $\F^{n+1}\smallsetminus \{\mathbf{0}\}$
given by $\uu \sim \vv$ if and only if $\uu = \lambda \vv$ for some $\lambda \in \F$,
then  $\sim$ is also an equivalence relation on
$V\smallsetminus\{\mathbf{0}\}$ and hence we can define
\[
\FP^{\dim(V)-1}_V := \big(V \smallsetminus \{\mathbf{0}\}\big)\big/\sim
\]
In particular $\FP^{\dim(U)-1}_U = \FP^{\dim(V)-1}_V$ if and only if $U = V$,
and  $\FP_V^{\dim(V)-1}$ is a subset of $\FP^n$.
Recall that $\F$ is either $\R$ or $\C$.
For $\uu,\vv \in \F^{n+1}$ let
\[
\langle\uu,\vv\rangle = \sum_{r=0}^n u_r \cdot \bar{v}_r
\]
denote their inner product.
If $\mathbf{d}_g$ denotes the geodesic distance in $\FP^n$ induced by the Fubini-Study metric,
then one has that
\[
\mathbf{d}_g([\uu],[\vv]) = \arccos \left(\frac{|\langle\uu,\vv\rangle|}{\|\uu\| \cdot \|\vv\|}\right)
\]
and it can be checked that $\FP^{\dim(V)-1}_V$ is an isometric copy of $\FP^{\dim(V)-1}$ inside
$\FP^n$.

If $1\leq \dim(V)\leq n$  and
$V^\perp = \{\uu\in \F^{n+1} : \langle\uu,\vv\rangle= 0 \mbox{ for all }\vv\in V\}$,
then the orthogonal projection
$p_V : \F^{n+1} \longrightarrow V$ descends to a continuous
map
\[
\begin{array}{rrcl}
P_V : &\FP^{n} \smallsetminus
\FP_{V^\perp}^{n -\dim(V)} &\longrightarrow &
\FP_V^{\dim(V)-1} \\
& \, [\uu] & \mapsto& [\,p_V(\uu)]
\end{array}
\]
Recall that $p_V$ sends each $\uu \in \F^{n+1}$ to its closest point in
$V$ with respect to the distance induced by $\|\cdot\|$. A similar property
is inherited by $P_V$:
\begin{proposition}
If $[\ww] \in \FP^n\smallsetminus
\FP_{V^\perp}^{n-\dim(V)}$,
then $P_V([\ww])$ is the point in $\FP_V^{\dim(V)-1}$
which is closest to $[\ww]$
 with respect to $\mathbf{d}_g$.
\end{proposition}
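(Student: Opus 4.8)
The plan is to reduce the geodesic minimization to a Cauchy--Schwarz estimate inside the linear subspace $V$. Since $\arccos$ is strictly decreasing on $[0,1]$, for a fixed $[\ww]$ the point $[\vv]\in\FP^{\dim(V)-1}_V$ that minimizes $\mathbf{d}_g([\ww],[\vv])$ is exactly the one that maximizes the quantity $\dfrac{|\langle\ww,\vv\rangle|}{\|\ww\|\cdot\|\vv\|}$ over $\vv\in V\smallsetminus\{\mathbf{0}\}$. Normalizing so that $\|\ww\|=1$, it therefore suffices to show that $\vv\mapsto\dfrac{|\langle\ww,\vv\rangle|}{\|\vv\|}$ attains its maximum over $V\smallsetminus\{\mathbf{0}\}$ precisely at the scalar multiples of $p_V(\ww)$.

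First I would use the orthogonal decomposition $\ww = p_V(\ww) + \big(\ww - p_V(\ww)\big)$, where $\ww - p_V(\ww)\in V^\perp$. For any $\vv\in V$ this gives $\langle\ww,\vv\rangle = \langle p_V(\ww),\vv\rangle$, so the functional to be maximized depends on $\ww$ only through $p_V(\ww)$. Applying the Cauchy--Schwarz inequality in $\F^{n+1}$ then yields $|\langle p_V(\ww),\vv\rangle|\leq\|p_V(\ww)\|\cdot\|\vv\|$, with equality if and only if $\vv$ is a scalar multiple of $p_V(\ww)$.

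The one place the hypothesis enters is in guaranteeing $p_V(\ww)\neq\mathbf{0}$: the condition $[\ww]\notin\FP^{n-\dim(V)}_{V^\perp}$ says precisely that $\ww\notin V^\perp$, i.e. $p_V(\ww)\neq\mathbf{0}$, so that $[p_V(\ww)]$ is a well-defined point of $\FP^{\dim(V)-1}_V$ and the supremum $\|p_V(\ww)\|$ is actually attained. Combining the two estimates, $\dfrac{|\langle\ww,\vv\rangle|}{\|\vv\|}\leq\|p_V(\ww)\|$ for all $\vv\in V\smallsetminus\{\mathbf{0}\}$, with equality exactly when $[\vv]=[p_V(\ww)]=P_V([\ww])$; undoing the $\arccos$ shows $P_V([\ww])$ is the unique point of $\FP^{\dim(V)-1}_V$ closest to $[\ww]$. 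The argument is essentially a routine optimization, so I do not anticipate a genuine obstacle — the only point requiring a little care is tracking the absolute value through the complex case, where the Cauchy--Schwarz equality clause still gives $\vv=\lambda\,p_V(\ww)$ for some $\lambda\in\F$, which is exactly what collapses the set of minimizers to a single projective point.
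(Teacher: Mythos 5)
Your argument is correct and is essentially the same as the paper's proof: both use the orthogonal decomposition $\ww = p_V(\ww) + (\ww - p_V(\ww))$ to reduce to the Cauchy--Schwarz bound $|\langle \ww,\vv\rangle| = |\langle p_V(\ww),\vv\rangle| \leq \|p_V(\ww)\|\cdot\|\vv\|$ for $\vv \in V$, then invoke the monotonicity of $\arccos$. The only difference is that you additionally track the equality case to get uniqueness of the minimizer, which the paper's proof does not spell out.
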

\begin{proof}
Let $[\uu] \in \FP^{\dim(V)-1}_V$.
Since $[\ww] \notin \FP_{V^\perp}^{n - \dim(V)}$,
then $\ww - p_V(\ww)\in V^\perp$
with $p_V(\ww)\neq \mathbf{0}$. Therefore  $\langle\uu, \ww - p_U(\ww)\rangle = 0$
and by the Cauchy-Schwartz inequality
\[
\left| \left\langle\ww, \uu\right\rangle\right| = |\langle p_U(\ww),\uu\rangle| \leq \|p_U(\ww)\|\cdot \|\uu\|
\]
Hence
\[
\frac{|\langle\ww,\uu\rangle|}{\|\ww\|\cdot \|\uu\|}
\leq \frac{\|p_U(\ww)\|}{\|\ww\|}
= \frac{|\langle\ww,p_U(\ww)\rangle|}{\|\ww\|\cdot \|p_U(\ww)\|}
\]
and since  $\arccos(\alpha)$ is  decreasing, then
$\mathbf{d}_g([\ww],P_U([\ww])) \leq  \mathbf{d}_g([\ww],[\uu])$.
\end{proof}

Therefore, we can think of $P_V$ as the projection onto $\FP_{V}^{\dim(V)-1}$.
Moreover, let $\jmath_V : \FP_V^{\dim(V)-1} \hookrightarrow \FP^n \smallsetminus
\FP^{n - \dim(V)}_{V^\perp}$ be the inclusion map.

\begin{proposition} $\jmath_V \circ P_V $ is a deformation retraction.
\end{proposition}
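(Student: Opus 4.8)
The plan is to exhibit an explicit strong deformation retraction of $\FP^n\smallsetminus\FP^{n-\dim(V)}_{V^\perp}$ onto $\FP^{\dim(V)-1}_V$ by linearly contracting the component of a homogeneous coordinate vector that lies in $V^\perp$. Write $\F^{n+1} = V\oplus V^\perp$, let $p_V$ and $p_{V^\perp}$ denote the associated orthogonal projections, and for $t\in[0,1]$ define, on homogeneous coordinates,
\[
H\big([\uu],t\big) = \big[\,p_V(\uu) + (1-t)\,p_{V^\perp}(\uu)\,\big].
\]
First I would check that $H$ is well-defined as a map into $\FP^n$. Rescaling $\uu$ by $\lambda\in\F^\times$ rescales $p_V(\uu)+(1-t)p_{V^\perp}(\uu)$ by $\lambda$, so the homogeneous class is unchanged; and since $p_V(\uu)\in V$ and $(1-t)p_{V^\perp}(\uu)\in V^\perp$ are orthogonal, the vector $p_V(\uu)+(1-t)p_{V^\perp}(\uu)$ vanishes only when $p_V(\uu)=\mathbf{0}$, which is excluded because $[\uu]\notin\FP^{n-\dim(V)}_{V^\perp}$ means exactly that $p_V(\uu)\neq\mathbf{0}$. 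The same remark shows the output again has nonzero $V$-component, hence $H\big([\uu],t\big)\in\FP^n\smallsetminus\FP^{n-\dim(V)}_{V^\perp}$, so $H$ does take values in the asserted space for every $t$.

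Next I would establish continuity. The assignment $(\uu,t)\mapsto p_V(\uu)+(1-t)p_{V^\perp}(\uu)$ is continuous on $\{\uu\in\F^{n+1} : p_V(\uu)\neq\mathbf{0}\}\times[0,1]$ with values in $\F^{n+1}\smallsetminus\{\mathbf{0}\}$; composing with the quotient map $q\colon\F^{n+1}\smallsetminus\{\mathbf{0}\}\to\FP^n$ and using that $q$ is open — so that its restriction over $\FP^n\smallsetminus\FP^{n-\dim(V)}_{V^\perp}$, times $\mathrm{id}_{[0,1]}$, is again a quotient map — shows that $H$ is continuous. The endpoint identities are immediate: $H\big([\uu],0\big)=[\uu]$ gives $H(\,\cdot\,,0)=\mathrm{id}$, while $H\big([\uu],1\big)=[p_V(\uu)] = \jmath_V\circ P_V([\uu])$.

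Finally, if $[\uu]\in\FP^{\dim(V)-1}_V$ then $\uu\in V$, so $p_{V^\perp}(\uu)=\mathbf{0}$ and $p_V(\uu)=\uu$, whence $H\big([\uu],t\big)=[\uu]$ for all $t$; thus $H$ fixes $\FP^{\dim(V)-1}_V$ pointwise throughout, and $\jmath_V\circ P_V$ is a (strong) deformation retraction. I expect the only mildly delicate point to be the continuity claim — concretely, checking that the formula descends to a continuous map on the quotient $\FP^n\smallsetminus\FP^{n-\dim(V)}_{V^\perp}$ rather than merely on $\F^{n+1}\smallsetminus\{\mathbf{0}\}$ — but this is forced by openness of $q$ together with the scaling-invariance verified above; the remaining verifications are routine.
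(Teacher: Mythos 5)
Your proof is correct and is essentially the paper's argument: since $p_V(\uu) + (1-t)\,p_{V^\perp}(\uu) = (1-t)\uu + t\,p_V(\uu)$, your homotopy $H$ is literally the same straight-line homotopy the paper uses, with the same nonvanishing check ($p_V(\uu)\neq\mathbf{0}$ off $\FP^{n-\dim(V)}_{V^\perp}$) and descent to the quotient. You additionally verify that the retract is fixed pointwise (a strong deformation retraction), which is a harmless strengthening.
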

\begin{proof}
Since $p_V$ is surjective and satisfies $p_V \circ p_V (\ww) = p_V(\ww)$
for all $\ww \in \F^{n+1}$, it follows that $P_V$ is a retraction.
Let $h : \F^{n+1} \times [0,1] \longrightarrow \F^{n+1}$ be given
by $h(\ww, t) = (1-t)\cdot \ww + t\cdot p_V(\ww) $.
Since
$h(\ww,t) = \mathbf{0}$ implies that $\ww \in V^\perp$,
then $h$ induces a continuous map
\[
\begin{array}{rcl}
 \left(\FP^n \smallsetminus \FP_{V^\perp}^{n - \dim(V)}\right)\times [0,1] & \longrightarrow & \FP^n \smallsetminus \FP_{V^\perp}^{n - \dim(V)} \\[.1cm]
 \big([\ww], t\big) & \mapsto & [h(\ww,t)]
\end{array}
\]
which is a homotopy between the identity of $\FP^n \smallsetminus \FP_{V^\perp}^{n - \dim(V)}$
and $\jmath_V \circ P_V$.
\end{proof}
Notice that $[\uu] = \mathsf{Span}(\uu)$ for $\uu \in \F^{n+1} \smallsetminus \{\mathbf{0}\}$.
The previous proposition yields

\begin{corollary} Let $f: B \longrightarrow \FP^n$ be a continuous map which is not surjective.
If $[\uu] \notin f(B)$, then $f$ is homotopic
to $P_{[\uu]^\perp} \circ f\; :\; B \longrightarrow \FP_{[\uu]^\perp}^{n-1}\subset \FP^n$.
\end{corollary}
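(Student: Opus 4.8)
The plan is to deduce this directly from the preceding proposition applied to the single subspace $V = [\uu]^\perp$. First I would record the dimension bookkeeping: since $\uu \neq \mathbf{0}$, the line $\mathsf{Span}(\uu) \subset \F^{n+1}$ is $1$-dimensional, so $V = [\uu]^\perp$ satisfies $\dim(V) = n$, which lies in the range $1 \leq \dim(V) \leq n$ where $P_V$ and the proposition are defined. Moreover $V^\perp = \mathsf{Span}(\uu)$ (here I would note that this uses finite-dimensionality of $\F^{n+1}$ together with non-degeneracy of $\langle\,\cdot\,,\,\cdot\,\rangle$), and hence the excised locus is $\FP^{\,n-\dim(V)}_{V^\perp} = \FP^0_{[\uu]} = \{[\uu]\}$, a single point. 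With these identifications the proposition says precisely that $\jmath_{[\uu]^\perp} \circ P_{[\uu]^\perp}$ is a deformation retraction of $\FP^n \smallsetminus \{[\uu]\}$ onto $\FP^{n-1}_{[\uu]^\perp}$; in particular there is a homotopy $H$ from $\mathrm{id}_{\FP^n \smallsetminus \{[\uu]\}}$ to $\jmath_{[\uu]^\perp} \circ P_{[\uu]^\perp}$.

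Next I would invoke the hypothesis. Because $f$ is not surjective and $[\uu] \notin f(B)$, the image of $f$ lands in the open subset $\FP^n \smallsetminus \{[\uu]\}$, so $f$ corestricts to a continuous map $g : B \longrightarrow \FP^n \smallsetminus \{[\uu]\}$. Then $H \circ (g \times \mathrm{id}_{[0,1]})$ is a homotopy inside $\FP^n \smallsetminus \{[\uu]\}$ from $g$ to $\jmath_{[\uu]^\perp} \circ P_{[\uu]^\perp} \circ g$, and postcomposing with the inclusion $\FP^n \smallsetminus \{[\uu]\} \hookrightarrow \FP^n$ turns this into a homotopy in $\FP^n$ from $f$ to $P_{[\uu]^\perp} \circ f$, the latter regarded as a map into $\FP^{n-1}_{[\uu]^\perp} \subset \FP^n$. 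That is the desired conclusion.

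I do not expect a genuine obstacle here: the entire content is packaged in the deformation-retraction proposition, and the only points needing a line of care are the dimension count showing that $\FP^{\,n-\dim(V)}_{V^\perp}$ is exactly the excluded point $[\uu]$, and the routine observation that corestricting a non-surjective map to the complement of a point not in its image is continuous in the subspace topology.
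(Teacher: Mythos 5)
Your proposal is correct and follows exactly the route the paper intends: the corollary is stated as an immediate consequence of the preceding deformation-retraction proposition, applied with $V = [\uu]^\perp$ so that the excised locus $\FP^{\,n-\dim(V)}_{V^\perp}$ is the single point $\{[\uu]\}$. The dimension bookkeeping and the corestriction argument you spell out are precisely the (omitted) details behind the paper's one-line deduction.
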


In summary,  if $f: B \longrightarrow \FP^n$ is not surjective,
then it can be continuously deformed so that its image lies in
$\FP^{n-1}_{[\uu]^\perp} \subset \FP^n$,
for $[\uu]\notin f(B)$.
Moreover, the deformation is obtained by sending each $f(b) \in \FP^n$
to its closest point in $\FP^{n-1}_{[\uu]^\perp}$ with respect to
$\mathbf{d}_g$, along a shortest path in  $\FP^n$.
This analysis shows that the topological properties encoded
by $f$
are preserved by the dimensionality reduction step if
$f(B) \neq \FP^n$.

Given a  finite  set $\YY \subset \FP^n$, we will show next
that  $\uu$
can be chosen so that $\FP^{n-1}_{[\uu]^\perp}$ provides the best
$(n-1)$-dimensional approximation. Indeed,
given
\[\mathbf{Y} = \big\{ [\yy_1],\ldots, [\yy_N]\big\} \subset \FP^n\]
the goal is to find $\uu^* \in \F^{n+1}$  so that
\[
\uu^* = \argmin_{\substack{\uu \in \F^{n+1} \\[.1cm] \|\uu\| =1}}
\sum_{r=1}^N \mathbf{d}_g
\left([\yy_r], \FP^{n-1}_{[\uu]^\perp}\right)^2
\]
Since
\[
\mathbf{d}_g\left([\yy_r], \FP^{n-1}_{[\uu]^\perp}\right) =
\mathbf{d}_g\left([\yy_r], P_{[\uu]^\perp}([\yy_r])\right) =
\mathbf{d}_g\left([\yy_r], \big[\yy_r - \langle\yy_r,\uu\rangle\uu\big]\right)
\]
then
\begin{eqnarray}\label{eq:formulationLastPPC}
\uu^* &=& \argmin_{\substack{\uu \in \F^{n+1} \\[.1cm] \|\uu\| =1}}
\sum_{r=1}^N
\arccos\left(\frac{\left|\big\langle\yy_r, \yy_r - \langle\yy_r,\uu\rangle\uu\big\rangle \right|}
{\|\yy_r\|\cdot \|\yy_r - \langle\yy_r , \uu\rangle\uu\|}\right)^2 \nonumber \\ \nonumber \\
&=&
\argmin_{\substack{\uu \in \F^{n+1} \\[.1cm] \|\uu\| =1}}
\sum_{r=1}^N
\arccos\left(\frac{ \|\yy_r - \langle\yy_r , \uu\rangle\uu\|}
{\|\yy_r\|}\right)^2 \nonumber \\ \nonumber \\
&=&
\argmin_{\substack{\uu \in \F^{n+1} \\[.1cm] \|\uu\| =1}}
\sum_{r=1}^N
\left(\frac{\pi}{2} - \arccos
\left(\frac{\big|\langle\yy_r,\uu\rangle\big|}{\|\yy_r\|}\right)
\right)^2
\end{eqnarray}

This nonlinear least squares problem --- in a nonlinear domain --- can be solved  approximately using linearization;  the reduction, in turn, has a closed form solution.
Indeed,
the Taylor series expansion for $\arccos(\alpha)$ around 0 is
\[
\arccos(\alpha) = \frac{\pi}{2} - \left( \alpha +
\sum_{\ell=1}^\infty \frac{(2\ell)!}{4^\ell(\ell!)^2}\frac{\alpha^{2\ell+1}}{2\ell+1} \right) \;\; , \;\;\; |\alpha| < 1
\]
and therefore $\left|\frac{\pi}{2}-\arccos(\alpha)\right| \approx |\alpha|$
is a third order approximation.
Hence
\begin{equation}\label{eq:formulationLastPPC_Approx}
\uu^* \approx
\argmin_{\substack{\uu \in \F^{n+1} \\[.1cm] \|\uu\| =1}}
\sum_{r=1}^N \left|\frac{\langle\yy_r,\uu\rangle}{\|\yy_r\|}\right|^2
\end{equation}
which is a linear least squares problem, and a solution is
the  eigenvector of the
($n+1$)-by-($n+1$) uncentered covariance matrix
\[
\mathsf{Cov}\left(\frac{\yy_1}{\|\yy_1\|},\ldots, \frac{\yy_N}{\|\yy_N\|}\right) =
\Mat{| &  & | \\ \frac{\yy_1}{\|\yy_1\|} &\cdots & \frac{\yy_N}{\|\yy_N\|} \\ | &  & | }
\cdot
\Mat{ \mbox{\textemdash} & \frac{\bar{\yy}_1}{\|\bar{\yy}_1\|}& \mbox{\textemdash} \\
& \vdots &  \\
\mbox{\textemdash} & \frac{\bar{\yy}_N}{\|\bar{\yy}_N\|} & \mbox{\textemdash}}
\]
corresponding to the smallest eigenvalue.
Notice that if
$a_1 ,\ldots, a_N \in \F$ satisfy
$|a_r| = 1$ for each $r=1,\ldots, N$,
then
\[
\mathsf{Cov}\left(\frac{\yy_1}{\|\yy_1\|},\ldots, \frac{\yy_N}{\|\yy_N\|}\right) = \mathsf{Cov}\left(a_1\frac{\yy_1}{\|\yy_1\|},\ldots, a_N\frac{\yy_N}{\|\yy_N\|}\right)
\]
and hence we can write $\mathsf{Cov}(\mathbf{Y})$ for the unique uncentered covariance matrix
associated to $\mathbf{Y} = \big\{[\yy_1],\ldots, [\yy_N]\big\} \subset \FP^n$.
If $\uu\in \F^{n+1}$ is an eigenvector of $\mathsf{Cov}(\mathbf{Y})$ corresponding to the smallest eigenvalue, then
we use the notation
$\left[\uu \right] = {\tt LastProjComp}\left(\YY,\FP^n\right)$
with the understanding that $[\uu]$ is unique  only if the relevant
eigenspace has dimension one.
If not, the choice is arbitrary.

\subsection*{Principal Projective Coordinates}
First we define, inductively, the Principal Projective Components of $\YY$.
Starting with  $\left[\vv_n\right] =$\texttt{LastProjComp}$(\YY,\FP^n)$,
assume that for $1 \leq k \leq n-1$ the  components
$\left[\vv_{k+1}\right],\ldots, \left[\vv_{n}\right]\in \FP^{n}$
have been determined and let us define $[\vv_k]$.
To this end,
let $\{\uu_0,\ldots, \uu_k\}$ be an orthonormal basis for
$V^k = \mathsf{Span}\left(\vv_{k+1},\ldots, \vv_n\right)^\perp$,
let
\[
A_k = \Mat{|& & | \\ \uu_0 & \cdots & \uu_k \\ | & & |}
\]
and let $A_k^{\dag}$ be its conjugate transpose.
If $A_k^\dag \cdot \YY
= \left\{\left[A_k^\dag\yy_1 \right],\ldots, \left[A_k^\dag\yy_N \right]\right\}$, define
\[
[\vv_k]:= A_k \cdot {\tt LastProjComp}\left(A_k^\dag\cdot \YY \; , \; \FP^k\right)
\]
This is well-defined as the following proposition shows.
\begin{proposition} The class
$
[\vv_k] = A_k\cdot {\tt LastProjComp}\left(A_k^\dag \cdot \YY\;,\; \FP^k\right)
$ is independent of the choice of orthonormal basis $\{\uu_0,\ldots, \uu_k\}$.
\end{proposition}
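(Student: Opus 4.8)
The plan is to reduce the statement to the equivariance, under a unitary change of coordinates, of the two operations that build $[\vv_k]$: passing from $\YY$ to $A_k^\dag\cdot\YY$, and applying ${\tt LastProjComp}(\,\cdot\,,\FP^k)$. Any two orthonormal bases $\{\uu_0,\ldots,\uu_k\}$ and $\{\uu_0',\ldots,\uu_k'\}$ of the \emph{fixed} subspace $V^k=\mathsf{Span}(\vv_{k+1},\ldots,\vv_n)^\perp$ are related by $\uu_j'=\sum_i Q_{ij}\uu_i$ for a matrix $Q$, and expanding $\delta_{jl}=\langle\uu_j',\uu_l'\rangle$ using $\langle\uu_i,\uu_m\rangle=\delta_{im}$ yields $Q^\dag Q=I$; thus $Q$ is unitary (orthogonal when $\F=\R$), and at the level of the associated $(n+1)\times(k+1)$ matrices $A_k'=A_kQ$. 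Hence $(A_k')^\dag=Q^\dag A_k^\dag$, so
\[
(A_k')^\dag\cdot\YY \;=\; \big\{[Q^\dag A_k^\dag\yy_1],\ldots,[Q^\dag A_k^\dag\yy_N]\big\}\;=\;Q^\dag\cdot\big(A_k^\dag\cdot\YY\big),
\]
where for a unitary $(k+1)\times(k+1)$ matrix $Q$ and $\mathbf{Z}=\{[\mathbf{z}_1],\ldots,[\mathbf{z}_N]\}\subset\FP^k$ I write $Q^\dag\cdot\mathbf{Z}:=\{[Q^\dag\mathbf{z}_1],\ldots,[Q^\dag\mathbf{z}_N]\}$.

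The next step is to show that ${\tt LastProjComp}(Q^\dag\cdot\mathbf{Z},\FP^k)=Q^\dag\cdot{\tt LastProjComp}(\mathbf{Z},\FP^k)$ for every unitary $Q$. Since $\|Q^\dag\mathbf{z}_r\|=\|\mathbf{z}_r\|$, the uncentered covariance matrix is equivariant:
\[
\mathsf{Cov}\big(Q^\dag\cdot\mathbf{Z}\big)\;=\;\sum_{r=1}^N\frac{(Q^\dag\mathbf{z}_r)(Q^\dag\mathbf{z}_r)^\dag}{\|\mathbf{z}_r\|^2}\;=\;Q^\dag\Bigg(\sum_{r=1}^N\frac{\mathbf{z}_r\mathbf{z}_r^\dag}{\|\mathbf{z}_r\|^2}\Bigg)Q\;=\;Q^\dag\,\mathsf{Cov}(\mathbf{Z})\,Q .
\]
Consequently $\mathbf{w}$ is an eigenvector of $\mathsf{Cov}(\mathbf{Z})$ for the eigenvalue $\lambda$ if and only if $Q^\dag\mathbf{w}$ is an eigenvector of $\mathsf{Cov}(Q^\dag\cdot\mathbf{Z})$ for $\lambda$; in particular the two matrices share the same smallest eigenvalue, and $Q^\dag$ carries the corresponding eigenspace of the former onto that of the latter. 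Hence $[\mathbf{w}]={\tt LastProjComp}(\mathbf{Z},\FP^k)$ implies $[Q^\dag\mathbf{w}]={\tt LastProjComp}(Q^\dag\cdot\mathbf{Z},\FP^k)$. Taking $\mathbf{Z}=A_k^\dag\cdot\YY$ and $[\mathbf{w}]={\tt LastProjComp}(A_k^\dag\cdot\YY,\FP^k)$ and combining with the first paragraph,
\[
A_k'\cdot{\tt LastProjComp}\big((A_k')^\dag\cdot\YY,\FP^k\big)\;=\;A_kQ\cdot[Q^\dag\mathbf{w}]\;=\;[A_kQQ^\dag\mathbf{w}]\;=\;[A_k\mathbf{w}]\;=\;A_k\cdot{\tt LastProjComp}\big(A_k^\dag\cdot\YY,\FP^k\big),
\]
which is exactly $[\vv_k]$ computed from the first basis; here $A_k\mathbf{w}\neq\mathbf{0}$ because the columns of $A_k$ are linearly independent, so all bracketed classes are legitimate points of $\FP^n$.

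The only delicate point — and the nearest thing to an obstacle — is that ${\tt LastProjComp}$ is defined only up to the choice of a line inside the smallest eigenspace when that space has dimension greater than one. The computation above shows this residual ambiguity is transported consistently: if $W\subset\F^{k+1}$ is the relevant eigenspace of $\mathsf{Cov}(A_k^\dag\cdot\YY)$, the admissible outputs of ${\tt LastProjComp}(A_k^\dag\cdot\YY,\FP^k)$ are $\{[\mathbf{w}]:\mathbf{w}\in W\smallsetminus\{\mathbf{0}\}\}$, those of ${\tt LastProjComp}((A_k')^\dag\cdot\YY,\FP^k)$ are their images under $Q^\dag$, and after left multiplication by $A_k'=A_kQ$ both families produce precisely $\{[A_k\mathbf{w}]:\mathbf{w}\in W\smallsetminus\{\mathbf{0}\}\}$; so a consistent choice yields the same $[\vv_k]$ independently of the basis. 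Everything else is the routine linear algebra of unitary changes of coordinates.
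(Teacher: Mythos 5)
Your proof is correct and follows essentially the same route as the paper's: both identify the change of basis with a unitary matrix (your $Q$ is the paper's $A_k^\dag B_k$), conjugate the uncentered covariance matrix by it, and transport eigenvectors of the smallest eigenvalue back via $A_k'Q^\dag=A_k$. Your closing remark about the ambiguity when the smallest eigenspace has dimension greater than one is a small but welcome addition that the paper leaves implicit.
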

\begin{proof} Let $\{\ww_0,\ldots, \ww_k\}$ be another orthonormal basis
for $V^k$ and let
\[
B_k = \Mat{| & & | \\ \ww_0 & \cdots & \ww_k \\ | & & |}
\]
It follows that
$B_k^\dag B_k = A_k^\dag A_k = I_{k+1}$, the
$(k+1)$-by-$(k+1)$ identity matrix,
and that $B_kB_k^\dag  = A_kA_k^\dag$ is the matrix
(with respect to the standard basis of $\F^{n+1}$)
of the orthogonal projection
$p_{V^k}: \F^{n+1} \longrightarrow V^k$.
Therefore
\begin{eqnarray*}
\left(
B^\dag_kA_k
\right)
\left(
A_k^\dag B_k
\right)
&=&
B^\dag_k \left(A_k  A_k^\dag\right) B_k \\
&=& B^\dag_k \left(B_k  B_k^\dag\right) B_k \\
&=& I_{r+1}
\end{eqnarray*}
which shows that $A^\dag_kB_k$
is an orthogonal matrix.
Since
\[
\left\|A_k^\dag \yy\right\|^2
=
\left\langle\yy, A_k A_k^\dag \yy\right\rangle
=
\left\|B_k^\dag \yy\right\|^2
\]
for every $\yy \in \F^{n+1}$,
then
\begin{eqnarray*}
\mathsf{Cov}\left(A^\dag_k \YY\right)
&=&
\mathsf{Cov}
\left(
\frac{A^\dag_k \yy_1}{\left\|A^\dag_k\yy_1\right\|}
, \ldots ,
\frac{A^\dag_k \yy_N}{\left\|A^\dag_k\yy_N\right\|}
\right) \\
&=&
\mathsf{Cov}
\left(A^\dag_kB_k
\frac{B^\dag_k \yy_1}{\left\|B^\dag_k\yy_1\right\|}
, \ldots ,
A^\dag_kB_k
\frac{B^\dag_k \yy_N}{\left\|B^\dag_k\yy_N\right\|}
\right) \\
&=&
A^\dag_kB_k \cdot
\mathsf{Cov}\left(B_k^\dag \YY\right)
\cdot
B^\dag_kA_k
\end{eqnarray*}
and  thus
$\mathsf{Cov}\left(A^\dag_k\YY\right)$
and $\mathsf{Cov}(B_k^\dag\YY)$ have the same spectrum.
Moreover,
$\uu$ is an eigenvector
of $\mathsf{Cov}\left(A^\dag_k\YY\right)$
corresponding to the smallest eigenvalue $\lambda$
if and only if
$\uu = A^\dag_kB_k\ww$
for a unique eigenvector $\ww$ of $\mathsf{Cov}\left(B^\dag_k\YY\right)$
with eigenvalue $\lambda$.
Since $B_k\ww \in V^k$
and $A_kA^\dag_k$ is the matrix of $p_{V^k}$, then
\[
A_k \uu = A_kA_k^\dag B_k \ww = B_k \ww
\]
which shows that
\[
A_k\cdot {\tt LastProjComp}\left(A_k^\dag \cdot\YY\;,\; \FP^k\right)
=
B_k\cdot {\tt LastProjComp}\left(B_k^\dag \cdot\YY\;,\; \FP^k\right)
\]
\end{proof}
This inductive procedure defines $[\vv_1],\ldots, [\vv_n] \in \FP^n$,
and we let $\vv_0 \in \F^{n+1}$ with $\|\vv_0\| = 1$ be
so that $\mathsf{Span}(\vv_0) = \mathsf{Span}(\vv_1,\ldots, \vv_n)^\perp$.
We will use the notation
\[
{\tt PrinProjComps}(\YY) = \big\{[\vv_0],\ldots, [\vv_n]\big\}
\]
for the principal projective components of $\YY$ computed in this fashion.
Each choice of unitary (i.e. having norm 1) representatives  $\uu_0 \in [\vv_0],\ldots, \uu_n \in [\vv_n]$
yields an orthonormal basis
$ \{\uu_0,\ldots, \uu_n\}$ for $\F^{n+1}$, and
each $\yy \in \F^{n+1}$ can be represented in terms of its vector of coefficients
\[
\mathsf{coeff}_U(\yy) =  \Mat{\langle\yy,\uu_0\rangle \\ \vdots \\ \langle\yy,\uu_n\rangle}
\]
If $\gor{U}$ is another set of unitary representatives
for ${\tt PrinProjComps}(\YY)$, then there exists a $(n+1)$-by-$(n+1)$
diagonal
matrix $\Lambda$, with entries in the unit circle in $\F$, and
so that
$\mathsf{coeff}_{\gor{U}}(\yy) = \Lambda\cdot \mathsf{coeff}_U(\yy)$.
That is, the resulting principal projective coordinates
$[\mathsf{coeff}_U(\yy)] \in \FP^n$ are unique
up to a diagonal  isometry.

\subsection*{Visualizing the Reduction}
Fix a set of unitary representatives $\vv_0,\ldots, \vv_n$
for ${\tt PrinProjComps}(\YY)$, and
let $V^k = \mathsf{Span}(\vv_0,\ldots, \vv_k)$
for $1\leq k \leq n$.
It is often useful to visualize
$P_{ V^k}(\mathbf{Y})\subset \FP_{V^k}^k$
for $k$ small,
specially in $\RP^1$, $\RP^2$, $\RP^3$ and $\CP^1$.
We do this using the principal projective coordinates of $\YY$.
For the real case (i.e. $\RP^1$, $\RP^2$ and $\RP^3$) we consider the set
\[
\left\{
\frac{\mathbf{x}_r}{\|\mathbf{x}_r\|} \in S^k \; \; \bigg| \; \;
\mathbf{x}_r \;=\; \mathsf{sign}\left(\langle\yy_r, \vv_0\rangle\right)\cdot\Mat{\langle\yy_r , \vv_0\rangle \\ \vdots \\ \langle\yy_r , \vv_{k}\rangle }
\;\; , \;\;\; r=1,\ldots, N
\right\}\;, \; k \leq 3
\]
and its image through the stereographic projection
$
S^k \smallsetminus\{-\mathbf{e}_1\} \longrightarrow D^k $
with respect to $-\mathbf{e}_1$, where $\mathbf{e}_1$ is
the first standard basis vector $\mathbf{e}_1 \in \R^{k+1}$.
That is, we visualize $P_{V^k}(\mathbf{Y})$ in the $k$-disk  $ D^k\subset \R^k$
with the understanding that antipodal points on the boundary are identified.
For the complex case (i.e. $\CP^1$) we consider   the set
\[
\left\{
\frac{\mathbf{z}_r}{\|\mathbf{z}_r\|} \in \C^2 \; \;:\; \;
\mathbf{z}_r = \Mat{\langle\yy_r , \vv_0\rangle \\ \langle\yy_r, \vv_1\rangle  }
\;\; , \;\;\; r=1,\ldots, N
\right\}
\]
and its image through the H\"{o}pf map
\[
\begin{array}{rccl}
H :& S^{3}\subset \C^2& \longrightarrow &S^2 \subset \C \times \R \\
&[z_1,z_2]&\mapsto&(2z_1\bar{z_2}, |z_1|^2 - |z_2|^2)
\end{array}
\]
which is exactly the composition of
$S^3 \subset \C^2 \longrightarrow \C_\infty = \C\cup \{\infty\}$,
sending $[z_1,z_2]$ to $z_1/z_2$,
and
 the isometry $\C_\infty \cong S^2\subset \C\times \R$ given by the inverse of the north-pole
stereographic projection.

\subsection*{Choosing the Target Dimension}
Given $1\leq k \leq n$, the cumulative variance recovered
by $P_{V^k}(\YY)\subset \FP_{V^k}^k$
is given by the expression
\begin{eqnarray}\label{eq:cumVar}
\mathsf{var}_\mathbf{Y}(k) &=&
\frac{1}{N}
\sum_{\ell=1}^{k} \sum_{r=1}^{N}
\mathbf{d}_g
\left(
P_{V^\ell}([\yy_r]) , \FP^{\ell -1}_{V^{\ell -1}}
\right)^2 \nonumber \\
&=&
\frac{1}{N}
\sum_{\ell=1}^k \sum_{r=1}^N
\left(
\frac{\pi}{2}  - \mathbf{d}_g\left(P_{V^{\ell-1}}([\yy_r]), \left[\vv_{\ell}\right]\right)
\right)^2
\end{eqnarray}
Define the percentage of cumulative variance as
\begin{equation}\label{eq:pert_of_cumVar}
\mathsf{p.var}_\mathbf{Y}(k) =
\frac{
\mathsf{var}_\mathbf{Y}(k)
}
{\mathsf{var}_\mathbf{Y}(n)}
\end{equation}

A common rule of thumb for choosing the target dimension
is  identifying the smallest value of $k$
so that $\mathsf{p.var}_\YY$ exhibits a prominent reduction in growth rate.
Visually, this creates an ``elbow'' in the graph of $\mathsf{p.var}_\YY$
at $k$
(see Figure \ref{fig:PCA and ISOMAP dim_VS_variance}(b), $k=5$).
The target dimension can also be chosen as the smallest $k\geq 1$ so that
$\mathsf{p.var}_\YY(k)$
is greater than a predetermined threshold, e.g. $0.7$
(see Figure \ref{fig:projdDim_vs_variance}, $k=2$).

\subsection*{Examples} Let us illustrate
the inner workings of the framework we have developed thus far.

\subsubsection*{The Projective Plane $\RP^2$:}
Let $\RP^2$ denote the quotient $S^2/(\uu \sim -\uu)$, endowed
with the geodesic distance  $\mathbf{d}_g(\uu,\vv) = \arccos(|\langle\uu ,\vv\rangle|)$.
 We begin by selecting six landmark points  $\ell_0,\ldots, \ell_5 \in \RP^2$  as shown in Figure \ref{fig:RP2_landmkars}(Left). If for each landmark $\ell_j$ we let
$r_j = \min\{\mathbf{d}_g(\ell_j,\ell_r)  : r\neq j\}$
 and let $\epsilon_j = 0.95*r_j$,
then $\mathcal{U} = \{B_{\epsilon_j}(\ell_j)\}$ is a covering for $\RP^2$
and the corresponding nerve complex $\mathcal{N}(\mathcal{U})$
is  shown in Figure \ref{fig:RP2_landmkars}(Right).
\begin{figure}[!ht]
  \centering
  \subfigure{
  \includegraphics[scale = 0.3]{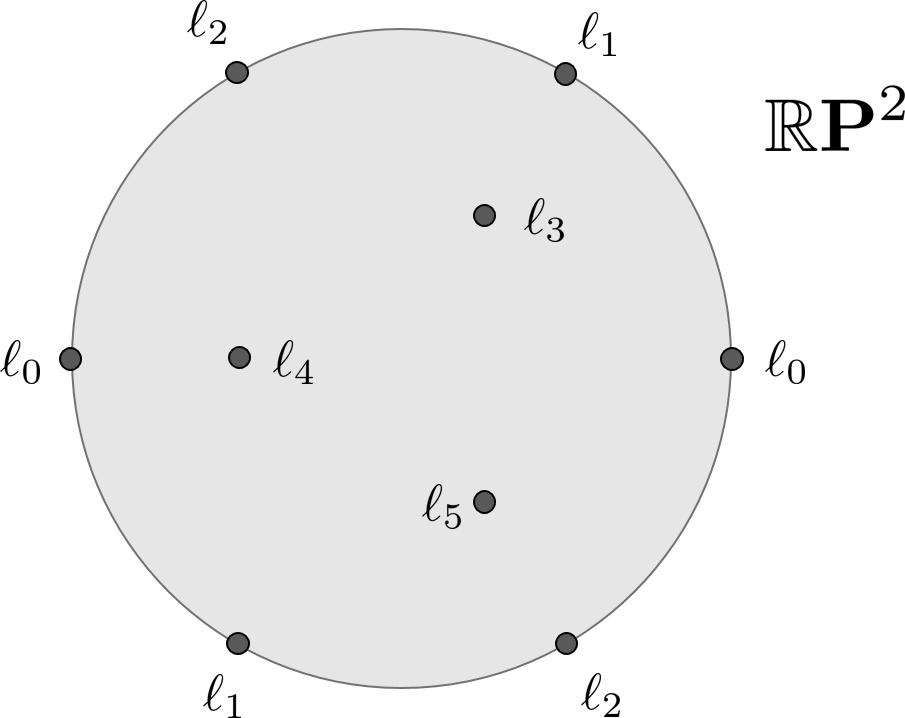}
  }
  \hspace{1cm}
  \subfigure{
  \includegraphics[scale = 0.3]{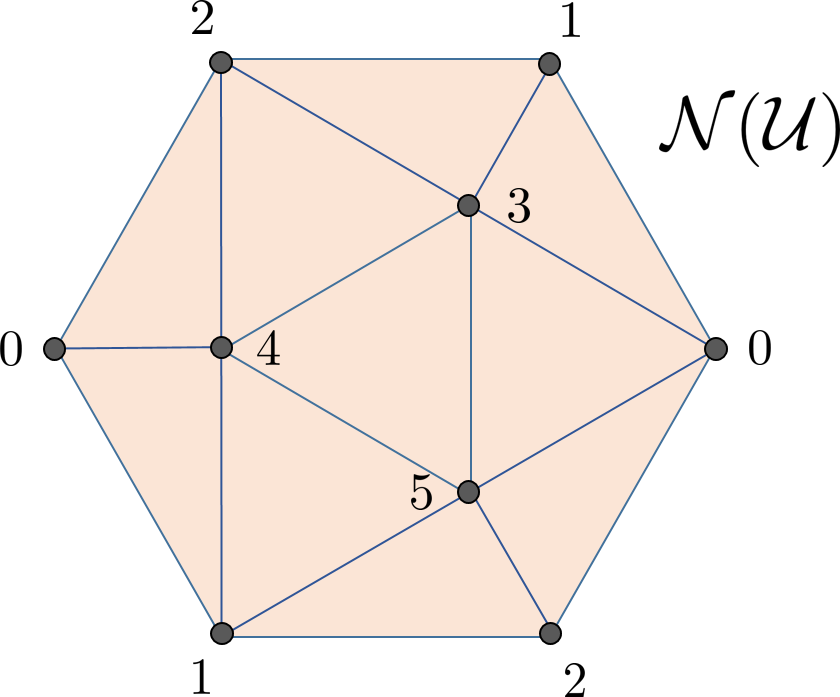}
  }
  \caption{\textbf{Left}: Landmark points on $\RP^2$, \textbf{Right}: Induced nerve complex $\mathcal{N}(\mathcal{U})$.}\label{fig:RP2_landmkars}
\end{figure}

Let
$\mathds{1}_{\{i,j\}} : \mathcal{N}(\mathcal{U})^{(1)} \longrightarrow \Z/2$
be the indicator function
$\mathds{1}_{\{i,j\}}(\{r,k\}) = 1 $ if $\{i,j\} = \{r,k\}$
and 0 otherwise.
Then
\[
\tau = \mathds{1}_{\{0,1\}} + \mathds{1}_{\{1,2\}} + \mathds{1}_{\{0,2\}}
+ \mathds{1}_{\{0,4\}} + \mathds{1}_{\{2,5\}} + \mathds{1}_{\{1,3\}}
\]
is  a 1-cocycle, and  its cohomology class $[\tau]$ is the non-zero
element in \[H^1(\mathcal{N}(\mathcal{U});\Z/2) \cong \Z/2\]
Using the formula from Theorem \ref{thm:ClassifyingMapFormula2},
the cocycle $\tau$ above, and  quadratic bumps
with weights $\lambda_j = \epsilon_j^2$,
we get the corresponding map $f_\tau : \RP^2 \longrightarrow \RP^5$.
For instance, if $b\in B_{\epsilon_0}(\ell_0)$, then
\begin{eqnarray*}
f_\tau(b) &=&
\left[
\ppart{\epsilon_0 - \mathbf{d}_g(b, \ell_0) } \;:\;
-\ppart{\epsilon_1 - \mathbf{d}_g(b, \ell_1) } \;:\;
-\ppart{\epsilon_2 - \mathbf{d}_g(b, \ell_2) } \;:\;\right. \\
&& \hspace{2cm }
\left.\ppart{\epsilon_3 - \mathbf{d}_g(b, \ell_3) } \;:\;
-\ppart{\epsilon_4 - \mathbf{d}_g(b, \ell_4) }\; :\;
\ppart{\epsilon_5 - \mathbf{d}_g(b, \ell_5) }
\right]
\end{eqnarray*}
Let $\XX \subset \RP^2$ be a uniform random sample with 10,000 points.
After computing the principal projective components of
$f_\tau(\XX)\subset \RP^5$ and the percentage of cumulative variance $\mathsf{p.var}_\YY(k)$
(see equation (\ref{eq:pert_of_cumVar}))
for $k=1,\ldots, 5$ we obtain the following:

\begin{figure}[!h]
  \centering
  \includegraphics[width =0.6\textwidth]{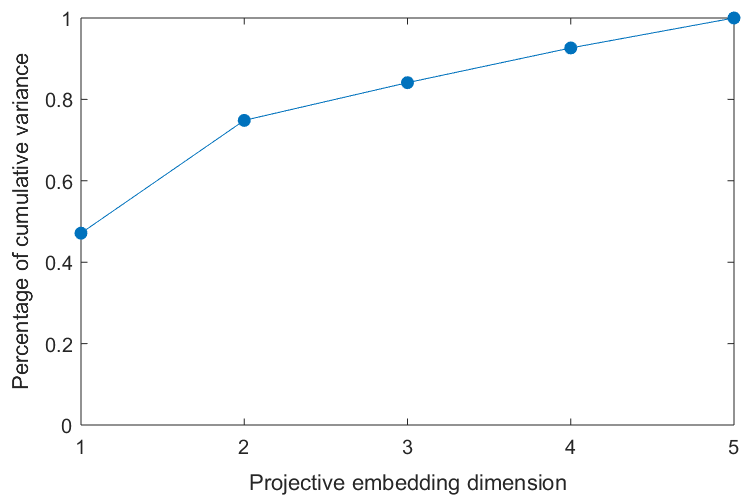}
  \caption{Percentage of cumulative variance}\label{fig:RP2_example_cumvar}
\end{figure}

This profile of cumulative variance suggests that dimension 2 is appropriate
for representing $f_\tau(\XX)$: both ``the elbow'' and the ``70\% of recovered
variance'' happen at around $k =2$.
Below in Figure \ref{fig:RP2_example_coordinates} we show the original sample $\XX \subset \RP^2$
as well as the point cloud $P_{V^2}(f_\tau(\XX))$ resulting from projecting  $f_\tau(\XX)$ onto $\RP^2_{V^2} \subset \RP^5$.
Recall that $P_{V^2}\big(f_\tau(\XX)\big)$ is visualized on the
unit disk $D^2 = \{\uu \in \R^2 \, : \, \|\uu\| \leq 1\}$,
with the understanding that points in the boundary $\partial D^2 = S^1$
are identified with their antipodes.
\begin{figure}[!ht]
  \centering
  \includegraphics[width=0.9\textwidth]{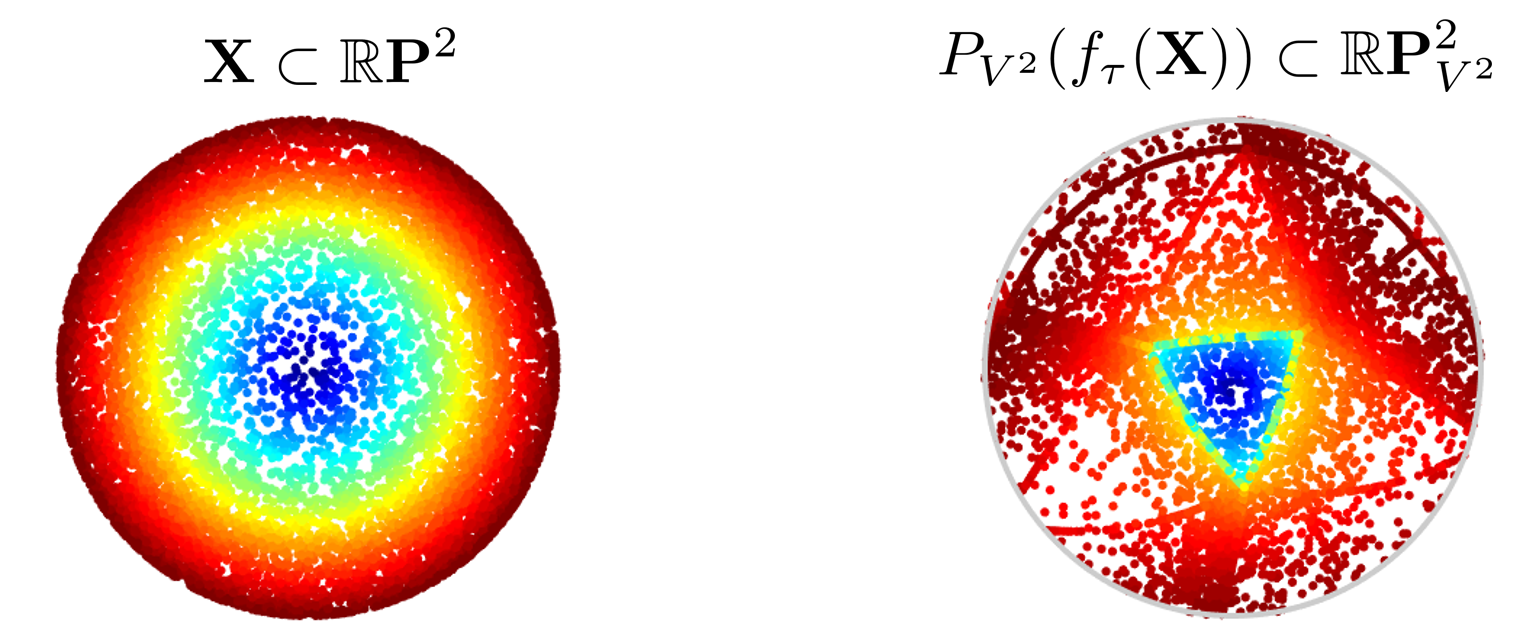}
  \caption{\textbf{Left:} Original sample $\XX \subset \RP^2$, \textbf{Right:} Visualization of resulting projective coordinates. Please refer to an electronic version for colors.}\label{fig:RP2_example_coordinates}
\end{figure}

These results are consistent with the fact that any
$f: \RP^2 \longrightarrow \RP^\infty$ which classifies the nontrivial bundle
over $\RP^2$, must be homotopic to the inclusion $\RP^2 \hookrightarrow \RP^\infty$.
So not only did we get the right homotopy-type, but also the global
geometry and the metric information were recovered to a large extent.

\subsubsection*{The Klein Bottle $K$:}
Let $K$ denote the quotient of the unit square
$[0,1]\times [0,1]$ by the relation $\sim$ given by
$(x,0) \sim (x,1)$ and $(0,y) \sim (1,1-y)$.
Let us endow $K$
 with the induced flat metric, which we denote by
$\mathbf{d}$, and let $\ell_0,\ldots, \ell_8 \in K$ be landmark points  selected
as shown in Figure \ref{fig:example_K_landmarks}(Left).
If for each landmark $\ell_j$ we let
\[\epsilon_j = \min\{\mathbf{d}(\ell_j, \ell_r) \; : \; j \neq r\}\]
then $\mathcal{U} = \{B_{\epsilon_j}(\ell_j)\}$ is a
covering for $K$ and the resulting nerve complex $\mathcal{N}(\mathcal{U})$ is shown in Figure \ref{fig:example_K_landmarks}(Right).
\begin{figure}[!ht]
  \centering
  \subfigure{
  \includegraphics[scale = 0.3]{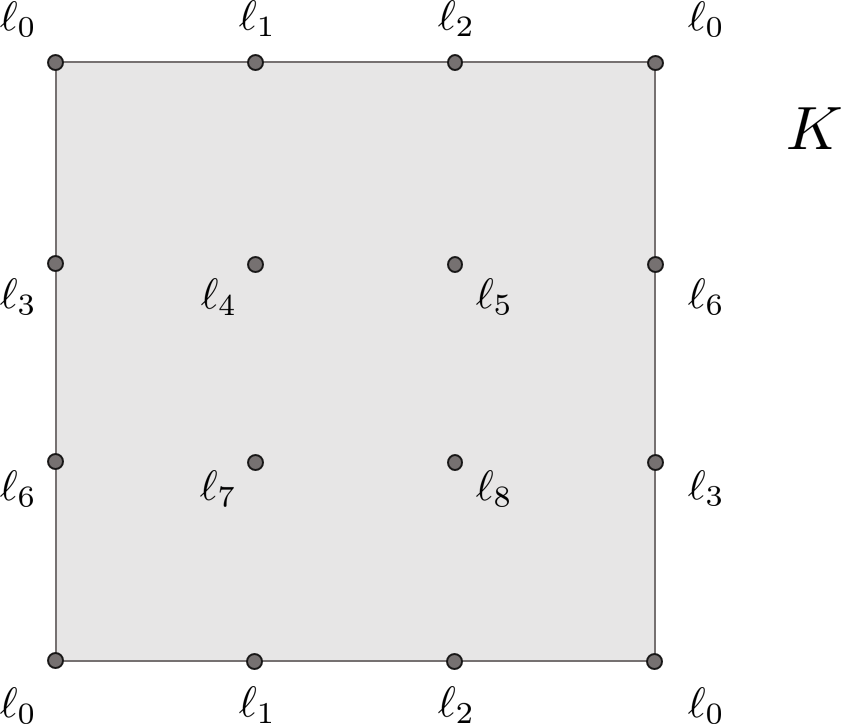}
  }
  \hspace{1.5cm}
  \subfigure{
  \includegraphics[scale = 0.3]{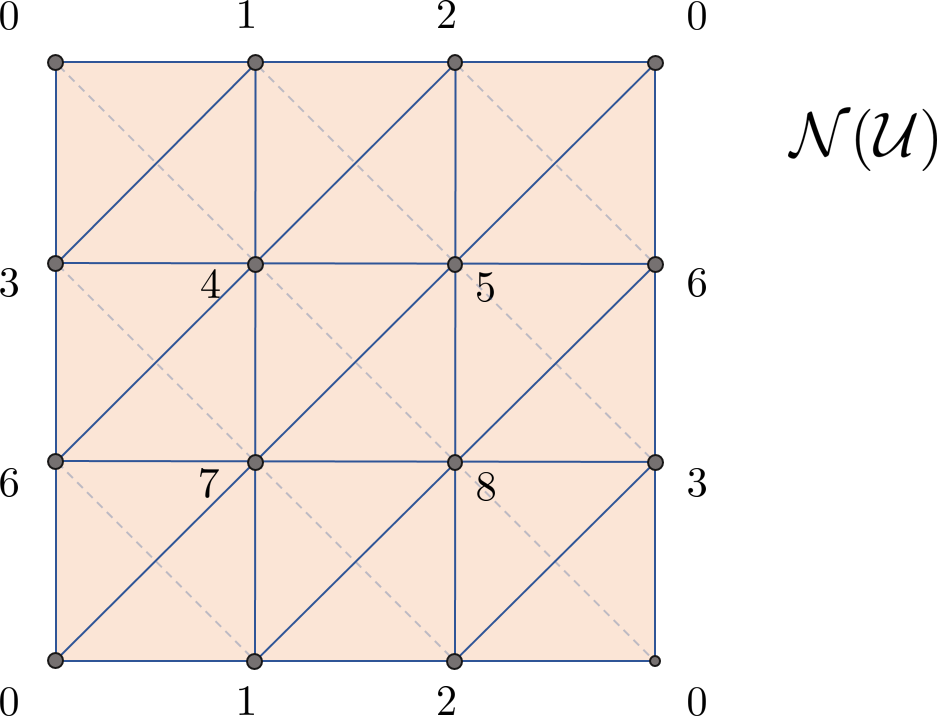}
  }
  \caption{\textbf{Lef:} Landmarks on the Klein bottle. \textbf{Right:} Induced nerve complex.}\label{fig:example_K_landmarks}
\end{figure}

It follows that the 1-skeleton of  $\mathcal{N}(\mathcal{U})$ is the
complete graph on nine vertices, and that there are
thirty-six 2-simplices and nine 3-simplices.
Let us  define the 1-chains
\[\tau_{\mathsf{diag}},\tau_{\mathsf{horz}},\tau_{\mathsf{vert}} \in C^1(\mathcal{N}(\mathcal{U});\Z/2)\] as follows:
$\tau_{\mathsf{diag}}$ will be the sum of indicator functions
on the diagonal edges,
$\tau_{\mathsf{horz}}$ is the sum of indicator functions on the
horizontal edges, and
$\tau_{\mathsf{vert}}$ will be the sum of indicator functios
on the vertical edges.
One can check that
\[
 \tau  =\tau_{\mathsf{diag}} + \tau_{\mathsf{horz}} \;\;\;\;\; \mbox{ and } \;\;\;\;\;
  \tau' = \tau_{\mathsf{diag}} + \tau_{\mathsf{vert}}
\]
are  coycles and that their cohomology classes
generate
\[H^1(\mathcal{N}(\mathcal{U});\Z/2) \cong \Z/2 \oplus \Z/2\]
Let $\XX \subset K$ be a random sample with 10,000 points.
The formula from Theorem \ref{thm:ClassifyingMapFormula2}
yields  classifying maps
\[
f_\tau, f_{\tau'}, f_{\tau + \tau'} : K \longrightarrow \RP^8
\]
and we obtain the point clouds
$f_\tau(\XX), f_{\tau'}(\XX), f_{\tau + \tau'}(\XX) \subset \RP^8$
of which we will compute their principal projective
coordinates.
Starting with $f_\tau(\XX)$ we get the profile of recovered
variance shown in Figure \ref{fig:example_K_cumvar}.

\begin{figure}[!ht]
  \centering
  \includegraphics[scale = 0.4]{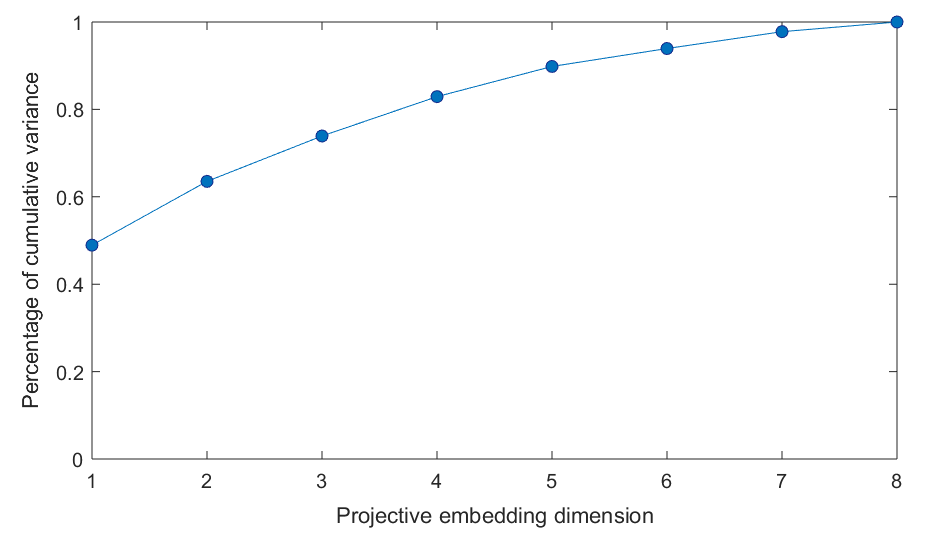}
  \caption{Percentage of cumulative variance for $f_\tau(\XX)$ }\label{fig:example_K_cumvar}
\end{figure}

The figure suggests that dimension 3 provides an appropriate
representation of $f_\tau(\XX) \subset \RP^8$.
As described above,
we visualize $P_{V^3}(f_\tau(\XX)) \subset \RP^3_{V^3} $
in the 3-dimensional unit disk
$D^3 = \{\uu \in \R^3 \, : \, \|\uu\| \leq 1\}$
with the understanding that points on the boundary
$\partial D^3 = S^2$ are identified with their antipodes.
The results are summarized in Figure \ref{fig:example_K_coordinates}.

\begin{figure}[!ht]
  \centering
  \includegraphics[width=0.9\textwidth]{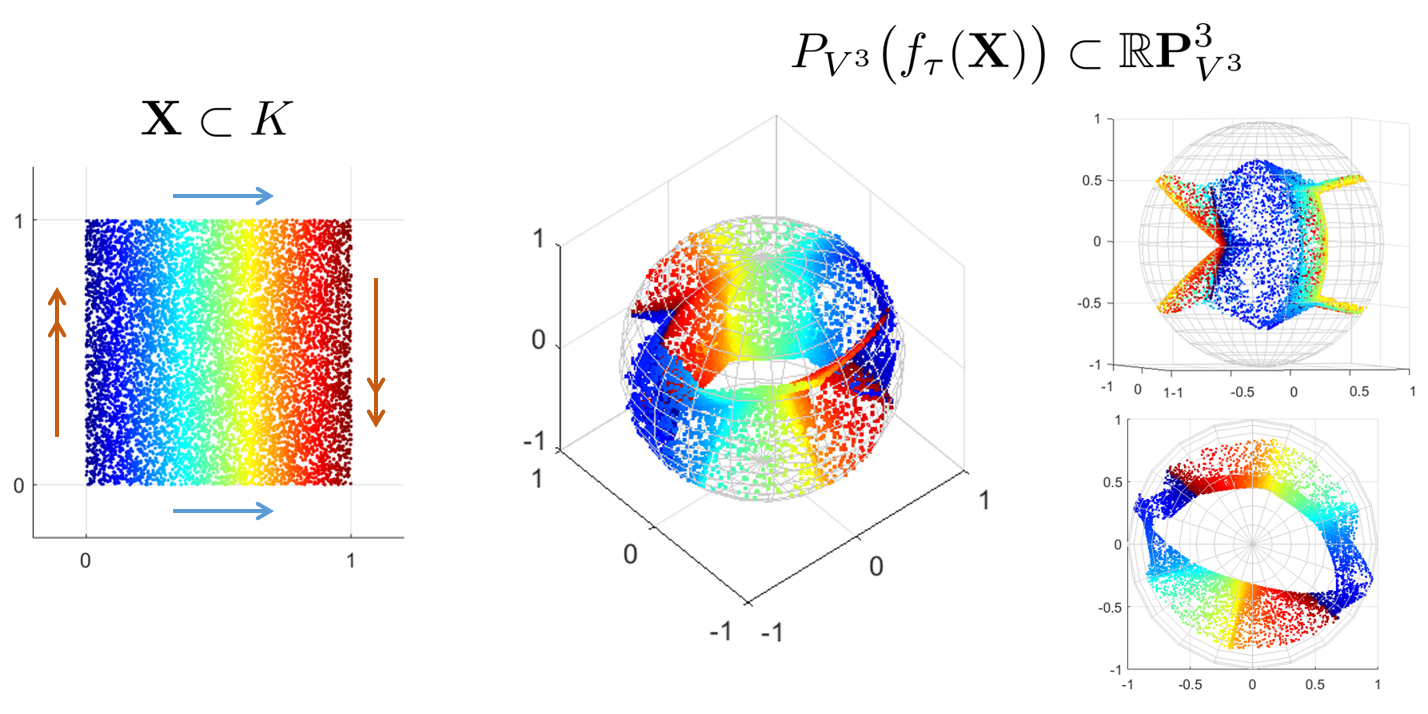}
  \caption{
\textbf{Left:} Original sample $\XX \subset K$; \textbf{Center:}
Visualization of $P_{V^3}\big(f_\tau(\XX)\big) \subset \RP^3_{V^3}$ in $D^3/\! \sim \;  = \RP^3$; \textbf{Right:} Lateral (upper right) and
 aerial view of the representation (lower right). Please refer to an electronic version for colors.}
  \label{fig:example_K_coordinates}
\end{figure}

This example highlights the following point:
when representing data sampled from complicated
spaces, e.g. the Klein bottle, it is advantageous to use
target spaces with similar properties.
In particular, the representation for $\XX \subset K$ we recover here
is much simpler than those obtained with traditional dimensionality
reduction methods.
We now transition to the 2-dimensional reduction $P_{V^2}(f_\tau(\XX)) \subset \RP^2_{V^2}$.
As before we visualize the representation in the 2-dimensional unit disk
$D^2$ with the understanding that points on the boundary
$\partial D^2 = S^1$ are identified with their antipodes.
\begin{figure}[!ht]
  \centering
  \includegraphics[width=0.8\textwidth]{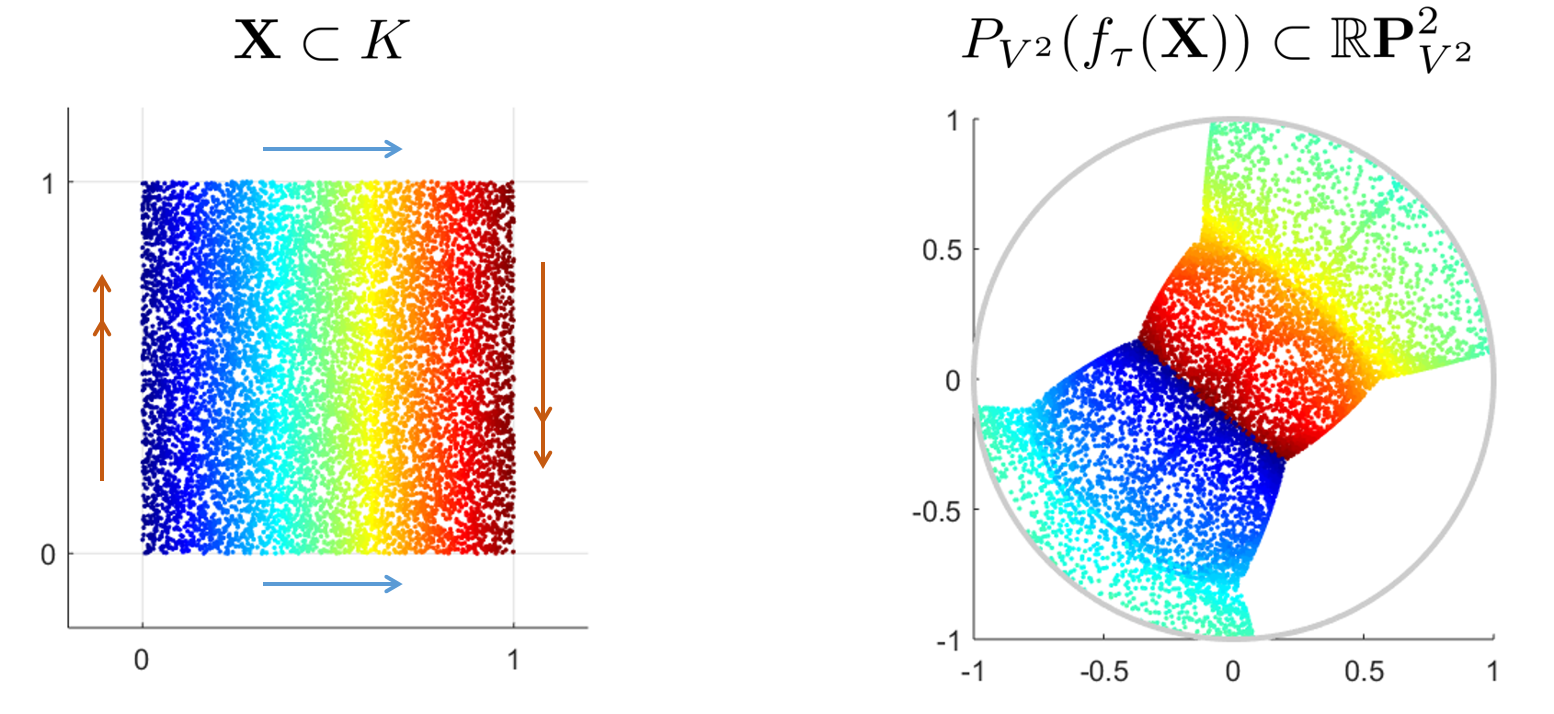}
  \caption{$\RP^2$-coordinates for $\mathbf{X}
  \subset K$ corresponding to the cocycle $\tau$. }\label{fig:example_K_coordinates_RP3a}
\end{figure}

We conclude this example by examining  the $\RP^2$ coordinates induced by
 $\tau'$ and $\tau + \tau'$ (Figure \ref{fig:exampel_K_coordinates_RP2a}).
For completeness we include the one for $\tau$ and also add
figures with coloring by the vertical direction in $K$.

\begin{figure}[!ht]
  \centering
  \includegraphics[width=\textwidth]{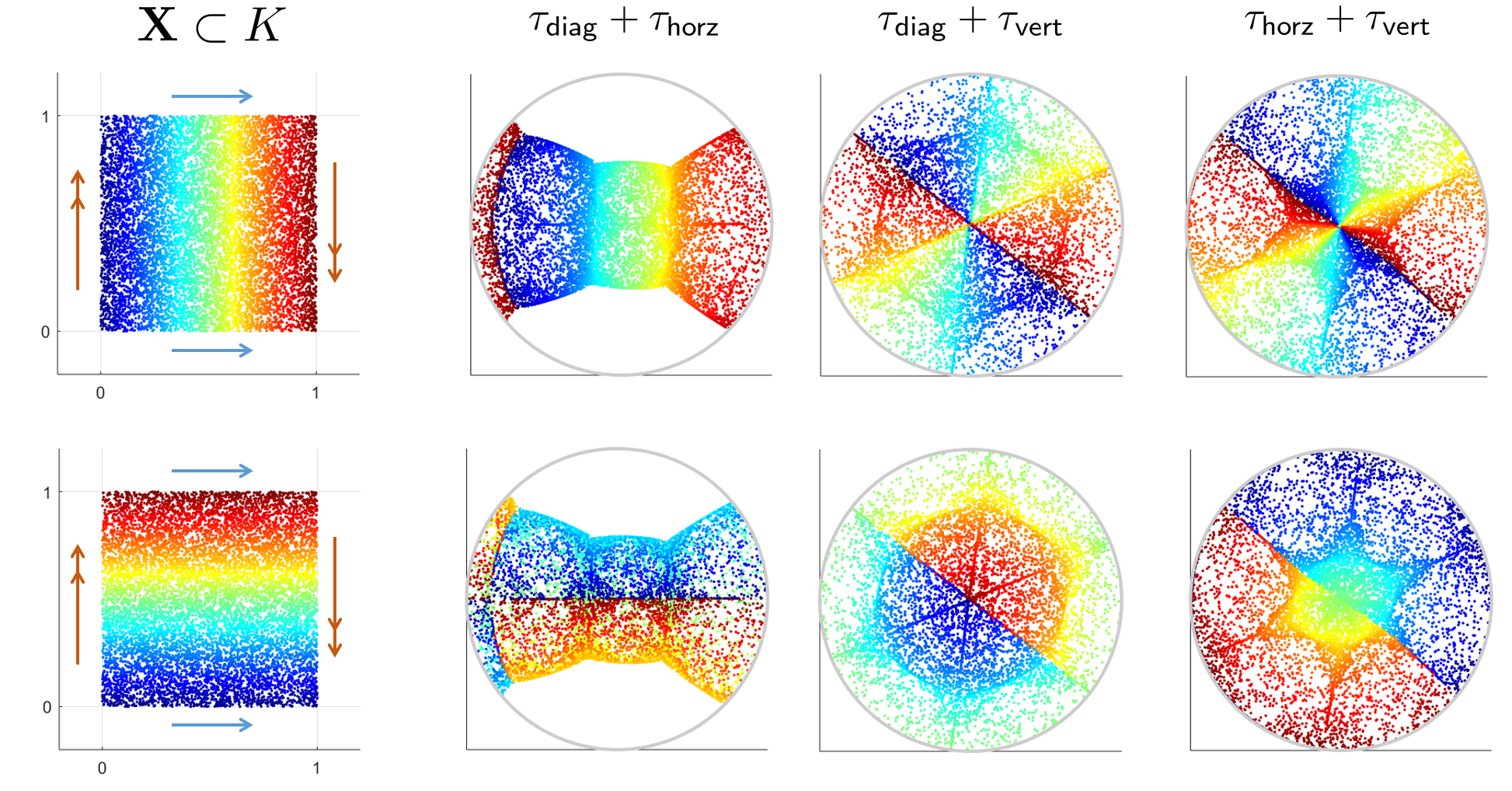}
  \caption{\textbf{Columns:} $\RP^2$ coordinates for $\XX\subset K$ induced by the cocycles $\tau = \tau_\mathsf{diag} + \tau_\mathsf{horz}$, $\tau' = \tau_\mathsf{diag} + \tau_\mathsf{vert}$ and $\tau + \tau' = \tau_\mathsf{horz} + \tau_\mathsf{vert}$, respectively.
\textbf{Rows:} Color schemes of the computed coordinates according
to the horizontal and vertical directions in $K$.
Please refer to an electronic version for colors.}\label{fig:exampel_K_coordinates_RP2a}
\end{figure}

\section{Choosing Cocycle Representatives}
\label{sec:ChoosingCocycles}
We now describe how  $f_\tau: B \longrightarrow \RP^n$
and $f_\eta : B \longrightarrow \CP^n$ depend on the choice of
representatives $\tau = \{\tau_{rt}\} \in Z^1(\mathcal{N}(\mathcal{U});\Z/2)$
and $\eta = \{ \eta_{rst}  \} \in Z^2(\mathcal{N}(\mathcal{U});\Z)$, respectively.
We know that any two such choices yield homotopic maps (Theorem \ref{thm:ClassifyingMapFormula2}),
but intricate geometries can negatively impact the dimensionality reduction step.
Given $\XX = \{\xx_1,\ldots, \xx_N\} \subset B$,
the goal is to elucidate the affects on
the principal projective coordinates of  $f_\tau(\XX) \subset \RP^n$
and $f_\eta(\XX) \subset \CP^n$.
The results are:
the real case is essentially independent of the cocycle representative;
while the complex case requires the harmonic cocycle.

We begin with a simple observation. Let
$\YY = \big\{[\yy_1],\ldots, [\yy_N]\big\}\subset \FP^n$, let
$A$ be a $(n+1)\times (n+1)$ orthogonal matrix with entries in $\F$,
that is $A\in \mathsf{O}(n+1,\F)$,
and let $A\cdot \YY$ denote the set $\big\{[A\cdot \yy_1], \ldots, [A\cdot\yy_N]\big\}$.

\begin{proposition}
Let $A\in \mathsf{O}(n+1,\F)$ and let
 $\YY\subset \FP^n$  be finite. Then
\[
{\tt PrinProjComps}(A\cdot \YY) = A\cdot {\tt PrinProjComps}(\YY)
\]
\end{proposition}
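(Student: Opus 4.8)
The plan is to show that each step of the inductive construction of {\tt PrinProjComps} is equivariant under the orthogonal action. The key observation is that {\tt LastProjComp} depends on $\YY$ only through the uncentered covariance matrix $\mathsf{Cov}(\YY)$, and that $\mathsf{Cov}(A\cdot\YY) = A\,\mathsf{Cov}(\YY)\,A^\dag$ whenever $A$ is orthogonal, since $\|A\yy_r\| = \|\yy_r\|$ forces $\frac{A\yy_r}{\|A\yy_r\|} = A\frac{\yy_r}{\|\yy_r\|}$, so the two column-times-row products are conjugate by $A$. Consequently $\uu$ is an eigenvector of $\mathsf{Cov}(\YY)$ for the smallest eigenvalue if and only if $A\uu$ is an eigenvector of $\mathsf{Cov}(A\cdot\YY)$ for the same eigenvalue; hence ${\tt LastProjComp}(A\cdot\YY,\FP^n) = A\cdot{\tt LastProjComp}(\YY,\FP^n)$. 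This handles the base case $[\vv_n]$.

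First I would set up the induction: suppose the top components satisfy $[\vv_j^{A\cdot\YY}] = A\cdot[\vv_j^\YY]$ for $j = k+1,\ldots,n$, and examine the construction of $[\vv_k]$. The subspace $V^k = \mathsf{Span}(\vv_{k+1},\ldots,\vv_n)^\perp$ for the rotated data is then $A\cdot V^k$ of the original, so if $\{\uu_0,\ldots,\uu_k\}$ is an orthonormal basis of the original $V^k$, then $\{A\uu_0,\ldots,A\uu_k\}$ is an orthonormal basis for the rotated one. By the Proposition guaranteeing independence of the choice of orthonormal basis, we may compute $[\vv_k^{A\cdot\YY}]$ using precisely this basis, whose coordinate matrix is $A\,A_k$. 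Then $(A A_k)^\dag\cdot(A\cdot\YY) = A_k^\dag A^\dag A\cdot\YY = A_k^\dag\cdot\YY$, so the inner ${\tt LastProjComp}$ call receives exactly the same input as in the computation of $[\vv_k^\YY]$, giving
\[
[\vv_k^{A\cdot\YY}] = (A A_k)\cdot{\tt LastProjComp}(A_k^\dag\cdot\YY,\FP^k) = A\cdot\big(A_k\cdot{\tt LastProjComp}(A_k^\dag\cdot\YY,\FP^k)\big) = A\cdot[\vv_k^\YY].
\]
Finally, $[\vv_0]$ is determined (up to scalar) by $\mathsf{Span}(\vv_0) = \mathsf{Span}(\vv_1,\ldots,\vv_n)^\perp$, which is carried to its $A$-image since $A$ preserves orthogonal complements; so $[\vv_0^{A\cdot\YY}] = A\cdot[\vv_0^\YY]$ as well, completing the induction.

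The main obstacle is bookkeeping rather than conceptual: one must be careful that ${\tt LastProjComp}$ and the whole construction are only defined up to a scalar of modulus one in each component, so every equality above is an equality of points in $\FP^n$ (or of one-dimensional eigenspaces), not of vectors; invoking the basis-independence Proposition is what legitimizes choosing the convenient basis $\{A\uu_i\}$ and thereby collapsing the computation. I expect the verification that $\mathsf{Cov}(A\cdot\YY) = A\,\mathsf{Cov}(\YY)\,A^\dag$ and the reduction $(AA_k)^\dag(A\cdot\YY) = A_k^\dag\cdot\YY$ to be routine, and the only subtlety worth stating explicitly is the well-definedness caveat already flagged after the definition of {\tt LastProjComp}.
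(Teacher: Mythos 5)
Your proposal is correct and follows the same route as the paper: the conjugation identity $\mathsf{Cov}(A\cdot\YY)=A\,\mathsf{Cov}(\YY)\,A^\dag$ (using $\|A\yy_r\|=\|\yy_r\|$), the resulting eigenvector correspondence for {\tt LastProjComp}, and then equivariance of the inductive step. The paper compresses that last step into one sentence, whereas you spell it out via the basis-independence proposition and the reduction $(AA_k)^\dag(A\cdot\YY)=A_k^\dag\cdot\YY$; this is a faithful and slightly more careful rendering of the same argument.
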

\begin{proof}
Since $A$ is an orthogonal matrix, then
\[
\mathsf{Cov}(A\cdot\YY )
=
A \cdot \mathsf{Cov} (\YY) \cdot A^\dag
\]
Hence, if $\uu$ is an eigenvector of
$\mathsf{Cov}(\YY)$,
then $A\cdot \uu$ is an eigenvector of
$\mathsf{Cov}(A\cdot \YY)$
with the same eigenvalue
and therefore, if
${\tt LastProjComp}\left(\YY,\FP^n\right) = [\vv_n]$,
then
\[
{\tt LastProjComp}\left(A\cdot \YY,\FP^n\right) = [A\cdot\vv_n]
\]
Since the remaining principal projective components are
computed in the same fashion, after the appropriate orthogonal projections,
the result follows.
\end{proof}

\subsection*{The Real Case is Independent of the Cocycle Representative}
Let $\alpha = \{\alpha_r\} \in C^0(\mathcal{N}(\mathcal{U});\Z/2)$ and let
$\gor{\tau} = \tau + \delta^0(\alpha)$.
It follows that for $b\in U_j$
\begin{eqnarray*}
f_{\gor{\tau}}(b) &=&
\left[
(-1)^{\gor{\tau}_{0j}}\sqrt{\varphi_0(b)}
: \cdots :
(-1)^{\gor{\tau}_{nj}}\sqrt{\varphi_n(b)}
\right] \\ \\
&=&
\left[
(-1)^{\tau_{0j} + \alpha_0}\sqrt{\varphi_0(b)}
: \cdots :
(-1)^{\tau_{nj} + \alpha_n}\sqrt{\varphi_n(b)}
\right]
\end{eqnarray*}
Hence, if $f_\tau (\XX) = \big\{[\yy_1],\ldots, [\yy_N]\big\}$ and
\[
A_\alpha = \Mat{(-1)^{\alpha_0} & & \mbox{\Large0} \\  & \ddots& \\ \mbox{\Large0}& & (-1)^{\alpha_n}}
\]
then
$f_{\gor{\tau}}(\XX) = \big\{[A_\alpha\cdot \yy_1] ,\ldots, [A_\alpha\cdot \yy_N]\big\}
= A_\alpha \cdot f_\tau(\XX)$.
This shows that
\[
{\tt PrinProjComps}(f_{\gor{\tau}}(\XX)) =
A_\alpha\cdot {\tt PrinProjComps}(f_\tau(\XX))
\]
which implies, in particular, that the profiles of cumulative variance
for $f_{\gor{\tau}}(\XX)$ and $f_\tau(\XX)$ are identical.
Moreover, the resulting projective coordinates for both point-clouds  differ by the
 isometry of $\FP^n$ induced by $A_\alpha$.
\subsection*{The  Harmonic Representative is Required for the Complex Case}
Just as we did in Section \ref{subsection:Geometric Interpretation}(Geometric Interpretation),
 given $\eta = \{\eta_{rst}\}\in Z^2(\mathcal{N}(\mathcal{U});\Z)$
we can express
$f_\eta : B \longrightarrow \CP^n$ as
\[
\begin{tikzcd}
f_\eta:B\arrow{r}{\varphi} & \left| \mathcal{N}(\mathcal{U})\right| \arrow{r}{F_\eta}& \CP^n
\end{tikzcd}
\]
where $\varphi$ is defined in equation (\ref{eq:phiMapNerveRealization})
and $F_\eta: |\mathcal{N}(\mathcal{U})|\longrightarrow \CP^n$
is given (in barycentric coordinates)
on the open star of a vertex $\vv_j$ by
\[
F_\eta(x_0,\ldots, x_n) =
\left[
\sqrt{x_0}\cdot
e^{2\pi i \sum\limits_t (x_t\cdot \eta_{0jt})}
: \cdots :\sqrt{x_n}\cdot
e^{2\pi i \sum\limits_t (x_t \cdot \eta_{njt})}
\right]
\]
Let us describe the local behavior of $F_\eta$
when restricted to the 2-skeleton of $|\mathcal{N}(\mathcal{U})|$.
To this end, let $\sigma$ be the 2-simplex of $|\mathcal{N}(\mathcal{U})|$
spanned by the vertices $\vv_r, \vv_s, \vv_t$, with $0\leq r < s < t\leq n$.
It follows that $F_\eta : \sigma \longrightarrow \CP^n$  can be written as
\begin{equation}\label{eq:valueOn2Simplex}
F_\eta (x_r,x_s,x_t) =
\Big[
\sqrt{x_r} :
\sqrt{x_s} \cdot e^{-2\pi i\cdot \eta_{rst}\cdot x_t} :
\sqrt{x_t} \cdot e^{2\pi i \cdot \eta_{rst}\cdot x_s}
\Big]
\end{equation}
with the understanding that only   the potentially-nonzero entries appear.
Furthermore, if we fix $0 < c < 1$ and consider the straight  line in
$\sigma$ given by
\[
L_c = \Big\{(1-c,x, c-x) \; : \; 0 \leq x \leq c \Big\}
\]
then $F_\eta:L_c\longrightarrow \CP^n $ can be written as
\begin{eqnarray*}
F_\eta(x) &=&
\Big[
\sqrt{1-c}\; :\; \sqrt{x} \cdot e^{2\pi i (x-c)\eta_{rst}}\; :\; \sqrt{c-x}\cdot e^{2\pi i \cdot x \cdot \eta_{rst}}
\Big] \\
&=&
\Big[
\sqrt{1-c}\cdot e^{-2\pi i \cdot x \cdot \eta_{rst}} \;:\; \sqrt{x}\cdot e^{-2\pi i \cdot c\cdot \eta_{rst}}\; :\;
\sqrt{c - x}
\Big]
\end{eqnarray*}
which parametrizes a spiral with radius $\sqrt{1-c}$
and winding number
$\left\lfloor c\cdot |\eta_{rst}| \right\rfloor$.
Hence, as each $|\eta_{rst}|$ gets larger,
 $F_\eta$ becomes increasingly highly-nonlinear
on the 2-simplices of $|\mathcal{N}(\mathcal{U})|$.
As a consequence, the dimensionality reduction scheme
furnished by principal projective components
is less likely to work as it relies
on a (global) linear approximation.
Let us illustrate this phenomenon via an example.
\\[.2cm]
\noindent\underline{Example:} Let $B = S^2$ be the unit sphere in $\R^3$,
and for $r\in \{0,1,2,3\}$ let $U_r\subset S^2$ be the geodesic open ball of
radius $ \arccos(-1/3)$ centered at
\[
\ell_0 = \Mat{0 \\ 0 \\ 1}  \;\; , \;\;
\ell_1 = \frac{1}{3}\Mat{2\sqrt{2} \\ 0 \\ -1} \;\; , \;\;
\ell_2 = \frac{1}{3}\Mat{ -\sqrt{2} \\ \sqrt{6} \\ -1}  \;\; , \;\;
\ell_3 = \frac{1}{3}\Mat{-\sqrt{2} \\ -\sqrt{6} \\ -1}
\]
respectively.
It follows that $\mathcal{U}= \{U_0,U_1,U_2,U_3\}$ is an open cover of of $S^2$,
and that $\mathcal{N}(\mathcal{U})$ is the boundary of
the 3-simplex.
Therefore,
if
\[
\sigma_0 = \{0,1,2\} \;\; , \;\;
\sigma_1 = \{0,2,3\} \;\; , \;\;
\sigma_2 = \{0,1,3\} \;\; , \;\;
\sigma_3 = \{1,2,3\}
\]
denote the 2-simplices of $\mathcal{N}(\mathcal{U})$ and
$\{\eta^0 ,\eta^1,\eta^2, \eta^3\}$ is the  basis for $C^2(\mathcal{N}(\mathcal{U});\Z)$ of indicator functions,
then each $\eta^r$ is a cocycle whose cohomology class generates
\[H^2(\mathcal{N}(\mathcal{U});\Z) \cong \Z\]
Moreover, $\big\{\eta^0 - \eta^1\;,\; \eta^0 + \eta^2\;,\; \eta^0 + \eta^3 \big\}$
is a basis for
$B^2(\mathcal{N}(\mathcal{U});\Z)$ and therefore
\[
[\eta^0] = [\eta^1] = - [\eta^2] = -[\eta^3]
\]
Consider the map $f_\eta : S^2 \longrightarrow \CP^3$
associated to $\eta = \eta^0$; the results
will be similar for the other $\eta^r$'s.
We show in Figure \ref{fig:example_S2_coordinates_CP1} the computed $\CP^1$ coordinates of
$f_\eta(\XX)$  for a random sample $\XX \subset S^2$
with 10,000 points. As one can see, the homotopy type of the resulting
map is correct, but the distances are completely distorted.

\begin{figure}[!ht]
  \centering
  \includegraphics[width=0.7\textwidth]{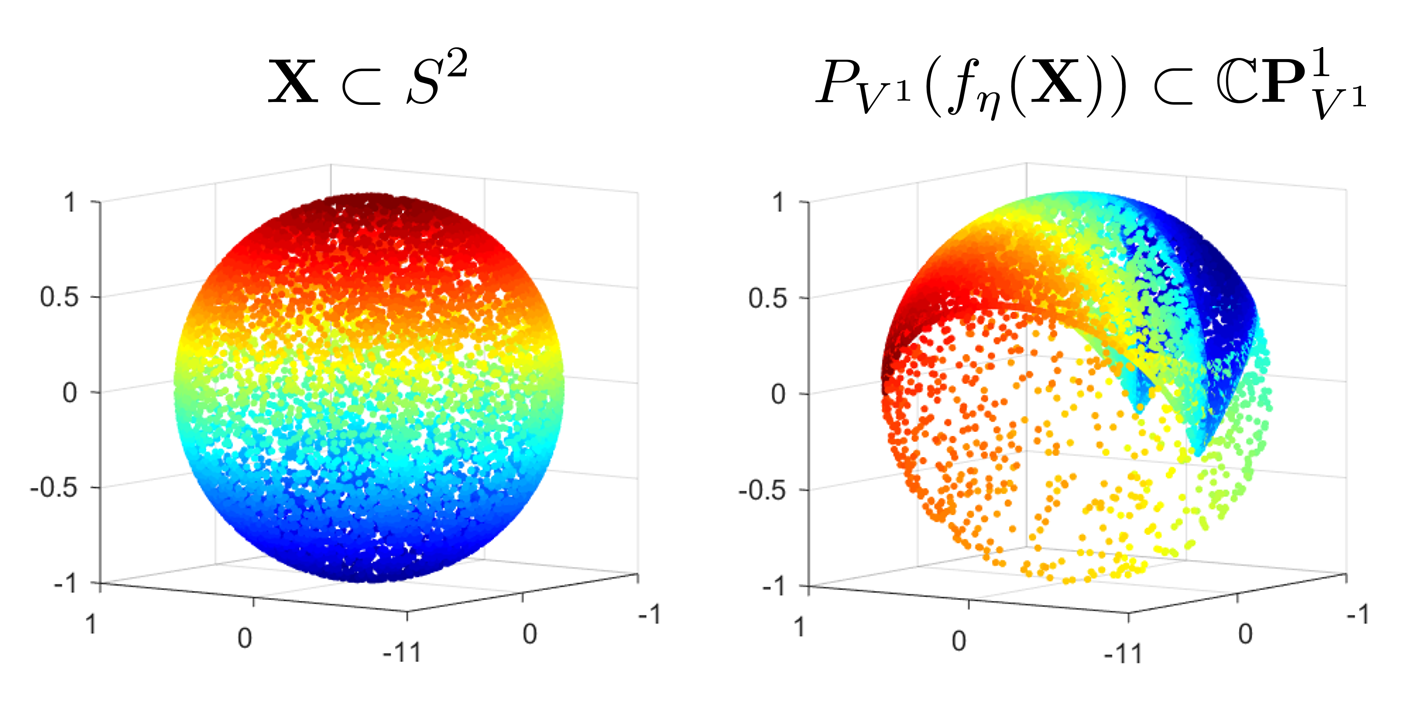}
  \caption{$\CP^1$ coordinates for points on the 2-sphere
  using the map $f_\eta$  associated to the integer cocycle $\eta = \eta^0$.
This shows the inadequacy of the integer cocycle; see Figure \ref{fig:example_S2_coordinates_CP1b} for comparison.
Please refer to an electronic version for colors.}
  \label{fig:example_S2_coordinates_CP1}
\end{figure}

The main difference between the real and  complex cases is that the
former is locally linear, while the latter has  local nonlinearities
arising from the terms
\[\exp\big(2\pi i \cdot
\eta_{rst} \cdot x_t \big).\]
The tempting conclusion would be then to choose the cocycle representative
$\eta = \{\eta_{rst}\} \in Z^2(\mathcal{N}(\mathcal{U});\Z)$
that makes $F_\eta$  as locally linear as possible.
This can be achieved by making each $|\eta_{rst}|$  small.
The problem --- as in the sphere example --- is that since $\eta_{rst} \in \Z$,
then even this choice
is inadequate.
What we will see now is that the integer constraint can be relaxed via
Hodge theory (see, for instance, Section 2 of \cite{lim2015hodge}).

\subsubsection*{Harmonic Smoothing}
Let $K$ be a finite simplicial complex.
Then for each $n\geq 0$ the group of $n$-cochains
$C^n(K;\R)$  is a
finite dimensional vector space over $\R$,
and hence can be endowed with an inner product.
A common choice is
\[
\langle\beta_1,\beta_2\rangle_n = \sum_{\sigma} \beta_1(\sigma)\beta_2(\sigma)
\]
where $\beta_1,\beta_2 \in C^n(K;\R)$ and the sum ranges over all
 $n$-simplices
$\sigma$ of $K$.
In particular we have the induced norm
\[
\|\beta\|^2 = \sum_{\sigma} |\beta(\sigma)|^2
\]
Each boundary map $\delta^n : C^n(K;\R) \longrightarrow C^{n+1}(K;\R)$
is therefore a linear transformation between inner-product spaces, and hence
has an associated dual map
$d_{n+1} : C^{n+1}(K;\R) \longrightarrow C^n(K;\R)$
uniquely determined by the identity
\[
\left\langle\,\delta^n(\nu)\;,\; \beta \,\right\rangle_{n+1}
=
\left\langle\,\nu\;,\; d_{n+1}(\beta)\,\right\rangle_n
\]
for all $\nu \in C^n(K;\R)$ and all $\beta \in C^{n+1}(K;\R)$.
The Hodge Laplacian $\Delta_n$ is the endomorphism of $C^n(K;\R)$
defined by the formula
\[
\Delta_n = d_{n+1}\circ \delta^n \;+\; \delta^{n-1}\circ d_n
\]
and a cochain $\theta \in C^n(K;\R)$ is said to be harmonic
if $\Delta_n(\theta) = 0$.
A simple linear algebra argument shows that
Harmonic cochains can be characterized as follows:
\begin{proposition} $\theta \in C^n(K;\R)$ is harmonic if and only if
\[d_n(\theta) = 0\;\;\; \mbox{ and }\;\;\; \delta^n(\theta)= 0.\]
\end{proposition}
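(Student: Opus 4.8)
The plan is to prove the two implications separately, the ``if'' direction being immediate from the definition of $\Delta_n$ and the ``only if'' direction following from a single inner-product computation that exploits the adjointness relations defining $d_{n}$ and $d_{n+1}$.

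For the easy direction, suppose $d_n(\theta) = 0$ and $\delta^n(\theta) = 0$. Then, directly from the formula $\Delta_n = d_{n+1}\circ \delta^n + \delta^{n-1}\circ d_n$, we get
\[
\Delta_n(\theta) = d_{n+1}\big(\delta^n(\theta)\big) + \delta^{n-1}\big(d_n(\theta)\big) = d_{n+1}(0) + \delta^{n-1}(0) = 0,
\]
so $\theta$ is harmonic.

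For the converse, I would assume $\Delta_n(\theta) = 0$ and pair this identity with $\theta$ in the inner product $\langle\,\cdot\,,\,\cdot\,\rangle_n$, obtaining
\[
0 = \big\langle \Delta_n(\theta)\,,\,\theta\big\rangle_n = \big\langle d_{n+1}\big(\delta^n(\theta)\big)\,,\,\theta\big\rangle_n + \big\langle \delta^{n-1}\big(d_n(\theta)\big)\,,\,\theta\big\rangle_n.
\]
Now the key step is to move each operator to the other side of the pairing using the defining adjointness identities: since $d_{n+1}$ is the dual of $\delta^n$, the first term equals $\langle \delta^n(\theta)\,,\,\delta^n(\theta)\rangle_{n+1} = \|\delta^n(\theta)\|^2$; and since $d_n$ is the dual of $\delta^{n-1}$, the second term equals $\langle d_n(\theta)\,,\,d_n(\theta)\rangle_{n-1} = \|d_n(\theta)\|^2$. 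Hence $0 = \|\delta^n(\theta)\|^2 + \|d_n(\theta)\|^2$, and since each summand is nonnegative, both must vanish, giving $\delta^n(\theta) = 0$ and $d_n(\theta) = 0$.

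There is essentially no serious obstacle here; the only point requiring care is bookkeeping — matching each composite term of $\Delta_n$ with the correct adjoint relation and keeping track of the grading of the three inner products $\langle\,\cdot\,,\,\cdot\,\rangle_{n-1}$, $\langle\,\cdot\,,\,\cdot\,\rangle_n$, $\langle\,\cdot\,,\,\cdot\,\rangle_{n+1}$ involved — together with noting that positive-definiteness of the inner product is what forces the two norms to be zero. No finiteness or nondegeneracy hypothesis beyond the finiteness of $K$ already in force is needed.
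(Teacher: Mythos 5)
Your proof is correct, and it is exactly the ``simple linear algebra argument'' the paper alludes to without writing out: the adjointness identities convert $\langle \Delta_n(\theta),\theta\rangle_n$ into $\|\delta^n(\theta)\|^2 + \|d_n(\theta)\|^2$, and positive-definiteness of the inner product $\langle\beta_1,\beta_2\rangle_n = \sum_\sigma \beta_1(\sigma)\beta_2(\sigma)$ forces both summands to vanish. Nothing is missing.
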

That is, harmonic cochains are in particular  cocycles.
Moreover
\begin{theorem} Every  class $[\beta] \in H^n(K;\R)$
is represented by a unique harmonic cocycle $\theta \in Z^n(K;\R)$
satisfying $\theta = \beta - \delta^{n-1}(\nu^*)$, where
\[
\nu^* = \argmin\big\{ \left\| \beta - \delta^{n-1}(\nu)\right\| : \nu \in C^{n-1}(K;\R) \big\}
\]
\end{theorem}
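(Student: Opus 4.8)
The plan is to derive the statement from the finite-dimensional Hodge decomposition of $C^n(K;\R)$. First I would collect the standard linear-algebra facts for adjoint operators between finite-dimensional inner-product spaces: since $d_n$ is the adjoint of $\delta^{n-1}$, one has $\ker(d_n) = \operatorname{im}(\delta^{n-1})^{\perp}$ and hence the orthogonal splitting $C^n(K;\R) = \operatorname{im}(\delta^{n-1}) \oplus \ker(d_n)$; likewise, since $d_{n+1}$ is the adjoint of $\delta^n$, one has $C^n(K;\R) = \operatorname{im}(d_{n+1}) \oplus \ker(\delta^n)$. Using $\delta^n \circ \delta^{n-1} = 0$ together with its adjoint identity $d_n \circ d_{n+1} = 0$, the inclusions $\operatorname{im}(\delta^{n-1}) \subseteq \ker(\delta^n)$ and $\operatorname{im}(d_{n+1}) \subseteq \ker(d_n)$ hold, so intersecting the second splitting with $\ker(d_n)$ yields $\ker(d_n) = \operatorname{im}(d_{n+1}) \oplus \mathcal{H}$, where $\mathcal{H} := \ker(\delta^n) \cap \ker(d_n)$. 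By the Proposition immediately preceding this theorem, $\mathcal{H}$ is precisely the space of harmonic $n$-cochains. Combining these gives the orthogonal decomposition
\[
C^n(K;\R) \;=\; \operatorname{im}(\delta^{n-1}) \;\oplus\; \mathcal{H} \;\oplus\; \operatorname{im}(d_{n+1}),
\]
and in particular $\mathcal{H} \perp \operatorname{im}(\delta^{n-1})$ and $\ker(\delta^n) = \operatorname{im}(\delta^{n-1}) \oplus \mathcal{H}$.

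Next I would prove existence and uniqueness of the harmonic representative. Given $[\beta] \in H^n(K;\R)$ with $\beta \in Z^n(K;\R) = \ker(\delta^n)$, the identity $\ker(\delta^n) = \operatorname{im}(\delta^{n-1}) \oplus \mathcal{H}$ lets us write $\beta = \delta^{n-1}(\nu) + \theta$ with $\theta \in \mathcal{H}$; thus $\theta$ is a harmonic cocycle cohomologous to $\beta$. For uniqueness, if $\theta$ and $\theta'$ are both harmonic and $\theta - \theta' = \delta^{n-1}(\mu)$ for some $\mu$, then $\theta - \theta' \in \mathcal{H} \cap \operatorname{im}(\delta^{n-1}) = \{0\}$, so $\theta = \theta'$.

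Finally I would identify $\theta$ with $\beta - \delta^{n-1}(\nu^*)$. The quantity $\|\beta - \delta^{n-1}(\nu)\|$ is minimized over $\nu \in C^{n-1}(K;\R)$ exactly when $\delta^{n-1}(\nu)$ is the orthogonal projection of $\beta$ onto the finite-dimensional subspace $\operatorname{im}(\delta^{n-1})$; such a minimizer $\nu^*$ exists, and although $\nu^*$ itself need not be unique when $\delta^{n-1}$ is non-injective, the coboundary $\delta^{n-1}(\nu^*)$ is. Since the Hodge decomposition above is orthogonal and $\beta = \delta^{n-1}(\nu) + \theta$ with $\theta \perp \operatorname{im}(\delta^{n-1})$, the orthogonal projection of $\beta$ onto $\operatorname{im}(\delta^{n-1})$ equals $\delta^{n-1}(\nu)$; hence $\delta^{n-1}(\nu^*) = \delta^{n-1}(\nu)$ and $\beta - \delta^{n-1}(\nu^*) = \theta$, the unique harmonic representative.

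I do not expect a genuine obstacle here, as everything reduces to linear algebra in finite-dimensional inner-product spaces; the only points requiring care are the orthogonality bookkeeping when assembling the three-term decomposition, the reliance on the preceding Proposition to identify $\mathcal{H}$ with $\ker(\Delta_n)$, and the mild abuse in the statement whereby $\nu^*$ should be read as ``any minimizer'', since it is $\delta^{n-1}(\nu^*)$ rather than $\nu^*$ that is canonical.
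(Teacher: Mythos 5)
Your proof is correct and follows exactly the route the paper intends (the paper omits a formal proof but remarks that $\theta$ is the orthogonal projection of $\beta$ onto the orthogonal complement of $B^n(K;\R)$ in $Z^n(K;\R)$, which is precisely the decomposition $\ker(\delta^n)=\operatorname{im}(\delta^{n-1})\oplus\mathcal{H}$ you establish). Your observation that only $\delta^{n-1}(\nu^*)$, not $\nu^*$ itself, is canonical is a correct and worthwhile clarification of the statement.
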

In other words, given $\beta \in Z^n(K;\R)$, $\theta$ is obtained by
projecting $\beta$ orthogonally onto the orthogonal complement of $B^n(K;\R)$ in $Z^n(K;\R)$.
\\[.2cm]
\noindent Let us now go back to our original set up:
A covering $\mathcal{U} = \{U_r\}$ for a space $B$,
a  partition of unity $\{\varphi_r\}$ dominated by $\mathcal{U}$
and a class $[\eta] \in H^2(\mathcal{N}(\mathcal{U});\Z)$.
The inclusion $\jmath: \Z \hookrightarrow \R$ induces a homomorphism
\[
\jmath^*: H^2(\mathcal{N}(\mathcal{U});\Z) \longrightarrow H^2(\mathcal{N}(\mathcal{U});\R)
\]
and if $\beta\in \jmath^*([\eta])$, then there exists $\nu \in C^1(\mathcal{N}(\mathcal{U});\R)$
so that $\jmath^\#(\eta) = \beta + \delta^1(\nu)$.
\begin{lemma} Let $\omega = \{\omega_{rs}\}$ and
$\gor{\omega} = \{\gor{\omega}_{rs}\}$ be the sets of functions
\[
\begin{array}{cccl}
\omega_{rs} : & U_r \cap U_s & \longrightarrow &\C^\times \\
 & b & \mapsto & \exp\left\{2\pi i \sum\limits _t \varphi_t(b)\eta_{rst}\right\} \\ \\
\gor{\omega}_{rs} : & U_r \cap U_s & \longrightarrow & \C^\times \\
& b & \mapsto & \exp\left\{2\pi i\left(\nu_{rs} + \sum\limits_t \varphi_t(b)\beta_{rst}\right)\right\}
\end{array}
\]
then $\omega,\gor{\omega} \in \check{C}^1(\mathcal{U};\mathscr{C}_\C^\times)$
are cohomologous \v{C}ech cocycles.
\end{lemma}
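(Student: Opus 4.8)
The plan is to produce an explicit $0$-cochain $g = \{g_r\} \in \check{C}^0(\mathcal{U};\mathscr{C}_\C^\times)$ with $\gor{\omega} = \omega\cdot\delta^0(g)$. Since the preceding proposition already shows that $\omega$ is a \v{C}ech cocycle, this single identity yields \emph{both} conclusions at once: $\gor{\omega}$ is the product of a cocycle with a coboundary, hence a cocycle, and it is visibly cohomologous to $\omega$. First I would unpack the hypothesis $\jmath^\#(\eta) = \beta + \delta^1(\nu)$ componentwise, which reads
\[
\eta_{rst} \;=\; \beta_{rst} + \nu_{st} - \nu_{rt} + \nu_{rs}
\]
on each $2$-simplex $\{r,s,t\}$ of $\mathcal{N}(\mathcal{U})$; equivalently $\beta_{rst} - \eta_{rst} = -(\nu_{st} - \nu_{rt} + \nu_{rs})$. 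Here the $\nu_{rs}$ are real numbers indexed by the edges, and although the left-hand side above is an integer, the identity itself is one of real numbers and is all that the computation below needs.

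Next I would dispose of the routine points: each $\gor{\omega}_{rs}(b) = \exp\{2\pi i(\nu_{rs} + \sum_t \varphi_t(b)\beta_{rst})\}$ is continuous and never zero, so $\gor{\omega} \in \check{C}^1(\mathcal{U};\mathscr{C}_\C^\times)$. Then I would set
\[
g_r : U_r \longrightarrow \C^\times, \qquad g_r(b) = \exp\Big\{-2\pi i\sum_t \varphi_t(b)\,\nu_{rt}\Big\},
\]
so that $(\delta^0 g)_{rs} = g_s/g_r$ on $U_r \cap U_s$. The core computation is
\[
\frac{\gor{\omega}_{rs}(b)}{\omega_{rs}(b)} = \exp\Big\{2\pi i\Big(\nu_{rs} + \sum_t \varphi_t(b)\big(\beta_{rst} - \eta_{rst}\big)\Big)\Big\};
\]
substituting $\beta_{rst} - \eta_{rst} = -(\nu_{st} - \nu_{rt} + \nu_{rs})$ and using $\sum_t \varphi_t(b) = 1$ to rewrite $\sum_t \varphi_t(b)\nu_{rs} = \nu_{rs}$, the two $\nu_{rs}$ contributions cancel and one is left with $\exp\{2\pi i\sum_t \varphi_t(b)(\nu_{rt} - \nu_{st})\} = g_s(b)/g_r(b)$. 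Hence $\gor{\omega}_{rs} = \omega_{rs}\cdot(\delta^0 g)_{rs}$ whenever $U_r\cap U_s \neq \emptyset$, i.e.\ $\gor{\omega} = \omega\cdot\delta^0(g)$, and the lemma follows.

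I do not expect a genuine obstacle; the argument is essentially an exercise in bookkeeping, and the only thing requiring attention is consistency with the index conventions already in force (a summand with $\{r,s,t\}$ not a simplex contributes $0$, exactly as in the previous proposition). If one prefers not to invoke that $\omega$ is a cocycle, the cocycle property of $\gor{\omega}$ can be verified directly: $\gor{\omega}_{rs}\cdot\gor{\omega}_{st}\cdot\gor{\omega}_{rt}^{-1} = \exp\{2\pi i(\nu_{rs} + \nu_{st} - \nu_{rt} + \beta_{rst})\} = \exp\{2\pi i\,\eta_{rst}\} = 1$, using $\delta^2(\beta) = 0$, the partition-of-unity relation $\sum_t\varphi_t = 1$, and — this time essentially — the integrality of $\eta_{rst}$.
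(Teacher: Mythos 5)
Your proposal is correct and follows essentially the same route as the paper: it exhibits the explicit $0$-cochain $g_r=\exp\{-2\pi i\sum_t\varphi_t\nu_{rt}\}$ (the inverse of the paper's $\mu_r$) and verifies $\gor{\omega}_{rs}=\omega_{rs}\cdot g_s/g_r$ by the same expansion of $\delta^1(\nu)_{rst}$ together with $\sum_t\varphi_t=1$, inheriting the cocycle property of $\gor{\omega}$ from that of $\omega$ established in the preceding proposition. The optional direct check of the cocycle condition for $\gor{\omega}$ is a harmless bonus and correctly isolates where integrality of $\eta$ is used.
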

\begin{proof}
Since $\omega$ is a cocycle, it is enough that check that $\omega$ and $\gor{\omega}$ are cohomologous.
To this end let $\mu = \{\mu_r\}\in \check{C}^0(\mathcal{U};\mathscr{C}_\C^\times)$, where
\[
\mu_r(b) = \exp\left\{2\pi i \sum_t \varphi_t(b)\cdot \nu_{rt}\right\} \;\;\;\; b\in U_r
\]
Since for every $U_r \cap U_t \neq \emptyset$
\begin{eqnarray*}
  \sum_t \varphi_t \cdot n_{rst}&=& \sum_t \varphi_t \cdot \left(\beta_{rst} + \delta^1(\nu)_{rst}\right) \\
   &=& \sum_t \varphi_t \cdot (\beta_{rst} + \nu_{rs} - \nu_{rt} + \nu_{st}) \\
   &=& \nu_{rs} + \sum_t \varphi_t \cdot \beta_{rst} + \sum_t \varphi_t \cdot \nu_{st} - \sum_t \varphi_t \cdot \nu_{rt}
\end{eqnarray*}
then
\[\omega_{rs} = \gor{\omega}_{rs} \cdot \frac{\mu_s}{\mu_r} =  \gor{\omega}_{rs} \cdot \delta^0(\mu)_{rs}\]
and the result follows.
\end{proof}
Let $\theta \in Z^2(\mathcal{N}(\mathcal{U});\R)$
be the harmonic cocycle representing
the class
\[\jmath^*([\eta]) \in H^2(\mathcal{N}(\mathcal{U});\R)\]
let $ \nu \in C^1(\mathcal{N}(\mathcal{U});\R)$ be
so that
$
\jmath^\#(\eta) - \theta = \delta^1(\nu)
$ and let $f_{\theta ,\nu} : B \longrightarrow \CP^n$ be given
on $b\in U_j$
\begin{equation}\label{eq:classMapFormulaHarmonic}
f_{\theta,\nu}(b) =
\left[
\sqrt{\varphi_0(b)}\cdot e^{2\pi i \big(\nu_{0j} + \sum\limits_t \varphi_t(b)\theta_{0jt}\big)}
: \cdots :
\sqrt{\varphi_n(b)}\cdot e^{2\pi i \big(\nu_{nj} + \sum\limits_t \varphi_t(b)\theta_{njt}\big)}
\right]
\end{equation}
It follows that $f_\eta $ and $ f_{\theta,\nu}$ are homotopic,
$f_{\theta,\nu}$ is as locally linear as possible, and
for different choices of $\nu$ the resulting principal projective coordinates of
$f_{\theta,\nu}(\XX)$ differ by a linear (diagonal) isometry.

We now revisit the 2-sphere example.
One can check that
\[
\theta = \jmath^\#\left(\eta^0 + \eta^1 - \eta^2 - \eta^3\right)/4
\]
is the harmonic cocycle representing the cohomology class
\[\jmath^* \left(
\left[ \eta^0 \right]\right) \in H^2(\mathcal{N}(\mathcal{U}); \R)\]
Let $\nu \in C^1(\mathcal{N}(\mathcal{U});\R)$
be so that  $\theta = \jmath^\#(\eta^0) - \delta^1(\nu)$
and let $f_{\theta,\nu} : S^2 \longrightarrow \CP^3$
be as in equation (\ref{eq:classMapFormulaHarmonic}).
We show in Figure \ref{fig:example_S2_coordinates_CP1b} the computed $\CP^1$ coordinates of
$f_{\theta,\nu}(\XX)$  for the finite random sample $\XX \subset S^2$.

\begin{figure}[!ht]
  \centering
  \includegraphics[width= 0.7\textwidth]{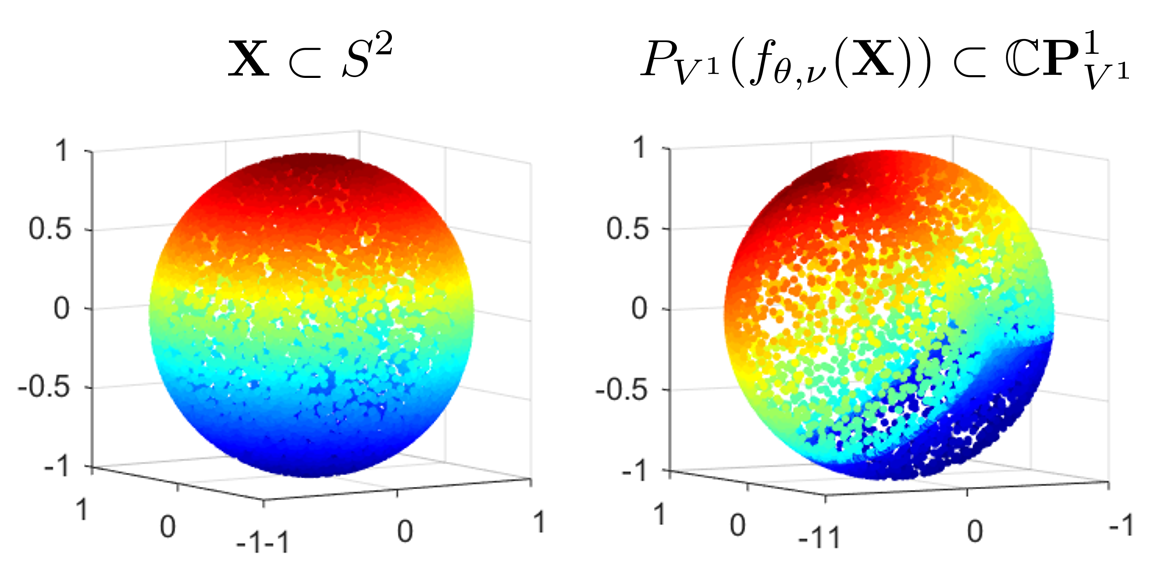}
  \caption{$\CP^1$ coordinates for $\XX \subset S^2$ using the harmonic cocycle $\theta$;
see  Figure \ref{fig:example_S2_coordinates_CP1} for comparison. Please refer to an electronic version for colors.
}\label{fig:example_S2_coordinates_CP1b}
\end{figure}

\section{Multiscale Projective Coordinates
via Persistent Cohomology of Sparse Filtrations}
\label{sec:TransitionFuncFromPersistentCohomology}
The goal of this section is to show how one can use persistent cohomology
to construct multiscale projective coordinates.

\subsection*{Greedy Permutations}
Let $\underline{k} = \{0,\ldots,k\}$ for  $k\in \Z_{\geq0}$,
let $(\mathbb{M},\mathbf{d})$ be a metric space and let $X\subset \mathbb{M}$ be
a finite subset with $n+1$ elements.
A greedy permutation on $X$ is a bijection
$\sigma_g: \underline{n} \longrightarrow X$
which satisfies
\[
\sigma_g(s+1) = \argmax\limits_{x\in X} \mathbf{d} \big(x, \sigma_g\left(\underline{s}\right)\big)
\;\;,\;\;\; s =0,\ldots, n-1
\]
\subsection*{Sparse Filtrations}
Given a greedy permutation $\sigma_g : \underline{n} \longrightarrow X $,
let $x_s = \sigma_g(s)$
and $X_s = \sigma_g\left(\underline{s}\right)$.
The \emph{insertion radius} of $x_s$,
denoted  $\lambda_s$, is defined as
\[
\lambda_s = \left\{
\begin{array}{ccc}
\infty & \mbox{ if } & s = 0 \\[.2cm]
\mathbf{d}\big(x_s , X_{s-1}\big) & \mbox{ if } & s>0
\end{array} \right.
\]
If follows that
$\infty =\lambda_0 > \lambda_1 \geq  \cdots \geq \lambda_n $ .  Fix $0 < \epsilon < 1$ and for $\alpha \geq 0$ define
\begin{equation}\label{eq:defR_s}
r_s(\alpha) =
\left\{
  \begin{array}{ccc}
    \alpha & \hbox{ if } & \alpha < \lambda_s (1+\epsilon)/\epsilon\\[.2cm]
    \lambda_s(1 + \epsilon)/\epsilon & \hbox{ if }& \lambda_s(1+\epsilon)/\epsilon \leq \alpha \leq \lambda_s(1+\epsilon)^2/\epsilon \\[.2cm]
    0 & \hbox{ if }& \alpha >\lambda_s(1+\epsilon)^2/\epsilon
  \end{array}
\right.
\end{equation}
In particular $r_0(\alpha) = \alpha$ for all $\alpha \geq 0$, and
for $s\geq 1$ the graph of $r_s$ is shown below.
\begin{figure}[!ht]
  \centering
  \includegraphics[scale =.4]{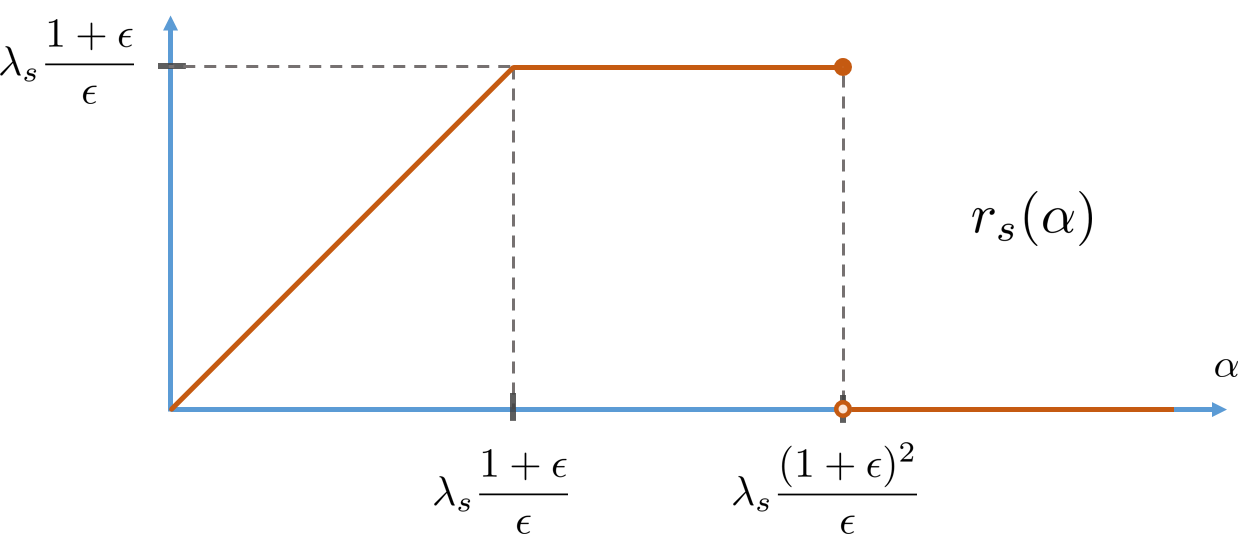}
  \caption{Function $r_s$}
\end{figure}

\begin{definition}
For $\alpha \geq0$ let
\[
B_s^\alpha = \{b\in \mathbb{M} \; : \; \mathbf{d}(b,x_s) < r_s(\alpha)\}
\]
\end{definition}
It follows thats
$S_\alpha = \left\{ s\in \underline{n} \; : \; \lambda_s \geq \epsilon\alpha/(1+\epsilon)^2\right\}$
is the collection of indices $s$ for which $B_s^\alpha \neq \emptyset$.
Moreover,
\[\mathcal{B}^\alpha = \{B_s^\alpha\; : \; s \in S_\alpha\}\]
satisfies $X \subset \bigcup \mathcal{B}^\alpha$
for each $\alpha >  0$, and
it is a sparse covering in the sense that as $\alpha$ increases there are  fewer  balls in
$\mathcal{B}^\alpha$,
but of larger radii.
Moreover, it is a $(1+\epsilon)$-approximation of the $\alpha$-offset
$X^\alpha = \{b\in \mathbb{M} \; : \; \mathbf{d}(b,X) < \alpha\}$:
\begin{proposition}[\cite{cavanna2015geometric}, Corollary 2] If $\beta \geq  (1+\epsilon)\alpha$, then
\[
\bigcup \mathcal{B}^\alpha \subset X^\alpha \subset \bigcup \mathcal{B}^\beta
\]
\end{proposition}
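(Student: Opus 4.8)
The plan is to prove the two inclusions separately; the first is elementary and needs no hypothesis on $\beta$, while the second is where the combinatorics of the greedy permutation enters.

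For $\bigcup\mathcal{B}^\alpha \subset X^\alpha$: the definition (\ref{eq:defR_s}) gives $r_s(\alpha) \leq \alpha$ in all three regimes (it equals $\alpha$, or it equals $\lambda_s(1+\epsilon)/\epsilon \leq \alpha$, or it equals $0$). Hence for $b\in B^\alpha_s$ with $s\in S_\alpha$ we get $\mathbf{d}(b,X)\leq \mathbf{d}(b,x_s) < r_s(\alpha)\leq \alpha$, so $b\in X^\alpha$.

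For $X^\alpha\subset \bigcup\mathcal{B}^\beta$ with $\beta\geq(1+\epsilon)\alpha$: fix $b\in X^\alpha$ and pick $x_{j_0}\in X$ with $\mathbf{d}(b,x_{j_0})<\alpha$. The idea is to trade $x_{j_0}$ for a nearby landmark whose insertion radius is large enough that $r_s(\beta)=\beta$. Let $m$ be the largest index with $\lambda_m > \epsilon\beta/(1+\epsilon)$; this exists since $\lambda_0=\infty$, and because $\lambda_0>\lambda_1\geq\cdots\geq\lambda_n$ the set of such indices is exactly $\{0,\ldots,m\}$. The step I would emphasize is that, by definition of the greedy permutation, $\lambda_{m+1}=\max_{x\in X}\mathbf{d}(x,X_m)$ is the covering radius of $X_m$ in $X$ (with the convention that this covering radius is $0$ when $m=n$); by maximality of $m$ this covering radius is $\leq \epsilon\beta/(1+\epsilon)$, so there is an index $s\leq m$ with $\mathbf{d}(x_{j_0},x_s)\leq \epsilon\beta/(1+\epsilon)$. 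The triangle inequality then yields
\[
\mathbf{d}(b,x_s)\;\leq\;\mathbf{d}(b,x_{j_0})+\mathbf{d}(x_{j_0},x_s)\;<\;\alpha+\frac{\epsilon\beta}{1+\epsilon}\;\leq\;\frac{\beta}{1+\epsilon}+\frac{\epsilon\beta}{1+\epsilon}\;=\;\beta,
\]
where the third inequality uses $\alpha\leq\beta/(1+\epsilon)$ and the second is strict because $\mathbf{d}(b,x_{j_0})<\alpha$. Since $s\leq m$ we have $\lambda_s>\epsilon\beta/(1+\epsilon)$, i.e.\ $\beta<\lambda_s(1+\epsilon)/\epsilon$, so $r_s(\beta)=\beta$ by the first case of (\ref{eq:defR_s}); together with the display this gives $b\in B^\beta_s$. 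Finally $\lambda_s>\epsilon\beta/(1+\epsilon)\geq \epsilon\beta/(1+\epsilon)^2$ shows $s\in S_\beta$, hence $B^\beta_s\in\mathcal{B}^\beta$ and $b\in\bigcup\mathcal{B}^\beta$.

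I do not anticipate a genuine obstacle here: the one point needing care is the identification of $\lambda_{m+1}$ with the covering radius of the greedy net $X_m$ (which is precisely what guarantees $x_{j_0}$ has a close landmark among $x_0,\ldots,x_m$), followed by keeping track of which of the three regimes of $r_s$ is in force. The factor $(1+\epsilon)$ in the hypothesis is exactly the slack needed for the chain of inequalities in the display to close.
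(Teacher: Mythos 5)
Your argument is correct. Note that the paper itself gives no proof of this proposition --- it is imported verbatim as Corollary 2 of the cited reference [cavanna2015geometric] --- so there is nothing internal to compare against; you have in effect supplied the omitted proof. Both halves check out: $r_s(\alpha)\leq\alpha$ in every regime of (\ref{eq:defR_s}) gives the first inclusion, and for the second your identification of $\lambda_{m+1}$ with the covering radius of the greedy prefix $X_m$ is exactly the right lever, with the threshold $\epsilon\beta/(1+\epsilon)$ chosen so that simultaneously (i) every point of $X$ has a landmark $x_s$, $s\leq m$, within $\epsilon\beta/(1+\epsilon)$, (ii) $r_s(\beta)=\beta$ for those $s$ because $\beta<\lambda_s(1+\epsilon)/\epsilon$, and (iii) $\alpha+\epsilon\beta/(1+\epsilon)\leq\beta$ under the hypothesis $\beta\geq(1+\epsilon)\alpha$. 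The membership $s\in S_\beta$ is also verified, so $B^\beta_s$ genuinely belongs to $\mathcal{B}^\beta$; no gaps.
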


Let
\[
U^\alpha_s
=
\bigcup_{0 \leq \lambda \leq \alpha} \big( B_s^\lambda \times \{\lambda\} \big)
\;\;\;
\mbox{ and }
\;\;\;
\mathcal{U}^\alpha = \left\{ U_s^\alpha \; : \; s \in \underline{n}\right\}
\]
It follows that $\mathcal{N}(\mathcal{U}^\alpha)\subset \mathcal{N}(\mathcal{U}^\beta)$
whenever $\alpha \leq \beta$.

\begin{definition}
The sparse \v{C}ech filtration with sparsity parameter $0<\epsilon<1$, induced
by the greedy permutation $\sigma_g : \underline{n} \longrightarrow X$, is the filtered simplicial complex
\[
\check{\mathcal{C}}(\sigma_g, \epsilon)
 =
\left\{
\check{C}_\alpha(\sigma_g, \epsilon)\;: \;\alpha \geq  0
\right\}
\]
where
\[
\check{\mathcal{C}}_\alpha(\sigma_g, \epsilon) =
\mathcal{N}\left(\mathcal{U}^\alpha\right)
\]
\end{definition}

\subsection*{Multiscale Projective Coordinates}
We will show now how the persistent cohomology of  $\check{\mathcal{C}}(\sigma_g , \epsilon)$
can be used to compute multiscale compatible
classifying maps.
The first thing to notice is that projection onto the first coordinate
\[
\begin{array}{ccc}
\bigcup \mathcal{U}^\alpha &\longrightarrow &\bigcup\mathcal{B}^{\alpha} \\[.1cm]
(b,\lambda) & \mapsto & b
\end{array}
\]
is a deformation retraction if one regards $B_s^\alpha$ as a subset
of $U_s^{\alpha}$ via the inclusion
\begin{equation}\label{eq:Inlcusion}
\begin{array}{ccc}
B_s^\alpha & \hookrightarrow & U^\alpha_s \\
b &\mapsto & (b,\alpha)
\end{array}
\end{equation}

\begin{theorem}\label{thm:ClassifyingFormulaSparse}
Let $(\mathbb{M}, \mathbf{d})$ be a metric space and  let $X\subset \mathbb{M}$ be
a subset with $n+1$ points.
Given a greedy permutation  $\sigma_g : \underline{n} \longrightarrow X$ and
a sparsity parameter $0 < \epsilon < 1$,
let $r_s(\alpha)$ for $\alpha \geq 0$ be as in equation (\ref{eq:defR_s}).

If $\check{\mathcal{C}}(\sigma_g, \epsilon)$ is the resulting sparse \v{C}ech filtration and
\[
\varphi^\alpha_s(b) =
\frac
{\ppart{r_s(\alpha) - \mathbf{d}(b,x_s)}^2}
{\sum\limits_{t\in \underline{n}} \ppart{r_t(\alpha) - \mathbf{d}(b,x_t)}^2}
\;\;\;\;\;\; \mbox{ for } \;\;\;\;\;\;\;  s\in \underline{n}\;\; , \;\; b\in \bigcup\mathcal{B}^\alpha
\]
then we have well-defined maps
\[
\begin{array}{rccl}
\ww^\alpha_1 : &
H^1\left(\check{\mathcal{C}}_\alpha(\sigma_g, \epsilon);\Z/2\right) &
\longrightarrow &
 \left[\bigcup \mathcal{B}^\alpha\, , \, \RP^\infty \right] \\[.2cm]
&[\tau= \{\tau_{rt}\}] &  \mapsto & [f^\alpha_\tau] \\ \\
f^\alpha_\tau : &\hspace{-1.5cm} \bigcup \mathcal{B}^\alpha & \hspace{-2.5cm}\longrightarrow &\hspace{-1cm} \RP^n \\
&\hspace{-1.5cm} B^\alpha_j \ni b & \hspace{-2.5cm} \mapsto &\hspace{-1cm} \left[(-1)^{\tau_{0j}}\sqrt{\varphi^\alpha_0(b)}: \cdots :  (-1)^{\tau_{nj}}\sqrt{\varphi^\alpha_n(b)} \right]
\end{array}
\]
and
\[
\begin{array}{rccl}
\mathbf{c}^\alpha_1 : &
H^2\left(\check{\mathcal{C}}_\alpha(\sigma_g,\epsilon);\Z\right) &
\longrightarrow & \left[\bigcup \mathcal{B}^\alpha\, , \, \CP^\infty \right]\\[.2cm]
&\left[\eta= \theta + \delta^1(\nu)\right] &\mapsto & [f^\alpha_{\theta,\nu}] \\ \\
f^\alpha_{\theta,\nu} : & \hspace{-1.9cm}\bigcup \mathcal{B}^\alpha & \hspace{-3.8cm}\longrightarrow & \hspace{-1.8cm}\CP^n \\
& \hspace{-2.4cm}B^\alpha_j \ni b & \hspace{-4.1 cm} \mapsto & \hspace{-2.25cm} \left[e^{2\pi i
( \nu_{0j} + \sum\limits_t \varphi^\alpha_t(b) \theta_{0jt} )}
\sqrt{\varphi^\alpha_0(b)}
 : \cdots :
e^{2\pi i
( \nu_{nj} + \sum\limits_t \varphi^\alpha_t(b)\theta_{njt} )
}
\sqrt{\varphi^\alpha_n(b)} \right]
\end{array}
\]
where $\theta = \{\theta_{rst}\}
\in Z^2\left(\check{C}_\alpha(\sigma_g,\epsilon); \R\right)$ is the
harmonic cocycle representing
$\jmath^*([\eta]) \in H^2\left(\check{C}_\alpha(\sigma_g,\epsilon);\R\right)$
and
$\nu = \{\nu_{rt}\} \in
C^1\left(\check{C}_\alpha(\sigma_g,\epsilon);\R\right)
$
is so that $\theta = \jmath^\#(\eta) - \delta^1(\nu)$.
Moreover,
if each $B^\alpha_s  \in \mathcal{B}^\alpha$ is  connected,
then
$
\ww_1^\alpha$
is injective;
if in addition each $B^\alpha_s$ is locally path-connected  and simply connected, and each $B^\alpha_r \cap B^\alpha_t$ is either empty or connected, then
$
\mathbf{c}_1^\alpha
$
is injective.
\end{theorem}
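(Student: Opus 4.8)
The plan is to recognize the assertion as Theorem~\ref{thm:ClassifyingMapFormula2}, together with the harmonic-cocycle refinement of Section~\ref{sec:ChoosingCocycles}, applied not to $\mathcal{B}^\alpha$ directly but to the open cover $\mathcal{U}^\alpha=\{U^\alpha_0,\ldots,U^\alpha_n\}$ of $\bigcup\mathcal{U}^\alpha$, and then transported along the deformation retraction $\pi\colon\bigcup\mathcal{U}^\alpha\to\bigcup\mathcal{B}^\alpha$, $(b,\lambda)\mapsto b$, recorded just above the statement. First I would note that $\bigcup\mathcal{U}^\alpha$ is an open subset of the metric space $\mathbb{M}\times\R$ --- hence paracompact and Hausdorff --- and that $\check{\mathcal{C}}_\alpha(\sigma_g,\epsilon)=\mathcal{N}(\mathcal{U}^\alpha)$ by definition; so Theorem~\ref{thm:ClassifyingMapFormula2} supplies maps $\ww_1^{\mathcal{U}^\alpha}\colon H^1(\mathcal{N}(\mathcal{U}^\alpha);\Z/2)\to[\bigcup\mathcal{U}^\alpha,\RP^\infty]$ and $\mathbf{c}_1^{\mathcal{U}^\alpha}\colon H^2(\mathcal{N}(\mathcal{U}^\alpha);\Z)\to[\bigcup\mathcal{U}^\alpha,\CP^\infty]$, the latter represented --- by the discussion in Section~\ref{sec:ChoosingCocycles} --- by the harmonic map \eqref{eq:classMapFormulaHarmonic} for the cover $\mathcal{U}^\alpha$, built from the harmonic representative $\theta$ of $\jmath^*[\eta]$ and any $\nu$ with $\jmath^\#(\eta)=\theta+\delta^1(\nu)$, a choice that alters the map only by a diagonal isometry of $\CP^n$. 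Since $\pi$ is a deformation retraction with section $\iota\colon b\mapsto(b,\alpha)$, precomposition with $\iota$ is a bijection $[\bigcup\mathcal{U}^\alpha,Z]\to[\bigcup\mathcal{B}^\alpha,Z]$ for every space $Z$; I would then set $\ww_1^\alpha([\tau])=[\,\ww_1^{\mathcal{U}^\alpha}([\tau])\circ\iota\,]$ and $\mathbf{c}_1^\alpha([\eta])=[\,\mathbf{c}_1^{\mathcal{U}^\alpha}([\eta])\circ\iota\,]$.

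The second step is to check that these classes are the ones named in the statement. Because $(b,\alpha)\in U^\alpha_j$ precisely when $b\in B^\alpha_j$, one has $\iota^{-1}(U^\alpha_s)=B^\alpha_s$, and therefore the pullback along $\iota$ of the $\R$-line bundle $\zeta_\tau$ induced by $(\mathcal{U}^\alpha,\phi^\tau)$ is the bundle over $\bigcup\mathcal{B}^\alpha$ induced by $(\mathcal{B}^\alpha,\phi^\tau)$ --- restricting the constant cocycle to the overlaps $B^\alpha_r\cap B^\alpha_t$, the dead indices (for which $B^\alpha_s=\emptyset$) simply dropping out --- and similarly in the complex case. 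On the other hand $f^\alpha_\tau$ is exactly the classifying map furnished by Corollary~\ref{coro:ClassifyingMapFormula1c} for the cover $\mathcal{B}^\alpha$ with $\epsilon_s:=r_s(\alpha)$, quadratic bumps and the cocycle $\phi^\tau$, post-composed with the coordinate inclusion $\RP^{|S_\alpha|-1}\hookrightarrow\RP^n$ that fills in the identically-zero coordinates of the indices $s\notin S_\alpha$; likewise $f^\alpha_{\theta,\nu}$ is the harmonic map \eqref{eq:classMapFormulaHarmonic} for that same cover. Hence $f^\alpha_\tau$ classifies $\iota^*\zeta_\tau$, and since $\ww_1^{\mathcal{U}^\alpha}([\tau])\circ\iota$ does too, the uniqueness-up-to-homotopy clause of Theorem~\ref{thm:BundleClassification} yields $\ww_1^\alpha([\tau])=[f^\alpha_\tau]$; the complex case is identical, and the asserted well-definedness of the displayed formulas (independence of the auxiliary index $j$, and --- in the complex case --- of $\nu$ and of the integer representative $\eta$) is exactly what Corollary~\ref{coro:ClassifyingMapFormula1c} and the harmonic-smoothing discussion of Section~\ref{sec:ChoosingCocycles} already provide.

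Finally, injectivity. By the bijection above it suffices that $\ww_1^{\mathcal{U}^\alpha}$ and $\mathbf{c}_1^{\mathcal{U}^\alpha}$ be injective, which by Theorem~\ref{thm:ClassifyingMapFormula2} holds once each $U^\alpha_s$ is connected (respectively also locally path-connected and simply connected, with each $U^\alpha_r\cap U^\alpha_t$ empty or connected). For a surviving index $s\in S_\alpha$ the function $r_s$ is non-decreasing on $[0,\alpha]$, so $B^\lambda_s\subseteq B^\alpha_s$ for $0\le\lambda\le\alpha$ and the ``scale slide'' $((b,\lambda),t)\mapsto(b,(1-t)\lambda+t\alpha)$ deformation-retracts $U^\alpha_s$ onto $B^\alpha_s\times\{\alpha\}\cong B^\alpha_s$, and retracts $U^\alpha_r\cap U^\alpha_t$ onto $B^\alpha_r\cap B^\alpha_t$ when $r,t\in S_\alpha$; thus the hypotheses imposed on $\mathcal{B}^\alpha$ transfer verbatim to the $S_\alpha$-indexed sets. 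The step I expect to be the main obstacle is the remaining one --- the \emph{dead} indices $s\notin S_\alpha$, whose $U^\alpha_s$ is still nonempty, need not retract onto a member of $\mathcal{B}^\alpha$, and carries no hypothesis: here one must invoke the structural property of sparse filtrations (\cite{cavanna2015geometric}) that already underpins the retraction $\pi$, namely that each such vertex is dominated in $\mathcal{N}(\mathcal{U}^\alpha)$ (its open star lies inside the open star of a surviving vertex), so that the dead vertices affect neither the homotopy type of $\bigcup\mathcal{U}^\alpha$ nor the sign-function and lifting arguments behind the injectivity of $\Phi_{\mathcal{U}}$ in Proposition~\ref{prop:Simplicial2CechReal} and of its complex analogue. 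Carrying this cascading domination argument through carefully --- and checking that it indeed reduces all the connectivity constraints to those stated on $\mathcal{B}^\alpha$ --- is the bulk of the remaining work.
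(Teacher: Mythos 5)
Your proposal follows essentially the same route as the paper's proof: apply Theorem \ref{thm:ClassifyingMapFormula2} to the cover $\mathcal{U}^\alpha$ of $\bigcup\mathcal{U}^\alpha$, use the deformation retraction $(b,\lambda)\mapsto b$ (equivalently its section $b\mapsto(b,\alpha)$ from equation (\ref{eq:Inlcusion})) to identify $\left[\bigcup\mathcal{U}^\alpha,\FP^\infty\right]$ with $\left[\bigcup\mathcal{B}^\alpha,\FP^\infty\right]$, and transfer the connectivity hypotheses from $\mathcal{B}^\alpha$ to $\mathcal{U}^\alpha$. Two points of comparison. First, the paper short-circuits your second step by one explicit choice: the functions $\varphi_s(b,\lambda):=\varphi_s^\lambda(b)$ form a partition of unity on $\bigcup\mathcal{U}^\alpha$ dominated by $\mathcal{U}^\alpha$, and with that choice the maps furnished by Theorem \ref{thm:ClassifyingMapFormula2} (and its harmonic refinement (\ref{eq:classMapFormulaHarmonic})) restrict along $b\mapsto(b,\alpha)$ to \emph{exactly} the displayed formulas $f^\alpha_\tau$ and $f^\alpha_{\theta,\nu}$, so your detour through Corollary \ref{coro:ClassifyingMapFormula1c} on $\mathcal{B}^\alpha$ and the uniqueness clause of Theorem \ref{thm:BundleClassification} is correct but avoidable. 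Second, the issue you flag as the main obstacle --- the dead indices $s\notin S_\alpha$, for which $U_s^\alpha\neq\emptyset$ while $B_s^\alpha=\emptyset$ and no hypothesis is stated --- is not actually resolved in the paper either: its proof simply asserts that the connectedness conditions hold for $\mathcal{U}^\alpha$ whenever they hold for $\mathcal{B}^\alpha$, which your scale-slide retraction justifies only for surviving indices; for a dead index the slide only retracts $U_s^\alpha=U_s^{\lambda_s(1+\epsilon)^2/\epsilon}$ onto the frozen ball of radius $\lambda_s(1+\epsilon)/\epsilon$ about $x_s$, about which the stated hypotheses say nothing. So you have not missed an argument that the paper supplies; rather you have isolated the one clause of the published proof that is asserted rather than argued, and your proposed remedy (a domination argument in the spirit of \cite{cavanna2015geometric}, or more simply imposing the connectivity hypotheses at all scales $\leq\alpha$, or noting that in the intended settings metric balls are connected) is a reasonable way to discharge it, though as you say it is not carried out in your sketch --- nor in the paper.
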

\begin{proof}
The first thing to notice is that the collection of continuous maps
\[
\begin{array}{rccl}
 \varphi_s& : \bigcup \mathcal{U}^\alpha& \longrightarrow &  \R \\
  &(b,\lambda) & \mapsto & \varphi_s^\lambda(b)
\end{array} \;\;\;\;\; , \;\;\;\; s\in \underline{n}
\]
is a partition of unity dominated by $\mathcal{U}^\alpha$.
The Theorem follows from combining Theorem \ref{thm:ClassifyingMapFormula2} and the following two facts:
the inclusion  $\bigcup \mathcal{B}^\alpha \hookrightarrow \bigcup
\mathcal{U}^\alpha$ from Equation (\ref{eq:Inlcusion})
induces a bijection
\[\left[\bigcup
\mathcal{U}^\alpha, \FP^\infty\right] \longrightarrow
\left[\bigcup
\mathcal{B}^\alpha, \FP^\infty\right]\]
and  the necessary connectedness conditions are
satisfied by $\mathcal{U}^\alpha$ if they are satisfied
by $\mathcal{B}^\alpha$.
\end{proof}

\begin{remark} As $\alpha$ increases, the number of
potentially nontrivial dimensions in the images of
$f_\tau^\alpha$ and $f_{\theta,\nu}^\alpha$ decrease.
Indeed,  since $\varphi_s^\alpha$ is identically zero if and only if $B_s^\alpha = \emptyset$,
it follows that for any $b\in \bigcup\mathcal{B}^\alpha$
 the only potentially non-zero entries in either  $f^\alpha_\tau(b)$ or $f^\alpha_{\theta,\nu}(b)$
correspond to the indices in
\[
S_\alpha = \{s \in \underline{n} \; : \; \lambda_s \geq \epsilon \alpha/ (1+\epsilon)^2\}\]
The observation follows from the fact that
the sequence $\{\lambda_s\}_{s\in \underline{n}}$ is non-increasing,
and monotonically decreasing for generic $X$.
\end{remark}

\begin{proposition}\label{prop:Naturality}
Let $ \alpha \leq \beta $, then the diagrams
\[
\begin{tikzcd}[column sep = 1.5em]
H^1\left(\check{\mathcal{C}}_\alpha(\sigma_g,\epsilon);\Z/2\right) \arrow{r}{\ww_1^\alpha} &
\left[\bigcup\mathcal{B}^\alpha, \RP^\infty\right] \\
H^1\left(\check{\mathcal{C}}_\beta(\sigma_g,\epsilon);\Z/2\right) \arrow[swap]{r}{\ww_1^\beta}\arrow{u} &
\left[\bigcup\mathcal{B}^\beta, \RP^\infty\right]\arrow{u}
\end{tikzcd}
\begin{tikzcd}[column sep = 1.4em]
H^2\left(\check{\mathcal{C}}_\alpha(\sigma_g,\epsilon);\Z\right) \arrow{r}{\mathbf{c}_1^\alpha} &
\left[\bigcup\mathcal{B}^\alpha, \CP^\infty\right] \\
H^2\left(\check{\mathcal{C}}_\beta(\sigma_g,\epsilon);\Z\right) \arrow[swap]{r}{\mathbf{c}_1^\beta}\arrow{u} &
\left[\bigcup\mathcal{B}^\beta , \CP^\infty\right]\arrow{u}
\end{tikzcd}
\]
are commutative.
\end{proposition}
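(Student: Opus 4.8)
The plan is to reduce commutativity of both squares to the literal naturality of the classifying-map formula under the open inclusion $j_{\alpha\beta}\colon\bigcup\mathcal{U}^\alpha\hookrightarrow\bigcup\mathcal{U}^\beta$, and then transport the conclusion to the $\bigcup\mathcal{B}^\gamma$'s through the deformation retractions $\bigcup\mathcal{U}^\gamma\to\bigcup\mathcal{B}^\gamma$ of (\ref{eq:Inlcusion}). Recall from the proof of Theorem~\ref{thm:ClassifyingFormulaSparse} that $f^\gamma_\tau=\gor{f}^\gamma_\tau\circ\iota_\gamma$, where $\gor{f}^\gamma_\tau\colon\bigcup\mathcal{U}^\gamma\to\RP^n$ is the map Theorem~\ref{thm:ClassifyingMapFormula2} attaches to the cover $\mathcal{U}^\gamma$, the dominated partition of unity $\varphi_s\colon(b,\lambda)\mapsto\varphi^\lambda_s(b)$ and the cocycle $\tau$, and $\iota_\gamma(b)=(b,\gamma)$ is a homotopy inverse of the retraction. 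Now since $U^\alpha_s=U^\beta_s\cap\bigcup\mathcal{U}^\alpha$, every simplex of $\mathcal{N}(\mathcal{U}^\alpha)$ is a simplex of $\mathcal{N}(\mathcal{U}^\beta)$; the left vertical arrows are then the persistence structure maps of (\ref{eq:persistenceModule}), i.e.\ restriction of simplicial cochains along $\mathcal{N}(\mathcal{U}^\alpha)\hookrightarrow\mathcal{N}(\mathcal{U}^\beta)$, and the right vertical arrows are the maps on $[\,\cdot\,,\FP^\infty]$ induced by $j_{\alpha\beta}$ through the retraction identifications. With these descriptions, commutativity is exactly the assertion that $\gor{f}^\alpha_{\tau|_\alpha}=\gor{f}^\beta_\tau\circ j_{\alpha\beta}$ for every $\tau$, together with its complex analogue.

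For the real square I would record that (i) the dominated partition of unity $\varphi_s$ is given by \emph{the same} formula on $\bigcup\mathcal{U}^\alpha$ and on $\bigcup\mathcal{U}^\beta$, so $\varphi_s\circ j_{\alpha\beta}=\varphi_s$, and $\varphi_s(b,\lambda)\neq 0$ exactly when $(b,\lambda)\in U^\gamma_s$ --- which is what makes the formula of Theorem~\ref{thm:ClassifyingMapFormula2} well defined both upstairs and downstairs (at a point of $U^\gamma_j$ only transition functions indexed by simplices of $\mathcal{N}(\mathcal{U}^\gamma)$ occur); and (ii) the \v{C}ech $1$-cocycle $\phi^\tau=\{\phi^\tau_{rt}\}$ of constant functions $(-1)^{\tau_{rt}}$ attached to $\tau$ pulls back along $j_{\alpha\beta}$ to the one attached to $\tau|_\alpha$. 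Since the classifying-map formula (\ref{eq:classMapFormula_final}) is manifestly natural under precomposition --- the map attached to pulled-back transition functions and partition of unity equals the original map precomposed with the pullback --- substituting (i) and (ii) gives $\gor{f}^\alpha_{\tau|_\alpha}=\gor{f}^\beta_\tau\circ j_{\alpha\beta}$ on the nose, and applying $(\,\cdot\,)\circ\iota_\alpha$ and the retraction identifications gives commutativity of the left square.

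The complex square is the same up to harmonic smoothing. The identical substitution argument shows that for \emph{any} cocycle $\theta\in Z^2(\check{\mathcal{C}}_\beta(\sigma_g,\epsilon);\R)$ and any $\nu\in C^1(\check{\mathcal{C}}_\beta(\sigma_g,\epsilon);\R)$ one has $\gor{f}^\alpha_{\theta|_\alpha,\nu|_\alpha}=\gor{f}^\beta_{\theta,\nu}\circ j_{\alpha\beta}$ (the partition of unity enters only through the sums $\sum_t\varphi^\lambda_t(b)\theta_{sjt}$, which restrict correctly). But $\mathbf{c}^\alpha_1$ is defined via the \emph{harmonic} representative of $\jmath^*([\eta])$, and the restriction of a harmonic cochain to a subcomplex need not be harmonic, because the Hodge Laplacians of $\check{\mathcal{C}}_\alpha(\sigma_g,\epsilon)$ and $\check{\mathcal{C}}_\beta(\sigma_g,\epsilon)$ differ. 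So if $\theta_\beta$ is the $\check{\mathcal{C}}_\beta$-harmonic representative of $\jmath^*([\eta])$ with $\jmath^\#(\eta)-\theta_\beta=\delta^1(\nu_\beta)$, then $\theta_\beta|_\alpha$ is merely \emph{some} real cocycle representing $\jmath^*([\eta|_\alpha])$, and $\jmath^\#(\eta|_\alpha)=\theta_\beta|_\alpha+\delta^1(\nu_\beta|_\alpha)$. I would then use the Lemma of Section~\ref{sec:ChoosingCocycles} together with the earlier fact that cohomologous \v{C}ech cocycles give isomorphic bundles (hence, by Theorem~\ref{thm:BundleClassification}, homotopic classifying maps): the \v{C}ech cocycles produced from $(\theta_\alpha,\nu_\alpha)$, from $(\theta_\beta|_\alpha,\nu_\beta|_\alpha)$ and from $\eta|_\alpha$ itself are all cohomologous over $\bigcup\mathcal{U}^\alpha$, whence
\[
\gor{f}^\alpha_{\theta_\alpha,\nu_\alpha}\ \simeq\ \gor{f}^\alpha_{\theta_\beta|_\alpha,\nu_\beta|_\alpha}\ =\ \gor{f}^\beta_{\theta_\beta,\nu_\beta}\circ j_{\alpha\beta},
\]
and composing with $\iota_\alpha$ and the retraction identifications gives commutativity of the right square.

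The leftover verifications are routine: $U^\alpha_s=U^\beta_s\cap\bigcup\mathcal{U}^\alpha$ and the induced behaviour of cover, partition of unity and cocycle under restriction (all immediate from the definition (\ref{eq:defR_s}) of $r_s$ and of $U^\alpha_s$, $\varphi^\alpha_s$), plus identifying $j_{\alpha\beta}$ with the right-hand vertical arrows through (\ref{eq:Inlcusion}). The single non-formal point --- and the main obstacle --- is the harmonic detour: one must not restrict the chosen harmonic cocycle directly (it is not the $\check{\mathcal{C}}_\alpha$-harmonic representative in general), but instead restrict the pair consisting of the cohomology class and its chosen primitive, and then appeal to invariance of the homotopy type of the classifying map under cohomologous \v{C}ech cocycles.
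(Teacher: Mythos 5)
Your proof is correct. You should know that the paper offers no argument at all for Proposition \ref{prop:Naturality}: it is stated bare, as an immediate consequence of the naturality with respect to refinements recorded in Theorem \ref{thm:ClassifyingMapFormula2} together with the identification $\left[\bigcup\mathcal{U}^\gamma,\FP^\infty\right]\cong\left[\bigcup\mathcal{B}^\gamma,\FP^\infty\right]$ used in the proof of Theorem \ref{thm:ClassifyingFormulaSparse}. Your writeup supplies exactly the verification being elided, and along the route the paper intends: the equality $U_s^\alpha = U_s^\beta\cap\bigcup\mathcal{U}^\alpha$, the fact that the dominated partition of unity $(b,\lambda)\mapsto\varphi_s^\lambda(b)$ at parameter $\alpha$ is literally the restriction of the one at parameter $\beta$, and the observation that $\varphi_s(b,\lambda)\neq 0$ forces $\{s,j\}$ to be a simplex of $\mathcal{N}(\mathcal{U}^\alpha)$ together give $\gor{f}^\alpha_{\tau|_\alpha}=\gor{f}^\beta_\tau\circ j_{\alpha\beta}$ on the nose, which settles the real square. (A small bonus of routing through the cones: since the retractions are first-coordinate projections, $\pi_\beta\circ j_{\alpha\beta}\circ\iota_\alpha$ is literally the inclusion $\bigcup\mathcal{B}^\alpha\hookrightarrow\bigcup\mathcal{B}^\beta$, so your description of the right-hand vertical arrows agrees with the obvious one and sidesteps any worry about monotonicity of the unions $\bigcup\mathcal{B}^\gamma$.) The one genuinely non-formal point is the one you isolate: $\mathbf{c}_1^\alpha$ is computed from the harmonic representative at parameter $\alpha$, the restriction of the $\beta$-harmonic representative is generally not $\alpha$-harmonic, so the complex square cannot commute at the level of formulas; it commutes in $\left[\,\cdot\,,\CP^\infty\right]$ because, by the lemma of Section \ref{sec:ChoosingCocycles} applied twice over $\mathcal{U}^\alpha$, the \v{C}ech cocycles built from $(\theta_\alpha,\nu_\alpha)$ and from $(\theta_\beta|_\alpha,\nu_\beta|_\alpha)$ are both cohomologous to the one built from $\eta|_\alpha$, and cohomologous \v{C}ech cocycles classify isomorphic bundles, hence homotopic maps by Theorem \ref{thm:BundleClassification}. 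That is precisely why the conclusion holds only in homotopy classes rather than as an equality of maps, and your treatment of it is correct.
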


As a consequence, if for $\alpha = \alpha_0 < \cdots < \alpha_{\ell-1} < \alpha_\ell = \beta$
one has classes
\[
\begin{tikzcd}[column sep = scriptsize, row sep = .1em]
H^2\left(\check{\mathcal{C}}_{\alpha_\ell}(\sigma_g,\epsilon);\Z\right)
\arrow{r}&
 H^2\left(\check{\mathcal{C}}_{\alpha_{\ell-1}}(\sigma_g,\epsilon);\Z\right)
\arrow{r}&
\cdots
\arrow{r}& \hspace{-.05cm}
H^2\left(\check{\mathcal{C}}_{\alpha_0}(\sigma_g,\epsilon);\Z\right) \\
{ }[\eta_\ell]
\arrow[mapsto, shorten <= 3em, shorten >= 3em]{r}&
{ }[\eta_{\ell-1}]
\arrow[mapsto, shorten <= 3em]{r}&
\cdots
\arrow[mapsto, shorten >= 3em]{r}&
{ }[\eta_0]
\end{tikzcd}
\]
then the diagram
\[
\begin{tikzcd}
\bigcup \mathcal{B}^{\alpha_0}
\arrow{rrd}[below]{f^{\alpha_0}_{\eta_0}}
\rar[hook]&
\bigcup \mathcal{B}^{\alpha_1}
\arrow{rd}[near end]{f^{\alpha_1}_{\eta_1}}
\rar[hook]&
\cdots
\rar[hook]&
\bigcup \mathcal{B}^{\alpha_\ell}
\arrow{ld}{f^{\alpha_\ell}_{\eta_\ell}} \\
{}&{}& \CP^n
\end{tikzcd}
\]
commutes up  to a homotopy which perhaps takes place in a higher
dimensional projective space.
The same is true in dimension one with $\Z/2$ coefficients.
The persistent cohomology
of the sparse \v{C}ech filtration
$\check{\mathcal{C}}(\sigma_g, \epsilon)$ now
becomes relevant:
over $\Z/2$,
a 1-dimensional cohomology class with nonzero persistence yields
a multiscale system of compatible (up to homotopy)
$\RP^n$ coordinates.
Constructing multiscale $\CP^n$ coordinates
from a persistent cohomology computation for $k=2$ requires a bit more work, as the
barcode decomposition is not valid for
integer coefficients.
Let $p$ be a prime and consider the short exact sequence
of abelian groups
\[
\begin{tikzcd}[column sep = scriptsize]
0 \arrow{r} &\Z \arrow{r}{\times p} &\Z \arrow{r}& \Z/p \arrow{r} & 0
\end{tikzcd}
\]
The induced homomorphism
\[
H^2\left(\mathcal{\check{C}}_\alpha(\sigma_g, \epsilon);\Z\right) \longrightarrow H^2\left(\mathcal{\check{C}}_\alpha(\sigma_g, \epsilon);\Z/p\right)
\]
 will be an epimorphism whenever
$H^3\left(\mathcal{\check{C}}_\alpha(\sigma_g, \epsilon);\Z\right)$ has no $p$-torsion.
The universal coefficient theorem implies the following
\begin{proposition}
Let  $p$ be a prime
not dividing the order of the torsion subgroup of $H_2(\mathcal{\check{C}}_\alpha(\sigma_g, \epsilon);\Z)$.
Then the homomorphism
\[
H^2(\mathcal{\check{C}}_\alpha(\sigma_g, \epsilon);\Z) \longrightarrow H^2(\mathcal{\check{C}}_\alpha(\sigma_g, \epsilon);\Z/p)
\]
is surjective.
\end{proposition}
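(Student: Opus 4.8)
The plan is to deduce the statement from the Bockstein long exact sequence attached to the coefficient sequence $0 \to \Z \xrightarrow{\times p} \Z \to \Z/p \to 0$, combined with the universal coefficient theorem. Write $K = \check{\mathcal{C}}_\alpha(\sigma_g,\epsilon)$. Since $X$ has $n+1$ points, $K$ has at most $n+1$ vertices and is therefore a finite simplicial complex, so all of its integral homology and cohomology groups are finitely generated abelian groups; this finiteness is what makes the argument go through.

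First I would apply $\mathrm{Hom}\big(C_\bullet(K;\Z),-\big)$ --- an exact functor on the degreewise free simplicial chain complex $C_\bullet(K;\Z)$ --- to the short exact coefficient sequence, obtaining a short exact sequence of cochain complexes and hence a long exact sequence in cohomology containing the segment
\[
H^2(K;\Z) \longrightarrow H^2(K;\Z/p) \xrightarrow{\ \beta\ } H^3(K;\Z) \xrightarrow{\ \times p\ } H^3(K;\Z).
\]
By exactness, the image of $H^2(K;\Z) \to H^2(K;\Z/p)$ is the kernel of the Bockstein $\beta$, so the map is surjective if and only if $\beta = 0$; and, again by exactness, $\beta = 0$ if and only if multiplication by $p$ on $H^3(K;\Z)$ is injective, i.e. if and only if $H^3(K;\Z)$ has no element of order $p$. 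Because $H^3(K;\Z)$ is finitely generated, its torsion subgroup is a finite abelian group, which contains an element of order $p$ exactly when $p$ divides its order; hence surjectivity is equivalent to $p \nmid \big|\mathrm{Tors}\, H^3(K;\Z)\big|$.

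It then remains to identify $\mathrm{Tors}\, H^3(K;\Z)$. Here I would invoke the universal coefficient theorem for cohomology, which furnishes a (split) short exact sequence
\[
0 \longrightarrow \mathrm{Ext}^1_{\Z}\!\big(H_2(K;\Z),\Z\big) \longrightarrow H^3(K;\Z) \longrightarrow \mathrm{Hom}\!\big(H_3(K;\Z),\Z\big) \longrightarrow 0 .
\]
Since $H_3(K;\Z)$ is finitely generated, $\mathrm{Hom}\big(H_3(K;\Z),\Z\big)$ is free abelian, hence torsion-free; since $H_2(K;\Z)$ is finitely generated, $\mathrm{Ext}^1_{\Z}\big(H_2(K;\Z),\Z\big)$ is isomorphic to the torsion subgroup of $H_2(K;\Z)$. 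Therefore $\mathrm{Tors}\, H^3(K;\Z) \cong \mathrm{Tors}\, H_2(K;\Z)$, and in particular these groups have the same order. Combining this with the previous paragraph, $H^2(K;\Z) \to H^2(K;\Z/p)$ is surjective whenever $p$ does not divide the order of the torsion subgroup of $H_2(K;\Z)$, which is exactly the hypothesis.

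I do not expect a genuine obstacle: the argument is formal once one records that $K$ is a finite complex. The only points needing a little care are the exactness used to pass from the coefficient sequence to the cohomology long exact sequence (which rests on $C_\bullet(K;\Z)$ being degreewise free) and the standard isomorphism $\mathrm{Ext}^1_{\Z}(A,\Z)\cong \mathrm{Tors}(A)$ for finitely generated $A$.
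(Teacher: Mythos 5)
Your argument is correct and is precisely the one the paper intends: the discussion immediately preceding the proposition derives surjectivity from the Bockstein long exact sequence of $0 \to \Z \xrightarrow{\times p} \Z \to \Z/p \to 0$ (reducing the question to the absence of $p$-torsion in $H^3$) and then invokes the universal coefficient theorem to identify $\mathrm{Tors}\,H^3(K;\Z)$ with $\mathrm{Tors}\,H_2(K;\Z)$. You have simply written out the details (finiteness of $K$, exactness of $\mathrm{Hom}(C_\bullet(K;\Z),-)$ on the free chain complex, and $\mathrm{Ext}^1_\Z(A,\Z)\cong\mathrm{Tors}(A)$ for finitely generated $A$) that the paper leaves implicit.
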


Now one can follow the strategy in
\cite[Sections 2.4 and 2.5]{de2011persistent}
for choosing $p$, lifting to integer coefficients
and constructing the harmonic representative.
The solution to the harmonic representative problem
is plugged into equation (\ref{eq:classMapFormulaHarmonic}).

\begin{definition}
The sparse Rips filtration, with sparsity parameter $0<\epsilon<1$, induced
by the greedy permutation $\sigma_g : \underline{n} \longrightarrow X$ is the filtered simplicial complex
\[
\mathcal{R}(\sigma_g, \epsilon)
 =
\big\{
R_\alpha(\sigma_g, \epsilon)\;: \;\alpha \geq  0
\big\}
\]
where
\[
R_\alpha(\sigma_g,\epsilon) =
\big\{
\{s_0,\ldots, s_k\} \subset \underline{n} \;:\;
U_{s_r}^\alpha \cap U_{s_t}^\alpha \neq \emptyset \mbox{ for all }
0\leq r,t \leq k
\big\}
\]
\end{definition}

\begin{remark}
It follows  that
for  all $\beta \geq 0$
\[
\check{C}_\beta(\sigma_g , \epsilon)
\subset
R_\beta(\sigma_g, \epsilon)
\]
and if $\alpha$ is small enough (as the cones $U_s^\alpha$ stop growing)  we also get the inclusion
$R_\alpha(\sigma_g, \epsilon)
\subset
\check{C}_{2\alpha}(\sigma_g,\epsilon).$
Then for each abelian group $G$ and integer $k\geq 0$ we get a commutative diagram
\[
\begin{tikzcd}
H^k\left(R_{\beta}(\sigma_g , \epsilon); G\right)
\arrow{r}
\arrow{d}&
H^k\left(R_{\alpha/4}(\sigma_g, \epsilon); G\right)  \\
H^k\left(\check{C}_{\beta}(\sigma_g , \epsilon); G\right)
\arrow{r}&
H^k\left(\check{C}_{\alpha/2}(\sigma_g, \epsilon); G\right)
\arrow{u}
\end{tikzcd}
\]
which shows that cohomology classes in the sparse Rips filtration
with long enough persistence, and small enough death time, yield nontrivial persistent cohomology classes in
the sparse \v{C}ech filtration.
This is useful because the persistent cohomology of the sparse Rips filtration is easier to compute in practice.
\end{remark}

\subsection*{Example:} Let $\XX$ be a uniform random sample with
$2,500$ points from the 2-dimensional torus
$S^1\times S^1 \subset \C^2$, endowed with the metric $\mathbf{d}$
given by
\[
\mathbf{d}\big((z_1,w_1) ,(z_2, w_2)\big)
=
\sqrt{\big|\arccos(\langle z_1,z_2\rangle)\big|^2 + \big|\arccos(\langle w_1,w_2\rangle)\big|^2 }
\]
There are two things we would like to illustrate with this example:
First, that one does not need the entire data set $\XX$ to compute
appropriate classifying maps $f^\alpha_\tau$, in fact a small subsample suffices;
and second, that one  can use the sparse Rips filtration instead of
the \v{C}ech filtration, which simplifies computations.
Indeed, let $n= 34$ and let \[X = \{x_0,\ldots, x_n\}\subset \XX\] be obtained through \verb"maxmin" sampling.
Notice that $X$ is  $1.4\%$ of the total size of $\XX$
and that  $\sigma_g: \underline{n} \longrightarrow X$ given by
$\sigma_g(s) = x_s$ is a greedy permutation on $X$.
We let $\epsilon = 0.01$ since the sample is already sparse.
Computing the 1-dimensional persistent cohomology with coefficients in
$\Z/2$ for the sparse Rips filtration $\mathcal{R}(\sigma_g, \epsilon)$,  yields the barcode shown in Figure \ref{fig:example_T_sparse}(\textbf{left}).

\begin{figure}[!ht]
  \centering
  \includegraphics[width=\textwidth]{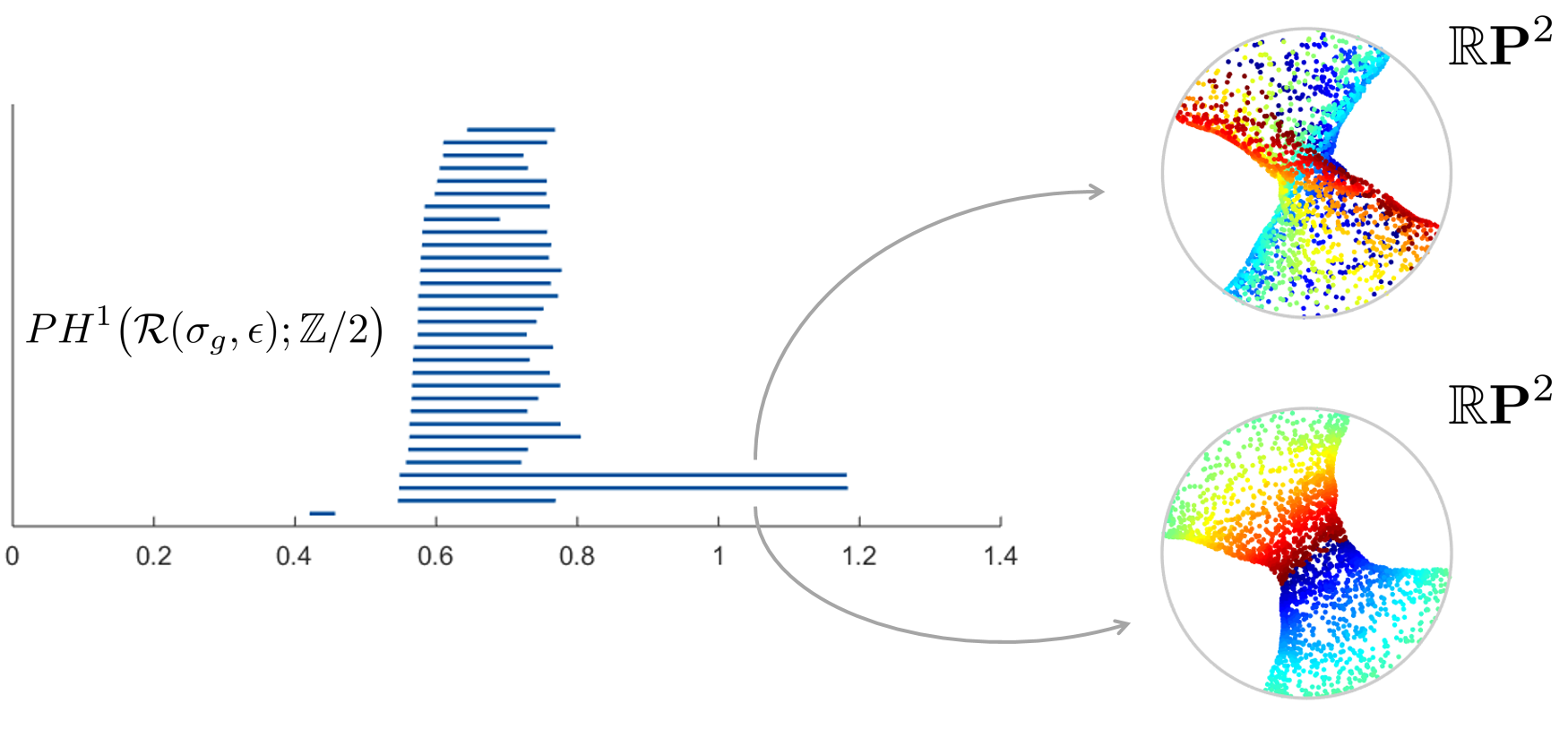}
  \caption{$\RP^2$ coordinates for $\XX\subset S^1 \times S^1$,
from the 1-dimensional $\Z/2$-persistent cohomology of a sparse rips filtration.
\textbf{Left:} Computed barcode, \textbf{Right:} resulting $\RP^2$ coordinates induced by classes with large persistence. In both cases, the $\RP^2$ coordinates of a point $(z,w) \in \XX$
are colored according to $\arg(z)\in [0,2\pi)$.
Please refer to an electronic version for colors.
}\label{fig:example_T_sparse}
\end{figure}

For this calculation we first determine the birth-times of the edges
as in \cite[Algorithm 3]{cavanna2015geometric}, and input them as a distance matrix into \verb"Dionysus"'
persistent cohomology algorithm \cite{dionysus2012}.
After selecting the two classes with the longest persistence,
\verb"Dionysus" outputs cocycle representatives $\mu_1$ and $\mu_2$
at cohomological birth $\alpha_1, \alpha_2 \approx 1.18$.
Now, using the fact that
$  \check{C}_\alpha(\sigma_g, \epsilon) \subset R_\alpha(\sigma_g, \epsilon)$
for all $\alpha \geq 0$,
we have that the induced homomorphism
\[C^1 (R_\alpha(\sigma_g, \epsilon) ; \Z/2) \longrightarrow
C^1(\check{C}_\alpha(\sigma_g, \epsilon);\Z/2)
\]
sends $\mu_1$ and $\mu_2$ to $\tau_1$ and $\tau_2$, respectively.
Moreover, since $R_\alpha(\sigma_g, \epsilon)$ is connected
at $\alpha = \min\{\alpha_1,\alpha_2\}$ it follows that
$\XX \subset \bigcup \mathcal{B}^{\alpha}$,
and using the formula from Theorem \ref{thm:ClassifyingFormulaSparse}
we get the point clouds
$f_{\tau_1}^{\alpha}(\XX), f_{\tau_2}^{\alpha}(\XX) \subset \RP^{34}$.
The result of computing their $\RP^2$ coordinates via principal
projective components is shown in Figure \ref{fig:example_T_sparse}(\textbf{right}).

\section{Discussion}
We have shown in this paper how 1-dimensional (resp. 2-dimensional)
persistent cohomology classes with $\Z/2$ coefficients
(resp. $\Z/p$ coefficients for appropriate primes $p$)
can be used to produce multiscale projective coordinates for data.
The main ingredients were: interpreting a given cohomology class
as the characteristic class corresponding to a unique isomorphism
type of line bundle, and constructing explicit classifying maps
from \v{C}ech cocycle representatives.
In addition, we develop a dimensionality reduction step in projective
space in order to lower the target dimension of the original classifying map.

Some questions/directions suggested by the current approach are the following: The case $H^3(B;\Z)$ has a similar flavor to the bundle
perspective presented here, and can perhaps be addressed using gerbes
\cite{hitchin2003communications}.
On the other hand, since Principal Projective Components   is essentially a global fitting procedure, it would be valuable to investigate what local nonlinear dimensionality reduction techniques can be adapted to projective space.

\subsection*{Acknowledgements}
The author  would like to thank Nils Baas, Ulrich Bauer, John Harer, Dmitriy Morozov and Don Sheehy for extremely helpful conversations regarding the contents of this paper.
The detailed comments of the anonymous reviewers were invaluable in fixing multiple imprecisions found in the original draft;  thank you for the high-quality feedback.

\bibliography{references}

\end{document}